\newtheorem{theorem}{Theorem}[section]
\newtheorem{lemma}[theorem]{Lemma}
\newtheorem{corollary}[theorem]{Corollary}
\newtheorem{proposition}[theorem]{Proposition}
\theoremstyle{definition}
\newtheorem{definition}[]{Definition}
\newtheorem{example}[theorem]{Example}
\theoremstyle{remark}
\newtheorem{remark}[theorem]{Remark}
\numberwithin{equation}{section}
\begin{document}
    \title{On gradient $\rho$-Einstein solitons with Bach tensor radially nonnegative}
 
           \author[M. Andrade]{Maria Andrade}
\address[M. Andrade]{Departamento de Matemática, Universidade Federal de Sergipe, S\~ao Cristov\~ao-SE, Brazil
	\newline\indent 
}
%%%\email{\href{mailto: maria@mat.ufs.br}{ maria@mat.ufs.br}}
\email{{ maria@mat.ufs.br}}
       
       \author[V. Borges]{Valter Borges}
\address[V. Borges]{
	Faculdade de Matemática, Universidade Federal do Pará, Bel\'em-PA, Brazil
	\newline\indent 
}

%%%\email{\href{mailto: valterborges@ufpa.br}{ valterborges@ufpa.br}}
\email{{ valterborges@ufpa.br}}

 \author[H. Reis]{Hiuri Reis}
\address[H. Reis]{
	Instituto de Matemática e Estatística, Universidade Federal de Goiás, Goi\^ania-GO, Brazil
	\newline\indent 
}

%%%\email{\href{mailto: hiuri_reis@ufg.br}{hiuri\textunderscore reis@ufg.br}}
\email{{hiuri\textunderscore reis@ufg.br}}

	\subjclass[2000]{Primary 35Q51, 53B20, 53C20, 53C25; Secondary 34A40}
	
	\date{\today}
	
	\keywords{$\rho$-Einstein solitons, Bach tensor, Bach flat, Locally conformally flat, Rigidity}
\begin{abstract}
In this paper, we study $n$-dimensional gradient $\rho$-Einstein solitons whose Bach tensor is radially nonnegative. Under this assumption, we show that such $\rho$-Einstein solitons are locally warped products of an interval and an Einstein manifold, provided either $\rho\neq0$ or $\rho=0$ and the soliton is rectifiable. We obtain as a consequence that these solitons must have harmonic Weyl tensor and vanishing Bach tensor. We also finish the classification of complete locally conformally flat steady $\rho$-Einstein solitons and classify these manifolds when their Bach tensor is radially nonnegative and $n\in\{3,4\}$.

\end{abstract}

\maketitle
\section{Introduction and main results}
In 1982, Hamilton \cite{hamilton0} introduced the Ricci flow $\partial_tg=-2Ric,$ where $Ric$ is the Ricci curvature of a Riemannian manifold $(M^n,g)\ n\geq 3$, which has been studied intensively in recent years and plays a key role in Perelman's proof of the Poincaré Conjecture. An essential step in implementing the Ricci flow is the study of Ricci solitons, which generate self-similar solutions to the flow and often arise as singularity models. In particular, gradient Ricci solitons are Riemannian manifolds satisfying $Ric+\nabla^2f=\lambda g,$ where $f\in C^{\infty}(M),$ $\nabla^2f$ is the Hessian of $f$ and $\lambda\in\mathbb{R}$ is a constant.
We can consider on a Riemannian manifold $(M^n,g), n\geq 3,$ geometric flows of the type 
$$\partial_tg=-2(Ric-\rho Rg),$$
for some $\rho \in\mathbb{R}$, which were first studied by Bourguignon and are called Ricci-Bourguignon flows. Motivated by these flows, Catino and Mazzieri \cite{catino} introduced the gradient $\rho$-Eisntein solitons. In this article, we are concerned with the following definition.
%Thus, we bring the following definition.
\begin{definition}
Let $(M^n,g),$ $n\geq 3,$ be a Riemannian manifold and let $\rho\in\mathbb{R}.$ We say that $(M^n,g)$ is a gradient $\rho$-Einstein soliton if there exists a smooth function $f:M^n\to\mathbb{R}$ such that the metric $g$ satisfies the equation
\begin{eqnarray}\label{fundeq}
Ric+\nabla^2f=\left(\rho R+\lambda\right)g,
\end{eqnarray}
for some constant $\lambda\in\mathbb{R},$ where $f$ is called its potential function.
\end{definition}
The function $f$ is called a \textit{potential function} of the $\rho$-Eisntein soliton. The soliton is called {\it shrinking}, {\it steady} or {\it expanding}, provided $\lambda$ is positive, zero or negative, respectively. In this case we use the notation $(M^n,g,f,\lambda)$.
Some special examples are: for $\rho=1/2$ we have the gradient Einstein soliton, for $\rho=1/n$ gradient traceless Ricci soliton and for $\rho=1/2(n-1)$ gradient Schouten soliton.
Different values of $\rho$ give rise to quite different Riemannian manifolds. For instance, if $\rho\neq0$ then the potential function is rectifiable, which means that $\vert\nabla f\vert$ is constant along every regular connected component of the level sets of $f$. In this case we simply say the corresponding $\rho$-Einstein soliton is rectifiable. For more details see \cite{catino,petersen2}. Also, when $\rho\in\{1/2,1/2(n-1),1/n\}$ and the corresponding $\rho$-Einstein soliton is compact, then its potential function is constant. Another difference is that, while it is known that for $\rho\notin\{1/2(n-1),1/n\}$ the corresponding solitons are analytic, nothing is known in the remaining cases. For the proofs see \cite{catino1,ivey}. Interesting phenomena also occurs if $\rho=1/2(n-1)$. It was proved in \cite{catino} that the complete gradient Schouten solitons are Ricci flat if $\lambda=0$ and in \cite{borges} that $0\leq\lambda R\leq2\lambda^2$ if $\lambda\neq0$. Moreover, we recall that $\rho$-Einstein solitons characterize self-similar solutions of certain geometric evolution equations \cite{catino1}.

The simplest examples of $\rho$-Einstein solitons are the Einstein manifolds. Other simple examples are the Gaussian solitons and the rigid solitons. Consult Subsection \ref{subsecExamp} below for these and other examples, and also for the definition of rigidity.

In this article, we investigate the Bach tensor $B$ of a gradient $\rho$-Einstein soliton of dimension $n\geq3$. See Subsection \ref{weyl} for the definition of $B$. This symmetric and trace free $2$-tensor is named after R. Bach \cite{bach}, who introduced it in 1921 to study conformally invariant gravitational theory \cite{fielder}, which is modeled on a four manifold. When $n=4$, the Bach tensor is divergence free, conformally invariant, and its vanishment is a necessary condition for a four manifold to be conformal to an Einstein space \cite[page 18]{berg}. The latter condition turns out to be an equivalence if the manifold is also Kahler \cite[Remark 4]{derd}, or when it is a compact Einstein-Weyl manifold \cite[Corollary 3.2]{swann}. It is also known that compact self-dual Riemannian manifolds have zero Bach tensor \cite[Section 4]{derd}. On top of that, being locally conformally flat or Einstein implies the vanishment of the Bach tensor for any dimension $n\geq3$.
 
Complete gradient Ricci solitons with zero Bach tensor were investigated recently. When $\lambda>0$ and $n\geq4$, Cao and Chen showed in \cite{caoqian} that the vanishment of $B$ implies the harmonicity of the Weyl tensor, allowing them to use the results of \cite{ferlo_0} and \cite{muse} to classify these solitons. This classification says the solitons with $B$ vanishing are quotients of either the Gaussian soliton or the cylinder $\mathbb{R}\times\mathbb{S}^{n-1}$. When $\lambda\geq0$ and assuming curvature bounds, Cao, Catino, Chen, Mantegazza and Mazzieri showed in \cite{caocatino} that the vanishing of $B$ implies rotational symmetry. Thus, if $\lambda=0$, the soliton is homothetic to the Bryant soliton or isometric to the Gaussian soliton. When $\lambda>0$, however, there are examples in \cite[Section 5]{caocatino} showing a more refined classification is not possible.

The strategy used in the classification of \cite{caoqian,caocatino} was to consider a tensor $D$, introduced by Cao and Chen in \cite{caoqian_0}, whose norm is tied to the geometry of the regular level sets of the potential function, and whose vanishment is a powerful tool to analyze the geometry of Ricci solitons in several settings. The proofs in \cite{caoqian,caocatino} use the key integral identity
\begin{align}\label{identity}
    2\int_{M}B(\nabla f,\nabla f)dV=-\int_{M}\vert D\vert^2dV,
\end{align}
where $dV$ is the Riemannian measure $dM$ if $\lambda>0$, and given by $e^{-f}dM$, otherwise. As a consequence, they prove that $B=0$ implies $D=0$. Notice that \eqref{identity} shows actually that $D$ vanishes if $B$ is radially nonnegative, that is, if $B(\nabla f,\nabla f)\geq0$ on $M$.

Bach-flat complete gradient non-steady Schouten solitons were classified in \cite{borges2} by the second author, using a strategy quite different than that used in \cite{caoqian,caocatino}. The key point in this classification is to notice that $\nabla f$ is an eigenvector of $B$ at the regular points of $f$, with eigenvalue
\begin{align}\label{identschb=0}
	\mu=R^2-(n-1)\vert Ric\vert^2.
\end{align}
Thus, the vanishment of the Bach tensor implies $\mu=0$, and the latter condition was shown to be equivalent to $b(s)=\vert\nabla f(\alpha(s))\vert^2$ satisfying an ODE, where $\alpha$ is a suitable curve. This ODE is used to prove that the soliton has constant scalar curvature, allowing to use the results of \cite{ferlo} to obtain the classification. It is worthy mentioning that once $\nabla f$ is in the kernel of $Ric$ (c.f. \eqref{Riccidirec}), we must have $\mu\leq0$. Thus, the results of \cite{borges2} are true assuming $B$ radially nonnegative.

In this paper, we investigate gradient $\rho$-Einstein solitons with Bach tensor radially nonnegative, that is, with $B(\nabla f,\nabla f)\geq0$ on $M$. Our main result, stated below, describes the local geometry of these manifolds.

\begin{theorem}[Main Theorem]\label{main_thm}
	Let $(M^n,g,f,\lambda)$ be a nontrivial gradient $\rho$-Einstein soliton, and $\mathcal{R}\subset M$ be the set of regular values of $f$. If $\rho=0$, assume in addition that the soliton is rectifiable. If $B(\nabla f,\nabla f)\geq0$ on $M$, then, for any $p\in\mathcal{R}$ there is an open connected neighborhood $U\subset\mathcal{R}$ of $p$, an Einstein manifold $N^{n-1}$ and a positive smooth function $h:(-\varepsilon,\varepsilon)\to\mathbb{R}$ so that $(U,\left.g\right\vert_{U})$ is isometric to $(-\varepsilon,\varepsilon)\times_{h}N^{n-1}$.
\end{theorem}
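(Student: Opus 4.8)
The plan is to derive the pointwise structure forced by the radial nonnegativity of $B$ and then to recognize it as a warped product. First I would extract the basic identities of a $\rho$-Einstein soliton from \eqref{fundeq}. Tracing gives $\Delta f=(n\rho-1)R+n\lambda$, and taking the divergence of \eqref{fundeq} together with the contracted second Bianchi identity $\operatorname{div}Ric=\tfrac12\nabla R$ yields $Ric(\nabla f)=\big(\tfrac12-(n-1)\rho\big)\nabla R$. Contracting \eqref{fundeq} with $\nabla f$ and using $\nabla^2 f(\nabla f,\cdot)=\tfrac12\,d(|\nabla f|^2)$, together with the fact that $|\nabla f|$ is constant along regular level sets (rectifiability, automatic for $\rho\neq0$ and assumed for $\rho=0$), I would conclude that $\nabla R$ is proportional to $\nabla f$, and hence that $\nabla f$ is an eigenvector of $Ric$ at every point of $\mathcal{R}$. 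This eigendirection property is the structural input for everything that follows.

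Next I would introduce the three-tensor $D$ modeled on the one of Cao--Chen, assembled from $Ric$, $R$ and $\nabla f$, whose vanishing encodes the warped product structure of the regular set, and record its algebraic relation to the Cotton tensor $C$ and to the contraction of the Weyl tensor $W$ with $\nabla f$. Combining this with the expression of $B$ through $\operatorname{div}C$ and the $Ric$-contraction of $W$, I would evaluate $B(\nabla f,\nabla f)$ at a regular point. The eigendirection property from the previous step is precisely what forces the mixed Weyl contributions to drop out, leaving a pointwise identity of the form $B(\nabla f,\nabla f)=-c\,|D|^2$ with $c=c(n)>0$; the sign is dictated by the fact that the right-hand side measures, up to a positive factor, the failure of $Ric$ restricted to the level sets to be a multiple of the induced metric (in the Schouten case this is exactly the inequality $R^2-(n-1)|Ric|^2\le0$). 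Since the hypothesis gives $B(\nabla f,\nabla f)\ge0$, I would conclude $D\equiv0$ on $\mathcal{R}$. I expect this to be the principal obstacle: the theorem is local and assumes no completeness, so the global integral identity \eqref{identity} is unavailable, and the Weyl cancellation for a general value of $\rho$ must be carried out \emph{pointwise} and with the correct sign.

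Finally, I would translate $D=0$ into geometry. On a connected neighborhood $U\subset\mathcal{R}$ of $p$ the level sets of $f$ are regular hypersurfaces, and $D=0$ together with $\nabla f$ being a $Ric$-eigenvector implies that the second fundamental form of these hypersurfaces is a multiple of the induced metric (total umbilicity) and that the induced metric is Einstein. Using the normalized gradient flow of $\nabla f/|\nabla f|$ to define a coordinate $r\in(-\varepsilon,\varepsilon)$ transverse to the level sets and taking $N^{n-1}$ to be the level set through $p$, these facts let me write $\left.g\right\vert_{U}=dr^2+h(r)^2 g_N$ for a positive smooth function $h$, exhibiting the asserted isometry between $(U,\left.g\right\vert_{U})$ and $(-\varepsilon,\varepsilon)\times_{h}N^{n-1}$.
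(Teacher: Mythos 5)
Your plan follows the same local skeleton as the paper's proof---$Ric$-eigenvector property of $\nabla f$, a pointwise sign for $B(\nabla f,\nabla f)$, the equality case, then the warped product---but your middle step is packaged through the Cao--Chen tensor $D$, which the paper deliberately avoids. The paper instead computes $B(\nabla f,\cdot)$ directly from the soliton's Cotton tensor (Lemma \ref{lemma1} and its sequels), showing $\nabla f$ is an eigenvector of $B$ with eigenvalue proportional to $\mu=n\xi_{1}^{2}-2R\xi_{1}-((n-1)\vert Ric\vert^{2}-R^{2})$ (Proposition \ref{Propgradeigv}), proves $\mu\leq0$ by Cauchy--Schwarz applied to the tangential Ricci eigenvalues, reads off $\xi_{2}=\cdots=\xi_{n}$ from the equality case (Theorem \ref{proptwoeigen}), and then integrates $\partial_{t}g_{ab}=2u(t)g_{ab}$ to produce the warped product (Proposition \ref{local_warp}). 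Since under $df\wedge dR=0$ the norm $\vert D\vert^{2}$ reduces to the umbilicity defect of the level sets, i.e.\ to the traceless tangential Ricci, your $-c\vert D\vert^{2}$ and the paper's $\mu$ are the same object in different clothes; the trade-off is that the paper's version is self-contained linear algebra, whereas yours imports the Cao--Chen machinery and must first rebuild it for $\rho\neq0$: by \eqref{cotton_rho} both the defining identity $D_{ijk}=C_{ijk}+W_{ijkl}\nabla^{l}f$ and the norm formula for $\vert D\vert^{2}$ acquire extra $\nabla R$-terms.

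The genuine gap is your central step: the pointwise identity $B(\nabla f,\nabla f)=-c\,\vert D\vert^{2}$ is asserted, not derived, and it is the entire content of the theorem. You correctly flag that \eqref{identity} is unavailable without completeness, but you do not say what replaces the integration by parts; for a general (non-rectifiable) Ricci soliton no pointwise version holds, because $B(\nabla f,\nabla f)$ differs from $-\tfrac12\vert D\vert^{2}$ by a genuine divergence term. The missing observation that makes your program viable is this: when $df\wedge dR=0$, the one-form $\omega_{k}=C_{kij}\nabla^{i}f\nabla^{j}f$ vanishes identically on $\mathcal{R}$ (in \eqref{cotton_rho} the scalar-curvature terms are killed by $\nabla R\parallel\nabla f$ and the curvature term by the symmetries of $Rm$), hence $\nabla^{k}\omega_{k}\equiv0$, so the divergence term disappears pointwise and one is left with $\nabla^{k}C_{kij}\nabla^{i}f\nabla^{j}f=C_{kij}R^{kj}\nabla^{i}f$, which together with the Weyl--Ricci contraction is purely algebraic in $Ric$, $R$, $\nabla f$. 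Showing that this algebraic expression equals $-c(n)\vert D\vert^{2}$ with $c(n)>0$ for every $\rho$ is exactly the computation carried out in Section \ref{sectionbachT} and Theorem \ref{proptwoeigen}; until you do it, your argument is a plan rather than a proof. Two smaller points: in the Schouten case $\rho=1/2(n-1)$ your contraction argument yields $Ric(\nabla f,\cdot)=0$ but not $\nabla R\parallel\nabla f$, which you need for the cancellations above; that parallelism comes from \eqref{wedgezero}. Your final step, translating $D=0$ into umbilicity, Einstein fibers and the warped product, is sound and essentially coincides with Proposition \ref{local_warp}.
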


It is important to remark that our strategy differs from the one used in \cite{caoqian,caocatino}. Namely, to prove the result above we do not need to know whether $D$ or $divW$ vanishes identically, circumstances in which such local decompositions would occur. On the contrary, as an immediate consequence of our local decomposition we obtain the following information on the Weyl and Bach tensors.

\begin{corollary}\label{main_cor}
	Let $(M^n,g,f,\lambda)$ be a nontrivial gradient $\rho$-Einstein soliton. If $\rho=0$, assume in addition that the soliton is rectifiable. If $B(\nabla f,\nabla f)\geq0$ on $M$, then the following assertions are true:
    \begin{enumerate}
        \item\label{item1} If $n\geq3$, then $M^{n}$ is Bach flat.
        \item\label{item2} If $n\geq3$, then $M^{n}$ has harmonic Weyl tensor.
        \item\label{item3} If $n\in\{3,4\}$, then $M^{n}$ is locally conformally flat.
    \end{enumerate}
\end{corollary}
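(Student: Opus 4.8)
The plan is to derive Corollary \ref{main_cor} as a direct consequence of the local warped product structure established in Theorem \ref{main_thm}. The three items should all follow from the observation that a warped product $(-\varepsilon,\varepsilon)\times_h N^{n-1}$ over a one-dimensional base with Einstein fiber has very constrained Weyl and Bach tensors. Since the conclusions in the corollary are about tensors defined pointwise by the metric and its derivatives (the Weyl tensor $W$, its divergence $\mathrm{div}W$, and the Bach tensor $B$), and each such tensorial identity is local in nature, it suffices to verify the vanishing statements on the open dense set $\mathcal{R}$ where the warped product description holds, and then invoke continuity (or analyticity, available when $\rho\notin\{1/2(n-1),1/n\}$, and otherwise density of $\mathcal{R}$) to extend them to all of $M^n$.

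The first step is to compute, or cite, the curvature of a warped product of the form $I\times_h N^{n-1}$ with $N$ Einstein. The key structural fact is that such a metric has harmonic Weyl tensor, i.e. $\mathrm{div}W=0$: this is a classical computation showing that the Cotton tensor $C$ (equivalently $\mathrm{div}W$ up to a dimensional constant for $n\geq4$) vanishes for a one-dimensional base warped product precisely when the fiber is Einstein. This immediately gives item \eqref{item2}. For item \eqref{item1}, I would use the general relation between the Bach tensor, the divergence of the Cotton tensor, and the Weyl tensor, schematically
\begin{align}\label{bachplan}
    (n-2)B_{ij}=\nabla^k C_{kij}+W_{ikjl}C^{kl} \quad\text{(up to normalization)},
\end{align}
(valid for $n\geq4$, with the appropriate three-dimensional analogue when $n=3$). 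Since harmonic Weyl tensor means $C\equiv0$, both terms on the right of \eqref{bachplan} vanish, forcing $B=0$ on $\mathcal{R}$, and hence on $M$. This proves item \eqref{item1} and shows that items \eqref{item1} and \eqref{item2} hold in full generality $n\geq3$.

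For item \eqref{item3}, the restriction to $n\in\{3,4\}$ is what lets harmonic Weyl tensor be upgraded to local conformal flatness. When $n=3$ the Weyl tensor vanishes identically, so every three-manifold is automatically locally conformally flat and the content is really the three-dimensional interpretation of the previous items. When $n=4$, I would argue that on the warped product $I\times_h N^3$ the Einstein fiber $N^3$ is, being three-dimensional and Einstein, a space form of constant curvature; combined with the one-dimensional base this forces the full Weyl tensor of the four-manifold to vanish, giving $W=0$ and hence local conformal flatness on $\mathcal{R}$, extended to $M$ as before. The main obstacle I anticipate is bookkeeping rather than conceptual: one must handle the $n=3$ case separately (where $W\equiv0$ and the Cotton tensor plays the role of $\mathrm{div}W$), keep the normalization constants in \eqref{bachplan} consistent across dimensions, and carefully justify the passage from the identities holding on the open set $\mathcal{R}$ to their holding on all of $M$, which is where either analyticity or a density-plus-continuity argument must be invoked depending on the value of $\rho$.
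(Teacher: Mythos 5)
Your items (2) and (3) do follow the paper's route (its Corollary \ref{wey_har}): the local warped product $I\times_{h}N^{n-1}$ with one--dimensional base and Einstein fiber has harmonic Weyl tensor (Besse, Example 16.26(i)), and for $n\in\{3,4\}$ the fiber is a space form, so the warped product is locally conformally flat; the extension from $\mathcal{R}$ to $M$ by density/analyticity is also as in the paper. One correction, though: your assertion that ``every three-manifold is automatically locally conformally flat'' is false. In dimension three local conformal flatness means the \emph{Cotton} tensor vanishes, which is not automatic; what saves the case $n=3$ is that the fiber $N^{2}$ is a constant curvature surface, so the warped product has vanishing Cotton tensor.

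The genuine gap is in item (1). The Bach formula you invoke is misstated: the correct identity, equation \eqref{bhct} of the paper, is $(n-2)B_{ij}=\nabla^{k}C_{kij}+R_{kl}W_{i\ j}^{\ k\ l}$, where the second term contracts the Weyl tensor with the \emph{Ricci} tensor, not with the Cotton tensor. Hence $C\equiv0$ (harmonic Weyl) kills only the first term, and the implication ``harmonic Weyl $\Rightarrow$ Bach flat'' is false in general for $n\geq4$: for instance $\mathbb{S}^{2}(r_{1})\times\mathbb{S}^{2}(r_{2})$ with $r_{1}\neq r_{2}$ has parallel Ricci tensor, hence $C\equiv0$, yet $R_{kl}W_{i\ j}^{\ k\ l}\neq0$, so it is not Bach flat. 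For $n\in\{3,4\}$ your conclusion can be rescued from item (3), since local conformal flatness makes both terms vanish; but for $n\geq5$ your argument has a real hole: one must additionally prove that $R_{kl}W_{i\ j}^{\ k\ l}=0$ for these solitons. This is exactly where the paper does its real work, in Proposition \ref{prop1}: a direct computation using that $Ric$ has only the eigenvalues $\xi_{1},\xi_{2}$ with $\nabla f$ an eigenvector, the vanishing of the Cotton tensor in the form \eqref{eq_1}, and identity \eqref{IdentitySch}, showing every component of $B$ vanishes on $\mathcal{R}$. (An alternative patch: for a warped product over an interval with Einstein fiber all radial components of $W$ vanish, as one sees by conformal invariance of the Weyl tensor after writing $dt^{2}+h^{2}g_{N}=h^{2}(ds^{2}+g_{N})$ and computing for the product metric; then, writing $R^{kl}=\xi_{2}g^{kl}+(\xi_{1}-\xi_{2})T^{k}T^{l}$ with $T=\nabla f/\vert\nabla f\vert$, trace-freeness of $W$ and the vanishing of its radial part give $R_{kl}W_{i\ j}^{\ k\ l}=0$. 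Some such argument must be supplied; harmonic Weyl alone does not suffice.)
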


Notice that we neither assume the soliton is complete nor impose any curvature bounds in Theorem \ref{main_thm}. As a consequence, we are not in position to show an integral identity similar to \eqref{identity}, and rather adopt a local approach. We first find a nice expression for the Bach tensor in the radial direction, which allows us to conclude that $\nabla f$ is an eigenvector of $B$ at the regular points of $f$, and then compute the corresponding eigenvalue $\mu$, extending \eqref{identschb=0} (see Proposition \ref{Propgradeigv} below). As a matter of fact, the proof of \eqref{identschb=0} in \cite{borges2} is based on the fact that $\nabla f$ is in the kernel of $Ric$, which is not true for $\rho\neq1/2(n-1)$ in general. This difficulty is overcame by using the rectifiability of the soliton. We then concentrate on deciphering the condition $\mu\geq0$. We are not able to proceed with an ODE argument, as done in \cite{borges2}, due to technical reasons. However, we are able to extract information from $\mu\geq0$, namely, that there are at most $2$ distinct eigenvalues of $Ric$, and that $M$ is locally a warped product between an interval and an Einstein manifold. See Theorem \ref{proptwoeigen} and Proposition \ref{local_warp} for these results.

We highlight that the classification of locally conformally flat gradient $\rho$-Einstein solitons is still missing in the literature for $\rho\neq0$, with one exception that we now describe. Catino and Mazzieri constructed in \cite{catino} complete rotationally symmetric steady $\rho$-Einstein soliton for $\rho\in(-\infty,1/2(n-1))\cup[1/(n-1),\infty)$, extending Bryant's construction \cite{bryant} in the case where $\rho=0$, and showed that this is the maximal range for $\rho$ in which such solutions can be found. Assuming $\rho\in(-\infty,0)\cup[1/2,\infty)$, they managed to prove that these are the only complete $n$-dimensional locally conformally flat gradient steady $\rho$-Einstein solitons with positive sectional curvature, up to homotheties. In the next result, as an application of our ideas, we optimize this classification by allowing $\rho$ to be in the maximal range (see item \eqref{thegenerali} below).

\begin{theorem}\label{thm_classif}
	Let $(M^n,g,f,0)$, $n\geq3$, be a complete simply connected and locally conformally flat nontrivial gradient steady $\rho$-Einstein soliton. Then the following happens:
    \begin{enumerate}
        \item If $Ric(\nabla f,\nabla f)$ vanishes identically, then $M$ is isometric to $\mathbb{R}\times\mathbb{H}^{n-1}$, $\mathbb{R}^{n}$ or $\mathbb{R}\times\mathbb{S}^{n-1}$.
        \item\label{thegenerali} If $Ric(\nabla f,\nabla f)>0$ somewhere and the sectional curvature is nonnegative, then $\rho\in(-\infty,1/2(n-1))\cup[1/(n-1),\infty)$ and the soliton is isometric to the type Bryant $\rho$-Einstein soliton constructed by Catino et. al \cite{catino}.
    \end{enumerate}
\end{theorem}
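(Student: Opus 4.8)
The plan is to reduce the classification to a rotationally symmetric ansatz, exploiting that local conformal flatness trivializes the Bach tensor. Since $W=0$ forces $B=0$, we have $B(\nabla f,\nabla f)\equiv0\geq0$, so Theorem~\ref{main_thm} and the structural results behind it (Proposition~\ref{Propgradeigv}, Theorem~\ref{proptwoeigen} and Proposition~\ref{local_warp}) apply for \emph{every} value of $\rho$; for $\rho=0$ the soliton is a complete locally conformally flat steady gradient Ricci soliton, which is rotationally symmetric by \cite{caoqian,caocatino}, so the rectifiability hypothesis causes no loss. Consequently, near each regular point $M$ is a warped product $(-\varepsilon,\varepsilon)\times_{h}N^{n-1}$ over an interval, and local conformal flatness upgrades the Einstein fiber $N^{n-1}$ to a space form. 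Writing $g=dr^2+h(r)^2 g_{N}$ with $\nabla f=f'(r)\partial_r$, equation \eqref{fundeq} with $\lambda=0$ splits into its radial and fiber components, and $\nabla f$ is a Ricci eigenvector with eigenvalue $\lambda_1=-(n-1)h''/h$, the $(n-1)$ fiber directions carrying a single eigenvalue $\lambda_2$.

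The first computational ingredient is a radial identity: taking the divergence of \eqref{fundeq} with $\lambda=0$, using the contracted second Bianchi identity and $\Delta f=(n\rho-1)R$, one obtains $Ric(\nabla f)=c\,\nabla R$ with $c=\tfrac12-(n-1)\rho$, whence $Ric(\nabla f,\nabla f)=c\,\langle\nabla R,\nabla f\rangle$. For part (1), $Ric(\nabla f,\nabla f)\equiv0$ gives $\lambda_1\equiv0$ on the regular set, i.e.\ $h''\equiv0$, so $h$ is affine. If $\rho\neq 1/2(n-1)$ then $c\neq0$, hence $R$ is constant; a nonconstant affine $h$ then forces the flat cone, which is $\mathbb{R}^{n}$, while a constant $h$ yields a product $\mathbb{R}\times N^{n-1}$ whose fiber equation in \eqref{fundeq} identifies $N^{n-1}$ with $\mathbb{H}^{n-1}$ or $\mathbb{S}^{n-1}$. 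If $\rho=1/2(n-1)$ the soliton is Schouten and steady, hence Ricci flat by \cite{catino}, and local conformal flatness makes it flat, so $M=\mathbb{R}^{n}$. Completeness and simple connectivity then rule out quotients and yield exactly the three global models.

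For part (2), the hypotheses $Ric(\nabla f,\nabla f)>0$ somewhere and nonnegative sectional curvature force $\lambda_1>0$ somewhere, so $h$ is strictly concave at that radius; this excludes the product and hyperbolic models and forces the round fiber $N^{n-1}=\mathbb{S}^{n-1}$. Nonnegative curvature together with completeness produces a single critical point of $f$ and the topology of $\mathbb{R}^{n}$, so the local decomposition globalizes to a rotationally symmetric metric. The system coming from the radial and fiber parts of \eqref{fundeq} is then precisely the ODE governing the rotationally symmetric steady $\rho$-Einstein solitons solved by Catino and Mazzieri in \cite{catino}, and matching solutions identifies $M$ with their Bryant-type soliton; the admissible $\rho$ is the maximal existence range $(-\infty,1/2(n-1))\cup[1/(n-1),\infty)$ determined there.

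The main obstacle is the globalization and the extension of the range of $\rho$. Passing from the local warped product of Proposition~\ref{local_warp} to a single rotationally symmetric model on $\mathbb{R}^{n}$ requires controlling the behaviour of $h$ and $f$ at the critical point of $f$ and ensuring smoothness at the ``origin''; this is exactly the point where \cite{catino} needed the restriction $\rho\in(-\infty,0)\cup[1/2,\infty)$. Our improvement is to obtain the warped-product structure and the two-eigenvalue property for all $\rho$ directly from $B(\nabla f,\nabla f)\equiv0$ via Proposition~\ref{Propgradeigv} and Theorem~\ref{proptwoeigen}, bypassing the curvature-sign arguments that constrained the range, so that the uniqueness of the complete solution can be read off over the whole maximal interval.
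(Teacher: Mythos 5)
Your overall route coincides with the paper's: local conformal flatness forces $B=0$, hence $B(\nabla f,\nabla f)\geq 0$, so Theorem \ref{main_thm} (with the rectifiability issue for $\rho=0$ disposed of exactly as in Remark \ref{remarkRectf_rho0}, via \cite{caoqian,caocatino}) gives the local warped product; local conformal flatness then upgrades the Einstein fiber to a space form, and the endgame is Catino--Mazzieri's uniqueness results. Your part (2) is precisely an appeal to Theorem \ref{rotat_=0}, and your by-hand treatment of part (1) (affine $h$, constant $R$, cone/product dichotomy) is a legitimate substitute for the paper's citation of Theorem \ref{rotat_=04}.

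The genuine gap is the globalization. Theorems \ref{rotat_=0} and \ref{rotat_=04} apply to solitons that are warped products with canonical fiber on all of $M\setminus\Lambda$ as in \eqref{canon_fiber}, not merely near each regular point, so one must first pass from the local decomposition of Proposition \ref{local_warp} to a single global warped product. The paper does this (in Theorem \ref{theorem_rotationally}, reused verbatim in the proof of Theorem \ref{thm_classif}) by the signed-distance/parallel-hypersurface argument of Fern\'andez-L\'opez and Garc\'ia-R\'io \cite{fer_gar}, which needs only simple connectedness (so a regular level set is an embedded two-sided hypersurface) and the density of regular points (Corollary \ref{densedense}), and which works for all three fiber geometries without any information on the critical set of $f$. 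You replace this with two unproved assertions: in part (1), that ``completeness and simple connectivity rule out quotients and yield the three global models'', and in part (2), that ``nonnegative curvature together with completeness produces a single critical point of $f$ and the topology of $\mathbb{R}^n$''. The second claim cannot serve as an input: among the very models in play, the steady Gaussian soliton has no critical points, the round and hyperbolic cylinders have an entire hypersurface of critical points, and only the Bryant-type soliton has exactly one --- so the structure of the critical set is an \emph{output} of the classification, not something available beforehand. Nor can you fall back on analytic continuation to spread the local structure, since analyticity is unknown for $\rho\in\{1/n,\,1/2(n-1)\}$. Once the \cite{fer_gar}-style patching argument is supplied, the rest of your proof goes through and matches the paper's.
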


The hypothesis on curvatures in Theorem\ref{thm_classif} cannot be removed. Indeed, Agila and Gomes, in \cite{agila}, constructed examples of non-rigid, complete, three-dimensional steady gradient $(1/3)$-Einstein solitons with negative sectional curvature. These solitons are locally conformally flat but not necessarily rotationally symmetric.

Another application of our results is the following classification in low dimension.

\begin{corollary}\label{cor_W=0}
	Let $(M^n,g,f,0)$ be a complete simply connected and nontrivial gradient steady $\rho$-Einstein soliton with $B(\nabla f,\nabla f)\geq0$ on $M$, and $n\in\{3,4\}$. Then $M$ is locally conformally flat. If in addition $Ric(\nabla f,\nabla f)\geq0$, then it is isometric to one of the solitons given in Theorem \ref{thm_classif}.
\end{corollary}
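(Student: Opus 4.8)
The plan is to read off local conformal flatness from item \eqref{item3} of Corollary \ref{main_cor}, and then to invoke Theorem \ref{thm_classif}; the only substantive step is to upgrade the hypothesis $Ric(\nabla f,\nabla f)\geq0$ to the nonnegativity of the full sectional curvature demanded by the second alternative of that theorem. First I would check that Corollary \ref{main_cor} is applicable. For $\rho\neq0$ the soliton is automatically rectifiable, so there is nothing to do. For $\rho=0$ it is a complete gradient steady Ricci soliton, and I would use the weighted identity \eqref{identity} with measure $e^{-f}dM$: completeness guarantees the integrals converge, and $B(\nabla f,\nabla f)\geq0$ then forces $D\equiv0$, whence $\nabla f$ is an eigenvector of $Ric$ and the soliton is rectifiable. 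In either case item \eqref{item3} of Corollary \ref{main_cor} gives that $M$ is locally conformally flat, which is the first assertion.

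Assume now in addition $Ric(\nabla f,\nabla f)\geq0$ and argue by dichotomy. If $Ric(\nabla f,\nabla f)\equiv0$, the first case of Theorem \ref{thm_classif} applies and $M$ is one of $\mathbb{R}\times\mathbb{H}^{n-1}$, $\mathbb{R}^n$ or $\mathbb{R}\times\mathbb{S}^{n-1}$. Otherwise $Ric(\nabla f,\nabla f)>0$ somewhere. By Theorem \ref{main_thm} the soliton is locally a warped product $(-\varepsilon,\varepsilon)\times_hN^{n-1}$ with $N^{n-1}$ Einstein, and local conformal flatness forces $N^{n-1}$ to have constant curvature; completeness and simple connectedness globalize this to a rotationally symmetric model $g=dt^2+h(t)^2g_N$, exactly the structure underlying Theorem \ref{thm_classif}. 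Since $f$ depends only on $t$, the vector $\nabla f$ is radial and the radial Ricci eigenvalue equals $-(n-1)h''/h$; thus $Ric(\nabla f,\nabla f)\geq0$ is equivalent to $h''\leq0$, i.e. $h$ is concave. In the locally conformally flat case the sectional curvatures of the model are $-h''/h$ on radial-fiber planes and $(k-(h')^2)/h^2$ on fiber planes, where $k$ is the constant curvature of $N^{n-1}$; concavity already makes the first family nonnegative.

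The main obstacle is the sign of the fiber term $(k-(h')^2)/h^2$, which is not controlled pointwise by $Ric(\nabla f,\nabla f)\geq0$. Here I would exploit the global model together with the boundary behaviour at a pole. Since $Ric(\nabla f,\nabla f)>0$ somewhere, $h$ is nonconstant, and because a positive concave function on all of $\mathbb{R}$ must be constant, the model cannot be a warped product over the whole line; it therefore possesses a pole. Smooth closure at a pole forces $N^{n-1}$ to be the round unit sphere, so $k=1$ and $|h'|=1$ there, and the concavity of $h$ then confines $h'$ to $[-1,1]$ along the model, whether it has one pole (the $\mathbb{R}^n$ case) or two. Consequently $(h')^2\leq1=k$ and the fiber curvatures are nonnegative as well, so $M$ has nonnegative sectional curvature. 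Theorem \ref{thm_classif}\,\eqref{thegenerali} now applies and identifies $M$ with the Bryant-type $\rho$-Einstein soliton of \cite{catino}. I expect the delicate points to be the convergence in \eqref{identity} for $\rho=0$ and the globalization of the local warped product, the curvature bookkeeping being routine once the rotationally symmetric model is in place.
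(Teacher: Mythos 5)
Your proof follows the paper's skeleton exactly where the paper is explicit --- item \eqref{item3} of Corollary \ref{main_cor} gives local conformal flatness for $n\in\{3,4\}$, and Theorem \ref{thm_classif} is then invoked --- but you add substance at the two points where the paper's two-sentence proof is loose, so both additions deserve comment. The weak point is your treatment of $\rho=0$: the claim that ``completeness guarantees the integrals converge'' in \eqref{identity} is not justified. For a steady soliton the weight $e^{-f}$ typically grows exponentially (on the Bryant soliton $f$ decreases linearly at infinity, so $e^{-f}dM$ has infinite mass), hence finiteness of $\int_M|D|^2e^{-f}dM$ and of $\int_M B(\nabla f,\nabla f)e^{-f}dM$ is a genuine issue and is exactly where \cite{caoqian,caocatino} have to work; completeness alone does not settle it. You flagged this as delicate yourself, and in fairness the paper shares the same gap: its own proof cites Corollary \ref{main_cor}, which for $\rho=0$ requires rectifiability, and Remark \ref{remarkRectf_rho0} only covers the case $B\equiv0$. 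So on this point you are no worse off than the paper, but the step is asserted, not proved.

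The second addition is correct and genuinely different from anything in the paper. You rightly observe that Theorem \ref{thm_classif}\eqref{thegenerali}, read literally, demands nonnegative sectional curvature, which the corollary does not assume, and you bridge the gap: in the globalized warped product, $Ric(\nabla f,\nabla f)\geq0$ means $h''\leq0$; a positive nonconstant concave function cannot exist on all of $\mathbb{R}$, so there is a pole; smooth closure at the pole forces the unit round sphere fiber and, by concavity, $|h'|\leq1$; hence both families of sectional curvatures $-h''/h$ and $\bigl(1-(h')^2\bigr)/h^2$ are nonnegative. Modulo the standard completion argument (a finite end must close up at a point of $\Lambda$, which rules out Euclidean and hyperbolic fibers), this is sound. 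The more economical route, which is evidently what the authors intend by ``follows from Theorem \ref{thm_classif}'', is to run that theorem's \emph{proof} rather than its statement: under $Ric(\nabla f,\nabla f)\geq0$ the dichotomy ($\equiv0$, or $>0$ somewhere) is exhaustive, and the second alternative feeds directly into Catino--Mazzieri's uniqueness result (Theorem \ref{rotat_=0}), which requires no sectional curvature hypothesis at all. Your route is longer but self-contained, and it explains a posteriori why the curvature hypothesis of Theorem \ref{thm_classif}\eqref{thegenerali} is automatically satisfied here; the paper's route buys brevity at the cost of citing a theorem whose stated hypotheses it has not verified.
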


We would like to point out that identity \eqref{identity} was extended in \cite{patuho} for a complete shrinking $\rho$-Einstein soliton with $\rho\neq1/2(n-1)$, uniformly bounded Ricci curvature and $\rho R\geq0$. As a consequence, it is shown that a Bach flat $\rho$-Einstein soliton satisfying these bounds on the curvature has harmonic Weyl curvature. The approach used is rather inspired by the one developed in \cite{caoqian}, which differs from ours. In particular, our results show that these extra assumptions are not necessary for the conclusion that the soliton has harmonic Weyl tensor when $\rho\neq0$.

In this article we extend identity \eqref{identschb=0} to all values of $\rho\neq0$. When $\rho=0$, we need to assume in addition that the soliton is rectifiable. Even though there are many rectifiable gradient solitons in the literature \cite{bdw,bdw2,dw,dw2}, as observed in \cite{catino}, the product of any two of these solitons is not rectifiable.
\begin{remark}\label{remarkRectf_rho0}
It follows by Hamilton's identity \eqref{hamidenEQ} that the rectifiability of Ricci solitons is equivalent to $df\wedge dR=0$. This last condition is true if we assume $B\equiv0$ and $M$ complete, as shown by the works \cite{caoqian,caocatino}. 
\end{remark}

This paper is organized in the following way: in Section \ref{preliminariesresults} we  fix some notations and review some basic facts about gradient $\rho$-Eisntein soliton. Then, in Section \ref{sectionbachT} we compute the Bach tensor of a $\rho$-Einstein soliton to prove that $\nabla f$ is an eigenvector of this tensor. After, in Section \ref{locstruc} we obtain a local structure of $\rho$-Eisntein soliton under the hypothesis $B(\nabla f,\nabla f)\geq0$. Finally, in the Section \ref{locaconf} we show that complete locally conformally flat $\rho$-Einstein solitons with nonnegative sectional curvature are rotationally symmetric. In the sequence, we classify those for which $\lambda=0$.

\section{Notations, preliminaries and examples}\label{preliminariesresults}

In this section, we review some basic facts, notations, propositions and collect examples that will be useful in this work.

\subsection{Notation}
 Here we consider the following convention for the curvature operator $Rm$, presented in coordinates and considering Einstein's notation for repeated indexes:
\begin{align*}
	Rm(\partial_{i},\partial_{j})\partial_{l}=\nabla_{j}\nabla_{i}\partial_{l}-\nabla_{i}\nabla_{j}\partial_{l}=R_{ijk}^{\ \ \ r}\partial_{r},
\end{align*}
and 
\begin{align*}
	R_{ijkl}=g_{lr}R_{ijk}^{\ \ \ r}.
\end{align*}

With this convention, the Ricci tensor and scalar curvature are given by 
\begin{align*}
	R_{ik}=g^{jl}R_{ijkl}\ \ \ \text{and}\ \ \ R=g^{ik}R_{ik},
\end{align*}
respectively.

Moreover the Riemman's identity reads as
\begin{align}\label{riccident}
	\nabla_{i}\nabla_{j}\nabla_{k}f-\nabla_{j}\nabla_{i}\nabla_{k}f=R_{ijkl}\nabla^{l}f.
\end{align}

We take the opportunity to recall the following identity
\begin{align}\label{divricc}
	(div Rm)_{ijk}=\nabla_{j}R_{ik}-\nabla_{i}R_{jk},
\end{align}
where $div Rm$ is the $3$-tensor defined by
\begin{align}
	(div Rm)_{ijk}=\nabla^{l}R_{ijkl}.
\end{align}

Along the next sections we will also use freely the rules for raising and lowering indexes. According to these rules and considering tensors $A_{ij}$ and $T_{ijkl}$, we have, for example, the equalities $A_{ik}T^{i\ k\ }_{\ j\ l}=A^{i\ }_{\ k}T^{\ \ k}_{ij\ l}=A_{i\ }^{\ k}T^{i}_{\ jkl}$.

\subsection{Preliminary results}

We now collect some well-known identities and results concerning $\rho$-Einstein solitons. 

\begin{proposition}[\cite{catino}]If $(M^n,g,f,\lambda)$ is a gradient $\rho$-Einstein soliton, then
	\begin{align}\label{wedgezero}
		\rho df\wedge dR=0,
	\end{align}
	\begin{align}\label{trace}
		\Delta f=n\lambda-(1-n\rho)R,
	\end{align}
	\begin{align}\label{Riccidirec}
		Ric(\nabla f,X)=\frac{1}{2}(1-2(n-1)\rho)g(\nabla R,X),\ \forall X\in\mathfrak{X}(M),
	\end{align}
	\begin{align}\label{IdentitySch}
		(1-2(n-1)\rho)\Delta R=\left\langle\nabla f,\nabla R\right\rangle-2|Ric|^2+2R\left(\rho R+\lambda\right).
	\end{align}
\end{proposition}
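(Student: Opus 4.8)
The plan is to derive the four identities directly from the fundamental equation \eqref{fundeq}, written in local coordinates as $R_{ij}+\nabla_i\nabla_j f=(\rho R+\lambda)g_{ij}$, using only tracing, the twice-contracted second Bianchi identity $\nabla^iR_{ij}=\tfrac12\nabla_jR$, the commutation rule \eqref{riccident}, and \eqref{divricc}. Identity \eqref{trace} is immediate: contracting \eqref{fundeq} with $g^{ij}$ gives $R+\Delta f=n(\rho R+\lambda)$, which rearranges to $\Delta f=n\lambda-(1-n\rho)R$. For \eqref{Riccidirec} I would take the divergence $\nabla^i$ of \eqref{fundeq}: the Bianchi identity turns $\nabla^iR_{ij}$ into $\tfrac12\nabla_jR$, the Bochner commutation $\nabla^i\nabla_i\nabla_jf=\nabla_j\Delta f+R_{jl}\nabla^lf$ (a consequence of \eqref{riccident}) handles the Hessian term, and the right-hand side becomes $\rho\nabla_jR$. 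Substituting $\nabla_j\Delta f=(n\rho-1)\nabla_jR$ from \eqref{trace} and solving for $R_{jl}\nabla^lf$ produces exactly \eqref{Riccidirec}; I abbreviate $c:=1-2(n-1)\rho$, so that it reads $R_{jl}\nabla^lf=\tfrac12 c\,\nabla_jR$.

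Identity \eqref{IdentitySch} then follows by taking one further divergence, now of \eqref{Riccidirec}. The left-hand side yields $\tfrac12 c\,\Delta R$, while $\nabla^i(R_{ij}\nabla^jf)=(\nabla^iR_{ij})\nabla^jf+R_{ij}\nabla^i\nabla^jf$; the first term equals $\tfrac12\langle\nabla R,\nabla f\rangle$, and substituting $\nabla^i\nabla^jf=(\rho R+\lambda)g^{ij}-R^{ij}$ from \eqref{fundeq} turns the second into $(\rho R+\lambda)R-|Ric|^2$. Multiplying by $2$ gives \eqref{IdentitySch}.

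The delicate point is \eqref{wedgezero}, because the obvious contractions against $\nabla f$ are tautological: from \eqref{fundeq}, \eqref{riccident} and \eqref{divricc} one gets $(div Rm)_{ijk}=\rho(\nabla_jR\,g_{ik}-\nabla_iR\,g_{jk})+R_{ijkl}\nabla^lf$, and contracting with $\nabla^kf$ (the last term dropping by antisymmetry) only reproduces $(div Rm)_{ijk}\nabla^kf=\rho(\nabla_jR\,\nabla_if-\nabla_iR\,\nabla_jf)$, that is $\rho\,df\wedge dR$ in components. To extract genuinely new information I would instead differentiate the already-established relation \eqref{Riccidirec}: applying $\nabla_j$ to $R_{ik}\nabla^kf=\tfrac12 c\,\nabla_iR$ and subtracting the same expression with $i$ and $j$ interchanged, the right-hand side cancels because $\nabla_i\nabla_jR$ is symmetric, and the Hessian terms produced by \eqref{fundeq} cancel because $R_{ik}R_j{}^k$ is symmetric in $i,j$; what survives is $(\nabla_jR_{ik}-\nabla_iR_{jk})\nabla^kf=0$, i.e. $(div Rm)_{ijk}\nabla^kf=0$. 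Comparing with the previous expression forces $\rho\,df\wedge dR=0$, which is \eqref{wedgezero}.

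I expect this last antisymmetrization to be the main obstacle, precisely because the new content comes from differentiating \eqref{Riccidirec} rather than from the soliton equation alone; one must also verify the two cancellations (symmetry of $\nabla^2R$ and of $R_{ik}R_j{}^k$) that isolate the divergence-of-curvature term. The remaining identities are routine traces and divergences.
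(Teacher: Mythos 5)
Your proof is correct: all four identities follow exactly as you describe, and the key cancellations in the last step (symmetry of $\nabla^2 R$ and of $R_{ik}R_{j}{}^{k}$, plus the vanishing of $R_{ijkl}\nabla^k f\nabla^l f$ by antisymmetry) all check out with the paper's curvature conventions. The paper itself states this proposition without proof, citing \cite{catino}; your route --- tracing for \eqref{trace}, divergence of \eqref{fundeq} with the contracted Bianchi identity for \eqref{Riccidirec}, a further divergence for \eqref{IdentitySch}, and the antisymmetrized covariant derivative of \eqref{Riccidirec} to force \eqref{wedgezero} --- is essentially the standard argument of that reference, so there is nothing substantive to compare beyond noting the match.
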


When $\rho\neq0$, identity \eqref{wedgezero} was used in \cite{catino} to prove the following theorem.

\begin{theorem}[Theorem 1.2 of \cite{catino}]
	Every gradient $\rho$-Einstein soliton with $\rho\neq0$ is rectifiable.
\end{theorem}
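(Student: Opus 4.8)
The plan is to exploit identity \eqref{wedgezero} directly. Since $\rho\neq0$ is a nonzero constant, \eqref{wedgezero} immediately yields $df\wedge dR=0$ on all of $M$. At any regular point $p$ of $f$, where $\nabla f(p)\neq0$, this forces $\nabla R$ to be pointwise proportional to $\nabla f$; concretely, there is a scalar $\phi$ (locally the smooth function $\phi=\langle\nabla R,\nabla f\rangle/|\nabla f|^2$) with $\nabla R=\phi\,\nabla f$ on a neighborhood of $p$.

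Next I would compute the gradient of $|\nabla f|^2$ and show it is parallel to $\nabla f$. Differentiating and using the soliton equation \eqref{fundeq} in the form $\nabla^2 f=(\rho R+\lambda)g-Ric$, one gets
$$\tfrac{1}{2}\nabla_i|\nabla f|^2=(\nabla^2 f)_{ij}\nabla^j f=(\rho R+\lambda)\nabla_i f-Ric_{ij}\nabla^j f.$$
Inserting \eqref{Riccidirec}, which gives $Ric_{ij}\nabla^j f=\tfrac{1}{2}(1-2(n-1)\rho)\nabla_i R$, this becomes
$$\nabla|\nabla f|^2=2(\rho R+\lambda)\nabla f-(1-2(n-1)\rho)\nabla R.$$

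Combining the two steps, at a regular point I substitute $\nabla R=\phi\,\nabla f$ to obtain
$$\nabla|\nabla f|^2=\bigl[\,2(\rho R+\lambda)-(1-2(n-1)\rho)\phi\,\bigr]\nabla f,$$
so $\nabla|\nabla f|^2$ is a pointwise multiple of $\nabla f$ on the regular set. Finally, on a regular connected component $\Sigma$ of a level set of $f$, the tangent space at each point is the orthogonal complement of $\nabla f$; hence for every $X$ tangent to $\Sigma$ one has $\langle\nabla|\nabla f|^2,X\rangle=0$, which means $|\nabla f|^2$ is constant along $\Sigma$. This is precisely the definition of rectifiability.

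The computation itself is routine, so the only point requiring care is the last one: translating the statement that $\nabla|\nabla f|^2$ is parallel to $\nabla f$ into the constancy of $|\nabla f|$ on regular level sets. This is the main (mild) obstacle, since it requires working on each regular connected component separately and invoking the regular-point hypothesis $\nabla f\neq0$ to identify the tangent spaces as the orthogonal complement of $\nabla f$.
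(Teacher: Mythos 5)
Your proof is correct and follows essentially the same route as the source this theorem is quoted from: the paper itself gives no proof (it cites Theorem 1.2 of \cite{catino}), but it explicitly notes that identity \eqref{wedgezero} is what was used there, and your argument --- combining \eqref{wedgezero} with the soliton equation \eqref{fundeq} and identity \eqref{Riccidirec} to show that $\nabla\vert\nabla f\vert^2$ is pointwise parallel to $\nabla f$ on the regular set --- is precisely that argument. Nothing is missing; in particular, your final step identifying the tangent spaces of regular connected components of level sets with the orthogonal complement of $\nabla f$, and then using connectedness to conclude constancy of $\vert\nabla f\vert$, is handled correctly.
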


This result is one of the main differences between the cases $\rho=0$ and $\rho\neq0$. When $\rho=0$, rectifiability follows from the vanishment of certain geometric tensors, such as the Weyl or Bach tensor (see \cite{caoqian_0,caoqian,catino}), however, as observed by Catino and Mazzieri in \cite{catino}, there are examples of Ricci solitons which are not rectifiable. We observe that rectifiable Ricci solitons were investigated in \cite{petersen2}.

Quantitatively, the identity \eqref{wedgezero} has the following consequence, which shall be important in the next sections.

\begin{proposition}\label{eigenvRic_stat}
	Let $(M^n,g,f,\lambda)$ be a gradient $\rho$-Einstein soliton. If $\rho=0$, assume in addition that the soliton is rectifiable. Then $\nabla f$ is an eigenvector of $Ric$, whenever it does not vanish. More precisely:
	\begin{align}\label{eigenvRic}
		Ric(\nabla f,X)=\frac{1}{2}(1-2(n-1)\rho)\frac{\left\langle\nabla R,\nabla f\right\rangle}{\vert\nabla f\vert^2} g(\nabla f,X).
	\end{align}
\end{proposition}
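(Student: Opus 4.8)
The plan is to reduce the whole statement to a single geometric fact: that $\nabla R$ is pointwise proportional to $\nabla f$ away from the critical set of $f$, i.e.\ that $df\wedge dR=0$. Granting this, the conclusion will follow by a one-line substitution into \eqref{Riccidirec}. Concretely, at a point where $\nabla f\neq0$ I would write $\nabla R=c\,\nabla f$ and determine the scalar $c$ by pairing with $\nabla f$, obtaining $c=\langle\nabla R,\nabla f\rangle/\vert\nabla f\vert^2$; inserting this into \eqref{Riccidirec} produces exactly \eqref{eigenvRic} and shows that $Ric(\nabla f)$ is a multiple of $\nabla f$, so that $\nabla f$ is an eigenvector of $Ric$.

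The real content is therefore establishing $df\wedge dR=0$, which I would handle in two cases. For $\rho\neq0$ this is immediate from \eqref{wedgezero}: dividing the identity $\rho\,df\wedge dR=0$ by the nonzero constant $\rho$ gives $df\wedge dR=0$ on all of $M$. For $\rho=0$, equation \eqref{fundeq} becomes the gradient Ricci soliton equation $Ric+\nabla^2f=\lambda g$, and here I would invoke the rectifiability hypothesis. I would use Hamilton's identity, namely that $R+\vert\nabla f\vert^2-2\lambda f$ is constant, whose gradient gives $\nabla R=2\lambda\nabla f-\nabla\vert\nabla f\vert^2$. Since a regular connected component of a level set of $f$ has tangent space $(\nabla f)^\perp$, rectifiability means $\vert\nabla f\vert^2$ is constant along it, so $\nabla\vert\nabla f\vert^2$ is parallel to $\nabla f$; by the previous relation the same holds for $\nabla R$, which is precisely $df\wedge dR=0$ at regular points of $f$.

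I expect no serious obstacle: the argument is short and the algebra routine. The only point deserving care is the $\rho=0$ case, where the proportionality of $\nabla R$ and $\nabla f$ is not automatic and must be extracted from rectifiability through Hamilton's identity — this is exactly why the hypothesis is imposed only when $\rho=0$, whereas for $\rho\neq0$ it comes for free from \eqref{wedgezero}.
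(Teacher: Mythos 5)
Your proposal is correct and takes essentially the same approach as the paper: establish $df\wedge dR=0$ (from \eqref{wedgezero} when $\rho\neq0$, from rectifiability when $\rho=0$), then substitute $\nabla R=\frac{\left\langle\nabla R,\nabla f\right\rangle}{\vert\nabla f\vert^2}\nabla f$ into \eqref{Riccidirec}. The only difference is that you make explicit, via Hamilton's identity \eqref{hamidenEQ}, why rectifiability forces $df\wedge dR=0$ in the case $\rho=0$ --- a step the paper's proof states without detail and justifies by the very same argument in Remark \ref{remarkRectf_rho0}.
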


\begin{proof}
	First observe that when $\rho\neq0$, identity \eqref{wedgezero} implies
	\begin{align*}
		\left\langle\nabla R,\nabla f\right\rangle\left\langle\nabla f,X\right\rangle=\vert\nabla f\vert^2\left\langle\nabla R,X\right\rangle,
	\end{align*}
	for any $X\in\mathfrak{X}(M)$. This is equivalent to
	\begin{align}\label{wedgezero2}
		\nabla R=\frac{\left\langle\nabla R,\nabla f\right\rangle}{\vert\nabla f\vert^2}\nabla f.
	\end{align}
	We now use the equality above in \eqref{Riccidirec}, proving \eqref{eigenvRic}. Lastly, notice that when the Ricci soliton is rectifiable, we also have $df\wedge dR=0$, which by using the same argument, implies the validity of \eqref{eigenvRic}.
\end{proof}

On the other hand, Hamilton proved in \cite{hamilton2} that when $\rho=0$ there is a constant $C$ so that
	\begin{align}\label{hamidenEQ}
		\vert\nabla f\vert^2+R-2\lambda f=C.
	\end{align}

Identity \eqref{hamidenEQ} is called Hamilton identity. It is an important tool in the development of the Ricci soliton theory. Unfortunately, it is still not known whether a similar identity holds true for $\rho$-Einstein solitons with $\rho\neq0$.

In studying these metrics, it is important to know properties of the set $\mathcal{R}\subset M$ of regular values of $f$. This issue was addressed in \cite{borges2,borges_gomes,catino1}, for example. Below we recall some of these results, that are going to be used in this work. First, we have analiticity for all but two values of $\rho$.

\begin{theorem}[\cite{catino1}]
	Let $(M^n,g,f,\lambda)$ be a $\rho$-Einstein soliton with $\rho\notin\{1/n,1/2(n-1)\}$. Then, in harmonic coordinates, the metric $g$ and the potential function $f$ are real analytic.
\end{theorem}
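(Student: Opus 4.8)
The plan is to realize the pair $(g,f)$ as a solution of a determined quasilinear elliptic system with real-analytic coefficients, and then to invoke Morrey's theorem on the analyticity of solutions of analytic nonlinear elliptic systems. Since $g$ is smooth, harmonic coordinates exist and are themselves smooth, and I would carry out the whole argument in such a chart. The role of these coordinates is the classical fact that there the Ricci tensor loses its diffeomorphism degeneracy and becomes elliptic: one has $R_{ij}=-\tfrac12 g^{ab}\partial_a\partial_b g_{ij}+Q_{ij}(g,\partial g)$, with $Q_{ij}$ a quadratic form in $\partial g$ whose coefficients are rational, hence real-analytic, in the entries of $g$. Thus the principal part of $g\mapsto\mathrm{Ric}(g)$ is the componentwise Laplace--Beltrami operator, with symbol $-\tfrac12\lvert\xi\rvert_g^2$.

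Written in such a chart, the soliton equation \eqref{fundeq} is a system of $\tfrac{n(n+1)}{2}$ second-order equations for the $\tfrac{n(n+1)}{2}+1$ unknowns $(g_{ij},f)$; it is elliptic in $g$ but underdetermined by one, since its trace merely reproduces \eqref{trace} and supplies no new leading term. The two scalar identities \eqref{trace} and \eqref{IdentitySch} are precisely what allow one to close the system, and this is exactly where the two excluded values of $\rho$ enter. Relation \eqref{trace}, namely $\Delta f=n\lambda-(1-n\rho)R$, ties the scalar curvature to the potential through the coefficient $1-n\rho$, and degenerates (loses its dependence on $R$) exactly when $\rho=1/n$; identity \eqref{IdentitySch} is a genuine second-order \emph{elliptic} equation for $R$, with leading coefficient $1-2(n-1)\rho$, precisely when $\rho\neq 1/2(n-1)$. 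Keeping $R$ as an auxiliary unknown and adjoining \eqref{trace} and \eqref{IdentitySch} to \eqref{fundeq} produces a square system for $(g_{ij},f,R)$; moreover the trace of \eqref{fundeq} combined with \eqref{trace} forces $R=g^{ij}R_{ij}$, so the enlarged system is equivalent to the soliton structure.

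The heart of the matter, and the step I expect to be the main obstacle, is verifying ellipticity of this assembled system and pinning down that it fails exactly at $\rho\in\{1/n,1/2(n-1)\}$. Concretely, one assigns Douglis--Nirenberg weights to the unknowns and the equations, forms the principal symbol at a covector $\xi\neq0$, and checks that it is an isomorphism. The symbol of \eqref{fundeq} forces $\dot g_{ij}$ proportional to $\xi_i\xi_j\,\dot f$, the symbol of \eqref{trace} then pins down $\dot f$, and finally the symbol of \eqref{IdentitySch}, proportional to $(1-2(n-1)\rho)\lvert\xi\rvert_g^2\,\dot R$, forces $\dot R=0$ as long as $\rho\neq 1/2(n-1)$. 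The delicate bookkeeping is to track how the degeneracy of \eqref{trace} at $\rho=1/n$ decouples $f$ from $R$ in the leading order, so that the $f$--$R$ sector can no longer be closed, and to confirm that away from both values the symbol is injective, hence---the system being square---an isomorphism. This is the quantitative reason the two forbidden values of $\rho$ appear, each corresponding to the vanishing of one of the structural coefficients $1-n\rho$ or $1-2(n-1)\rho$.

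Finally, every coefficient and every right-hand side of the assembled system is a polynomial in $g_{ij}$, $f$ and their derivatives and rational in $g$ through $g^{ij}$ and $\det g$; these are real-analytic functions of the unknowns and their derivatives, with no explicit dependence on the coordinates. I would then apply Morrey's analyticity theorem to the smooth solution $(g_{ij},f,R)$ to conclude that $g_{ij}$ and $f$ are real-analytic in the harmonic chart, which is the assertion.
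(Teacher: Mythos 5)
Your overall strategy---harmonic coordinates, a Douglis--Nirenberg elliptic system assembled from \eqref{fundeq}, \eqref{trace} and \eqref{IdentitySch}, and Morrey's analyticity theorem---is the same as in the cited source \cite{catino1}, and your observation that tracing \eqref{fundeq} and combining with \eqref{trace} forces $R=g^{ij}R_{ij}$ is correct. The genuine gap sits exactly in the step you flag as ``the heart of the matter'': the claim that ellipticity of your enlarged system fails precisely at $\rho\in\{1/n,1/2(n-1)\}$. In your scheme $R$ is an independent unknown appearing under a Laplacian in \eqref{IdentitySch}, so it must carry a positive weight, while the terms through which $1-n\rho$ enters---$\rho R\,g_{ij}$ in \eqref{fundeq} and $(1-n\rho)R$ in \eqref{trace}---are of order zero in $R$. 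Consequently, in every weight assignment compatible with the symbol analysis you describe, the row coming from \eqref{IdentitySch} reads $(1-2(n-1)\rho)\lvert\xi\rvert^{2}\dot R$ (plus at most a $\dot g$ term), so it annihilates $\dot R$ by itself whenever $\rho\neq 1/2(n-1)$; then the trace row forces $\dot f=0$ (its $\dot f$-coefficient is $-\lvert\xi\rvert^{2}$, regardless of whether $(1-n\rho)\dot R$ survives into the principal part), and then \eqref{fundeq} forces $\dot g=0$. The coefficient $1-n\rho$ therefore never obstructs invertibility: your argument, carried out honestly, ``proves'' analyticity for \emph{every} $\rho\neq 1/2(n-1)$, including the traceless case $\rho=1/n$---which this very paper records as open (``nothing is known in the remaining cases''). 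Either your prolongation trick (keeping $R$ as an unknown) silently resolves that open problem, which should make you extremely suspicious of a hidden error in it, or the deferred ``delicate bookkeeping'' is exactly where the proof breaks. In either reading the proposal does not establish the theorem as it stands: its central ellipticity claim is asserted with a mechanism that is demonstrably not the one at work.

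The proof in \cite{catino1} avoids this trap by using $\rho\neq 1/n$ \emph{before} any symbol computation, in a way that cannot be postponed: the trace identity \eqref{trace} is solved for the scalar curvature, $R=(n\lambda-\Delta f)/(1-n\rho)$, and this expression is substituted into both \eqref{fundeq} and \eqref{IdentitySch}. One obtains a determined system for $(g_{ij},f)$ alone, Douglis--Nirenberg elliptic with weights $t_{g}=2$, $t_{f}=4$: the gauge-fixed soliton equation has principal symbol $\tfrac12\lvert\xi\rvert^{2}\dot g_{ij}$ (the Hessian and $\Delta f\,g_{ij}$ terms now lie below principal order), while \eqref{IdentitySch} becomes a genuinely fourth-order equation for $f$ whose leading term is a nonzero multiple of $\frac{1-2(n-1)\rho}{1-n\rho}\,\Delta^{2}f$; the resulting block-triangular symbol is invertible precisely because both structural constants are nonzero, and Morrey's theorem applies. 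There each excluded value of $\rho$ does visible work---$\rho\neq1/n$ to eliminate $R$ and make the system determined, $\rho\neq1/2(n-1)$ for invertibility of the $f$-block---whereas in your scheme the first exclusion does no work at all, which is the telltale sign that the scheme has not been justified.
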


When $\rho=1/2(n-1)$, the following result was proved in \cite{borges2}.

\begin{proposition}[\cite{borges2}]\label{dense}
	Let $(M^n,g,f,\lambda)$, $\lambda\neq0$, be a complete Schouten soliton with $f$ nonconstant. The set of regular points of $f$ is dense in $M$.
\end{proposition}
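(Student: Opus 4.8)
The plan is to argue by contradiction, reducing the statement to an \emph{a priori} scalar curvature estimate. Suppose the set of regular points of $f$ were not dense. Then the critical set $\{\nabla f=0\}$ would contain a nonempty open set $\Omega$, on which $f$ is constant and hence $\nabla^2 f\equiv 0$. Substituting this into the fundamental equation \eqref{fundeq} with the Schouten value $\rho=1/2(n-1)$ yields $Ric=(\rho R+\lambda)g$ on $\Omega$, so $(\Omega,g)$ is Einstein.

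Taking the trace of $Ric=(\rho R+\lambda)g$ gives $R=n(\rho R+\lambda)$, that is $(1-n\rho)R=n\lambda$. For $\rho=1/2(n-1)$ one has $1-n\rho=\tfrac{n-2}{2(n-1)}\neq 0$, whence $R$ is forced to be the constant
\begin{align*}
R\equiv R_{0}:=\frac{2n(n-1)}{n-2}\,\lambda\qquad\text{on }\Omega,
\end{align*}
where $\lambda\neq 0$ guarantees $R_{0}\neq 0$. The key observation is that $R_0$ is too large to be admissible. Indeed, since $\tfrac{n(n-1)}{n-2}>1$ for every $n\geq 3$, we get $\lambda R_{0}=\tfrac{2n(n-1)}{n-2}\lambda^{2}>2\lambda^{2}$. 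On the other hand, for a complete Schouten soliton with $\lambda\neq 0$ the scalar curvature satisfies the pointwise bound $0\leq\lambda R\leq 2\lambda^{2}$ established in \cite{borges}. Evaluating this bound at any point of $\Omega$ contradicts $\lambda R_{0}>2\lambda^{2}$. Hence no such $\Omega$ can exist, and the regular points are dense.

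The argument rests entirely on the availability of the estimate $0\leq\lambda R\leq 2\lambda^{2}$, which is precisely where both completeness and $\lambda\neq 0$ are genuinely used; the local computation on $\Omega$ is elementary by comparison. I expect the main obstacle to be exactly this a priori bound, and one must check it is proved independently of the present statement to avoid circularity. Should one wish to avoid invoking an external curvature estimate, the natural alternative is a unique continuation argument: for $\rho=1/2(n-1)$ identity \eqref{Riccidirec} degenerates to $Ric(\nabla f,\cdot)\equiv 0$, so $\tfrac12\nabla|\nabla f|^{2}=(\rho R+\lambda)\nabla f$, and combined with rectifiability and the degenerate form of \eqref{IdentitySch} one could try to control $R-R_{0}$ by $f-a$ near $\partial\Omega$ and conclude $f\equiv a$ on the connected manifold $M$ via Aronszajn's theorem. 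The difficulty of this second route is that $\rho=1/2(n-1)$ is one of the two values at which analyticity of $(g,f)$ is unknown, so the constancy of $f$ cannot simply be continued analytically; the delicate point becomes justifying a closed differential inequality $|\Delta(f-a)|\lesssim |f-a|+|\nabla f|$ across the critical set, where $f$ fails to be a coordinate.
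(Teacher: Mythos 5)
The paper itself contains no proof of Proposition \ref{dense}: it is imported from \cite{borges2}, so the only meaningful comparison is with that source, and your argument is exactly the natural one (and, in essence, the one used there): if $\mathcal{R}$ were not dense, the critical set would contain a nonempty open set $\Omega$ on which \eqref{fundeq} forces $Ric=(\rho R+\lambda)g$; tracing with $\rho=1/2(n-1)$ pins $R\equiv\frac{2n(n-1)}{n-2}\lambda$ on $\Omega$, and this value is then played off against the global scalar curvature estimate of \cite{borges}. Your local computation is correct, and the reduction is sound. One caveat deserves attention, although it does not break the proof: the estimate as quoted in this paper's introduction, $0\le\lambda R\le 2\lambda^2$, cannot be literally correct under the normalization \eqref{fundeq}, because the shrinking round cylinder $\mathbb{R}\times\mathbb{S}^{n-1}$ is a complete nontrivial Schouten soliton with $\lambda R=2(n-1)\lambda^2$, and compact Einstein manifolds (trivial Schouten solitons) even have $\lambda R=\frac{2n(n-1)}{n-2}\lambda^2$; the bound established in \cite{borges} is $0\le\lambda R\le 2(n-1)\lambda^2$ for solitons with nonconstant potential. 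Consequently your contradiction requires the sharper comparison $\frac{2n(n-1)}{n-2}>2(n-1)$, equivalently $n>n-2$, not merely the inequality $\frac{n(n-1)}{n-2}>1$ that you verified; fortunately this holds, and since nonconstancy of $f$ is a hypothesis of the proposition, the estimate applies pointwise on all of $M$, in particular on $\Omega$. As for your circularity worry, it is the right thing to flag, and it is resolved favorably: the estimate of \cite{borges} predates and does not rest on the density statement, which is precisely why \cite{borges2} can derive density from it; your fallback unique-continuation route is therefore unnecessary (and would indeed be delicate, since analyticity fails to be known at $\rho=1/2(n-1)$, as you note).
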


For the remaining value of $\rho$, Gomes and the second author proved that:

\begin{proposition}[\cite{borges_gomes}]\label{denseEinstein}
	Let $(M^n,g,f,\lambda)$ be a traceless soliton with $f$ nonconstant. The set of regular points of $f$ is dense in $M$.
\end{proposition}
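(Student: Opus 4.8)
The plan is to exploit the fact that the traceless soliton corresponds to $\rho=1/n$, which makes the trace identity \eqref{trace} collapse dramatically. Indeed, substituting $\rho=1/n$ into \eqref{trace} gives $(1-n\rho)=0$, so that
\begin{align*}
	\Delta f=n\lambda,
\end{align*}
a constant. This single observation is the backbone of the argument: the potential function of a traceless soliton is a solution of the very rigid elliptic equation $\Delta f=n\lambda$. I will argue by contradiction, assuming that the set of regular points is not dense, i.e. that its complement (the critical set $\{\nabla f=0\}$) contains a nonempty open set $U$, and deduce that $f$ must be constant, contradicting the hypothesis.

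So suppose $\nabla f\equiv0$ on a nonempty open set $U$. Then $f$ is locally constant on $U$, whence every covariant derivative of $f$ vanishes there; in particular $\nabla^2f\equiv0$ on $U$, and therefore $\Delta f=\operatorname{tr}_g\nabla^2f\equiv0$ on $U$. Combined with $\Delta f=n\lambda$ this forces $\lambda=0$. Thus the only surviving case is $\lambda=0$, in which $f$ is a genuine harmonic function on $M$, satisfying $\Delta f\equiv0$ globally while being constant on the open set $U$. (Equivalently, one may phrase the first step as: if $\lambda\neq0$, then $\Delta f=n\lambda\neq0$ everywhere, so no open set of critical points can exist, and density is immediate; the remaining case $\lambda=0$ is where the real work lies.)

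To finish, I would invoke unique continuation for the Laplace--Beltrami operator. Since the metric $g$ is smooth, $\Delta$ is a second-order elliptic operator with smooth coefficients, so Aronszajn's unique continuation theorem applies: a solution of $\Delta f=0$ that vanishes to infinite order at a point must vanish identically on the connected manifold $M$. Applying this to $f-c$, where $c$ is the constant value of $f$ on $U$ (note $f-c$ vanishes on all of $U$, hence to infinite order at any interior point of $U$), yields $f\equiv c$ on $M$, contradicting the assumption that $f$ is nonconstant. Hence the critical set has empty interior and the regular points are dense.

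The main obstacle here is precisely that $\rho=1/n$ is one of the two values for which analyticity of $g$ and $f$ is \emph{not} available (as recalled just before the statement), so the standard trick of propagating vanishing via real-analytic continuation is unavailable. The replacement is the elliptic unique continuation theorem, which is legitimate because $f$ solves a clean constant-coefficient-type scalar equation $\Delta f=n\lambda$ rather than merely the tensorial soliton equation; isolating this scalar equation from \eqref{fundeq} via the trace is therefore the crucial reduction.
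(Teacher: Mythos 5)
Your proof is correct, but there is nothing in the paper to compare it against: Proposition \ref{denseEinstein} is stated as a citation of the preprint \cite{borges_gomes}, and no proof is reproduced here, so only an independent check of your argument is possible. That check goes through. For $\rho=1/n$ the trace identity \eqref{trace} indeed collapses to $\Delta f=n\lambda$; if the critical set had nonempty interior, then $\nabla f$, hence $\nabla^2 f$ and $\Delta f$, would vanish on an open set, forcing $\lambda=0$; and in the remaining steady case, constancy of $f$ on an open set propagates to all of $M$ by Aronszajn's strong unique continuation theorem, which legitimately applies to the Laplace--Beltrami operator of a smooth metric (Lipschitz principal coefficients suffice), yielding the contradiction with $f$ nonconstant. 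Your diagnosis of why this detour is necessary is also accurate: $\rho=1/n$ is one of the two values excluded from the analyticity theorem of \cite{catino1}, so real-analytic continuation is genuinely unavailable and unique continuation is the right substitute. Two minor points worth stating explicitly: you use that $M$ is connected (the standing convention in this literature, but the unique continuation step is where it enters), and your argument never invokes completeness, which is consistent with the statement --- unlike the companion Proposition \ref{dense} for Schouten solitons, which does assume it.
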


For steady Ricci solitons, the situation is resolved below.

\begin{theorem}[\cite{catino}]
    Let $(M^n,g,f,0)$ be a complete gradient steady Schouten soliton. Then $M$ is Ricci flat.
\end{theorem}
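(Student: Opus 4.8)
The plan is to specialize the preliminary identities to the steady Schouten case $\rho=\frac{1}{2(n-1)}$, $\lambda=0$, and then run an extremum analysis on the scalar curvature. First I would observe that the coefficient $1-2(n-1)\rho$ vanishes identically, so \eqref{Riccidirec} degenerates to $Ric(\nabla f,\cdot)\equiv0$; in particular $\nabla f$ is a null eigenvector of $Ric$ at every regular point of $f$. Feeding the same vanishing coefficient into \eqref{IdentitySch} kills the Laplacian term and leaves the purely first-order relation
\begin{align*}
	\langle\nabla f,\nabla R\rangle=2|Ric|^2-\frac{R^2}{n-1}.
\end{align*}
Since $\nabla f$ is a null direction of $Ric$, the restriction of $Ric$ to the orthogonal complement $(\nabla f)^{\perp}$ is a symmetric form of trace $R$ in dimension $n-1$, so Cauchy--Schwarz gives $|Ric|^2\geq\frac{R^2}{n-1}$ at regular points, whence $\langle\nabla f,\nabla R\rangle\geq\frac{R^2}{n-1}\geq0$.

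The endgame I have in mind is: if $R$ is constant then $\nabla R=0$, the displayed relation forces $|Ric|^2=0$, and we are done. So the whole difficulty is to prove $R$ is constant. Two local observations push in this direction. At a critical point $q$ of $f$ the relation reads $|Ric|^2=\frac{R^2}{2(n-1)}$ at $q$, which together with the universal bound $|Ric|^2\geq\frac{R^2}{n}$ forces $n\leq2$ unless $R(q)=0$; hence $R$ vanishes at every critical point of $f$. At an interior extremum of $R$ that is a regular point of $f$ we have $\nabla R=0$, so the relation gives $0\geq\frac{R^2}{n-1}$, i.e. $R=0$ there as well. Consequently any attained interior maximum or minimum of $R$ must be $0$; if both $\sup R$ and $\inf R$ are attained on $M$ --- in particular if $M$ is compact --- we get $R\equiv0$ and therefore $Ric\equiv0$.

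The main obstacle is exactly the noncompact case, where neither $\sup R$ nor $\inf R$ need be attained. Here I would try to upgrade the pointwise monotonicity $\frac{d}{dt}R(\gamma(t))=\langle\nabla R,\nabla f\rangle\geq\frac{R^2}{n-1}$ along the gradient flow $\gamma'=\nabla f$ into a genuine contradiction. The comparison ODE $\dot y=\frac{y^2}{n-1}$ blows up in finite flow-time, so a point with $R>0$ would force $R\to+\infty$ along a forward trajectory; the subtlety I expect to fight is that on a complete manifold a trajectory can escape to spatial infinity in finite flow-parameter with unbounded speed, so blow-up of $R$ alone is not yet a contradiction. To close this I anticipate needing an analytic input that the steady Schouten case lacks for free --- most naturally a maximum principle at infinity (Omori--Yau type) or a scalar-curvature and growth estimate for the associated Ricci--Bourguignon flow, for which steady solitons generate eternal solutions. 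It is worth stressing that this is precisely where the absence of a Hamilton-type identity for $\rho\neq0$ (noted after \eqref{hamidenEQ}) bites, since for genuine Ricci solitons that identity is what controls $|\nabla f|$ and the behavior of $R$ at infinity.
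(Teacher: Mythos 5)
Your reductions are all correct as far as they go: with $\rho=\tfrac{1}{2(n-1)}$ and $\lambda=0$ the coefficient $1-2(n-1)\rho$ vanishes, so \eqref{Riccidirec} gives $Ric(\nabla f,\cdot)\equiv0$, \eqref{IdentitySch} collapses to $\langle\nabla f,\nabla R\rangle=2\vert Ric\vert^2-\tfrac{R^2}{n-1}$, the trace estimate on $(\nabla f)^{\perp}$ gives $\vert Ric\vert^2\geq\tfrac{R^2}{n-1}$ at regular points, and your conclusions at critical points of $f$, at interior extrema of $R$, and in the compact case are sound. But the statement (which the paper only quotes from \cite{catino}, without proof) concerns \emph{complete} solitons, and the compact case is a degenerate subcase; the entire content is the noncompact case, which you explicitly leave open. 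That is a genuine gap, and the machinery you anticipate needing to close it (Omori--Yau, estimates for the Ricci--Bourguignon flow) is not what does the job.

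The missing idea is elementary: do not integrate the gradient flow $\gamma'=\nabla f$, whose finite-time escape you correctly identify as fatal, but the \emph{unit} field $\nabla f/\vert\nabla f\vert$, parametrized by arclength $s$, so that by Hopf--Rinow a trajectory cannot leave every compact set in finite parameter time. Since $Ric(\nabla f,\cdot)=0$, equation \eqref{fundeq} gives $\nabla\vert\nabla f\vert^2=\tfrac{R}{n-1}\nabla f$, hence along a unit-speed trajectory
\begin{align*}
\frac{d}{ds}\vert\nabla f\vert=\frac{R}{2(n-1)},\qquad
\frac{dR}{ds}=\frac{2\vert Ric\vert^2-\tfrac{R^2}{n-1}}{\vert\nabla f\vert}\geq\frac{R^2}{(n-1)\vert\nabla f\vert}.
\end{align*}
If $R>0$ at some regular point, then forward in $s$ both $R$ and $\vert\nabla f\vert$ are nondecreasing, so the trajectory stays in the regular set and is defined for all $s\geq0$ (any finite-arclength limit point is again regular, so the flow extends). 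The ratio $w=R/\vert\nabla f\vert$ then satisfies the Riccati inequality $w'\geq\tfrac{1}{2(n-1)}w^2$ with $w(0)>0$, forcing blow-up at finite arclength, contradicting smoothness of $R$ and $\vert\nabla f\vert$ on the compact ball containing the curve. The case $R<0$ is symmetric under reversing the flow. Hence $R\equiv0$ on the regular set, and your own identity then gives $Ric\equiv0$ there; on the interior of the critical set the soliton equation forces $Ric=\tfrac{R}{2(n-1)}g$, whose trace yields $R=0$ and $Ric=0$; density and continuity finish. This is, in substance, how Catino and Mazzieri argue: no Hamilton-type identity \eqref{hamidenEQ} and no maximum principle at infinity is needed, only completeness used through arclength parametrization.
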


Thus, nontrivial complete noncompact gradient steady Schouten solitons are isometric to a product of a steady Gaussian soliton and a Ricci flat manifold.

As a consequence of these results, we have:

\begin{corollary}[Density of $\mathcal{R}$]\label{densedense}
	Let $(M^n,g,f,\lambda)$ be a complete $\rho$-Einstein soliton with $f$ nonconstant. Then $\mathcal{R}$ is dense in $M$.
\end{corollary}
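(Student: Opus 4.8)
The plan is to establish the density of $\mathcal{R}$ by a case analysis on the value of $\rho$, patching together the three density/rigidity results recalled immediately above according to which regime $\rho$ falls into. Throughout I assume $M$ connected, as is standard; then ``$f$ nonconstant'' is equivalent to saying that $f$ is constant on no nonempty open set.

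First I would dispose of the generic case $\rho\notin\{1/n,1/2(n-1)\}$. Here the analyticity theorem of \cite{catino1} guarantees that, in harmonic coordinates, both $g$ and $f$ are real analytic; since harmonic charts form a real-analytic atlas, $M$ is a real-analytic manifold on which $f$ is a real-analytic function. If the critical set $\{x\in M:\nabla f(x)=0\}$ had nonempty interior, then on some open set all the partial derivatives $\partial_i f$ would vanish, so $f$ would be locally constant; the identity principle for real-analytic functions on a connected manifold would then force $f$ to be constant on all of $M$, contradicting the hypothesis. Hence the critical set has empty interior, which is exactly the statement that $\mathcal{R}$ is dense.

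It remains to treat the two exceptional values, which are precisely the ones for which analyticity is unavailable and for which dedicated density statements were recorded above. For $\rho=1/n$ (the traceless case) density is Proposition \ref{denseEinstein} verbatim. For $\rho=1/2(n-1)$ (the Schouten case) I would split on $\lambda$: if $\lambda\neq0$, density is Proposition \ref{dense}; if $\lambda=0$, the steady Schouten theorem of \cite{catino} shows $M$ is Ricci flat, so $R=0$, and the defining equation \eqref{fundeq} collapses to $\nabla^2 f=0$. Thus $\nabla f$ is parallel and $\vert\nabla f\vert$ is constant; since $f$ is nonconstant this constant is positive, so $\nabla f$ never vanishes and in fact $\mathcal{R}=M$.

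I expect the only genuine content to lie in the generic case, where the obstacle is the passage from \emph{local} constancy on an open set to \emph{global} constancy on $M$; this is handled cleanly by the identity principle once analyticity is in hand. It is worth emphasizing that this is exactly why the two values $\rho\in\{1/n,1/2(n-1)\}$ — the non-analytic ones — must be excluded from this argument and instead covered by the dedicated Propositions \ref{dense} and \ref{denseEinstein} together with the steady Schouten rigidity.
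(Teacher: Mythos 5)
Your proof is correct and is essentially the paper's own (implicit) argument: the corollary is stated there as an immediate consequence of the preceding results — the analyticity theorem of \cite{catino1}, Propositions \ref{dense} and \ref{denseEinstein}, and the steady Schouten rigidity theorem — assembled by exactly the case analysis on $\rho$ (and, in the Schouten case, on $\lambda$) that you carry out. The only details you add, the identity-principle step in the analytic case and the collapse to $\nabla^2 f=0$ (hence $\mathcal{R}=M$) in the steady Schouten case, are both correct and are precisely what the paper leaves to the reader.
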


\subsection{Examples of $\rho$-Einstein soliton}\label{subsecExamp}

In this section, we recall examples of $\rho$-Einstein solitons, besides the Einstein ones, for which the potential function is constant. The next two examples have constant scalar curvature, and appear in many classification results (e.g., \cite{catino,catino1,borges2}).

\begin{example}[Gaussian soliton]
	A Gaussian $\rho$-Einstein soliton is the Euclidean space endowed its canonical metric, and whose potential function is given by
	\begin{align*}
			f(x)=\frac{1}{2}\lambda\vert x\vert^2+\left\langle v,x\right\rangle,
	\end{align*}
	for fixed $\lambda\in\mathbb{R}$ and $v\in\mathbb{R}^n$.
\end{example}

\begin{example}[Rigid solitons]\label{example}
	Given an Einstein manifold $(N^{k},g_{N})$ with scalar curvature $R_{N}$, consider $\rho,\ \lambda\in\mathbb{R}$ so that $(1-k\rho)R_{N}=\lambda k$ and a Gaussian soliton $\mathbb{R}^{m}$, with potential function
		$$f(x)=\frac{1}{2}(\rho R_{N}+\lambda)\vert x\vert^2+\left\langle v,x\right\rangle,$$
	with $v\in\mathbb{R}^n$ fixed. Then, $(\mathbb{R}^{m}\times N^{k},g_{can}+g_{N},f,\lambda)$ is a $\rho$-Einstein soliton, with $f(x,p)=f(x)$.
\end{example}

A Riemannian manifold is called {\it rigid} if it is isometric to one of those described in Example \ref{example}. An important reference on rigid Ricci solitons is \cite {petersen}.

In \cite{bryant}, Robert Bryant constructed an example of complete and rotationally symmetric steady Ricci soliton. Below, we state Catino and Mazzieri's theorem \cite{catino}, which extends the example of Bryant to $\rho$-Einstein solitons with $\rho\notin[1/2(n-1),1/(n-1))$. Following \cite{catino}, a complete simply connected gradient $\rho$-Einstein soliton $(M,g,f,\lambda)$ is a warped product with canonical fiber, if
\begin{align}\label{canon_fiber}
    g=dt^2+h(t)^2g_{can},\ \text{in}\ M^2\backslash\Lambda,
\end{align}
where $g_{can}$ is a metric of constant curvature on a $(n-1)$-dimensional manifold, $t\in(t^{*},t_{*})$, $-\infty\leq t_{*}<t^{*}\leq+\infty$, $h:(t^{*},t_{*})\to(0,+\infty)$ is a smooth function and $\Lambda\subset M$ is a set with at most two points.

They proved the following result:
\begin{theorem}[\cite{catino}]\label{rotat_=0}
    If $\rho<1/2(n-1)$ or $\rho\geq1/(n-1)$, $n\geq3$, then, up to homotheties, there exists a unique complete non-compact, gradient steady $\rho$-Einstein soliton which is a warped product with canonical fibers as in \eqref{canon_fiber} with $Ric(\partial_{t},\partial_{t})(t_0)>0$ at some $t_{0}\in(t^{*},t_{*})$. This solution is rotationally symmetric and has positive sectional curvature.

    If $1/2(n-1)\leq\rho<1/(n-1))$, then there are no complete, non-compact, gradient steady $\rho$-Einstein solitons which are warped products with canonical fibers as in \eqref{canon_fiber} with $Ric(\partial_{t},\partial_{t})(t_0)
    >0$ for some $t_{0}\in(t^{*},t_{*})$.
\end{theorem}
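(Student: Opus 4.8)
The plan is to exploit the rotational symmetry to reduce the soliton equation to a system of ordinary differential equations, analyze it as a singular initial value problem at the pole, and then use a global phase-plane argument to settle completeness, curvature positivity, and the admissible range of $\rho$.

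First I would insert the ansatz $g=dt^2+h(t)^2 g_{can}$, with $g_{can}$ the round metric of curvature $1$ on $S^{n-1}$ and $f=f(t)$, into \eqref{fundeq} with $\lambda=0$. The warped-product formulas give $Ric(\partial_t,\partial_t)=-(n-1)h''/h$, $\nabla^2f(\partial_t,\partial_t)=f''$, and
$$R=-2(n-1)\frac{h''}{h}+(n-1)(n-2)\frac{1-(h')^2}{h^2},$$
so the radial component of \eqref{fundeq} reads $-(n-1)h''/h+f''=\rho R$, while the trace identity \eqref{trace} becomes $f''+(n-1)(h'/h)f'=(n\rho-1)R$. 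Subtracting these produces the clean relation
$$\frac{h''}{h}+\frac{h'}{h}\,f'=\Big(\rho-\frac{1}{n-1}\Big)R,$$
which already isolates $\rho=1/(n-1)$ as the value where the right-hand side vanishes and $h'e^{f}$ becomes a first integral. Similarly, feeding $\nabla f=f'\partial_t$ into \eqref{Riccidirec} shows that at $\rho=1/2(n-1)$ the field $\nabla f$ lies in the kernel of $Ric$, forcing $h''\equiv0$ and hence obstructing $Ric(\partial_t,\partial_t)>0$; this is the source of the other critical value.

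Next I would impose the conditions making the metric close up smoothly at a center of rotational symmetry, namely $h(0)=0$, $h'(0)=1$, and $f'(0)=0$, leaving the value $f(0)$ and one further constant free. Because of the $h''/h$ and $(h')^2/h^2$ terms the system is singular at $t=0$, so local solvability does not follow from Picard--Lindel\"of; I would instead treat it as a Briot--Bouquet type singular problem and construct the unique smooth solution near the pole by a contraction argument on a weighted function space, or equivalently via a convergent power-series expansion $h(t)=t+a_3t^3+\cdots$, $f(t)=f_0+\tfrac12 f_2 t^2+\cdots$, whose higher coefficients are determined recursively by a single free parameter. Since \eqref{fundeq} is invariant under $f\mapsto f+\mathrm{const}$ while a simultaneous rescaling of $(t,h)$ is a homothety, this one parameter is precisely the homothety freedom, yielding uniqueness up to homotheties.

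For the global statement I would pass to a first-order autonomous system in dimensionless variables such as $x=h'$ and $y=hf'$, so that the soliton corresponds to a trajectory emanating from the critical point representing the pole and limiting onto the critical point that governs the asymptotics as $t\to\infty$. Completeness is the assertion that $h>0$ and $t$ sweeps out $[0,\infty)$ along this orbit, while positivity of the sectional curvature follows once one checks that $h''<0$ and $(h')^2<1$ are preserved along it. The existence dichotomy is then exactly the statement that a connecting orbit with these properties exists if and only if $\rho<1/2(n-1)$ or $\rho\geq1/(n-1)$: the boundary values are the $\rho$ at which the linearization of the autonomous system at the relevant critical point changes type, so that the stable and unstable manifolds either do or do not meet. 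I expect this last step to be the main obstacle. Local ODE theory disposes of everything near the pole, but proving that the connecting orbit persists globally throughout the stated range—and, conversely, that on $[1/2(n-1),1/(n-1))$ the hypothesis $Ric(\partial_t,\partial_t)>0$, equivalently $h''<0$ somewhere, is incompatible with completeness—requires controlling the full nonlinear flow through monotonicity of suitable Lyapunov-type quantities and a careful spectral analysis at the critical points. This global control, rather than the reduction or the local existence, is where the real difficulty lies.
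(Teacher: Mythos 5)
First, a point of status: the paper does not prove this statement at all --- it is a theorem of Catino--Mazzieri \cite{catino}, quoted verbatim as background material --- so the comparison here is against that source rather than against anything in the present article. Your reduction agrees with how \cite{catino} begins: the warped-product curvature formulas, the relation $\frac{h''}{h}+\frac{h'}{h}f'=\left(\rho-\tfrac{1}{n-1}\right)R$, and the identification of $1/(n-1)$ and $1/2(n-1)$ as the degenerate values are all correct, and treating the pole as a singular initial value problem (power series / Briot--Bouquet, with the one free parameter absorbed by the homothety) is the standard and essentially correct way to obtain local existence and uniqueness of the rotationally symmetric candidate.

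However, there are two genuine gaps. (a) By imposing $h(0)=0$, $h'(0)=1$, $f'(0)=0$ and taking $g_{can}$ to be the round sphere from the outset, you have \emph{assumed} rotational symmetry, whereas the theorem quantifies over all warped products with canonical fibers as in \eqref{canon_fiber}: the fiber may be spherical, flat or hyperbolic, and the manifold need not close up at a pole (it can have cylindrical topology). Part of the content of the theorem is precisely that completeness together with $Ric(\partial_{t},\partial_{t})(t_{0})>0$ forces the spherical fiber and the pole; your argument yields uniqueness only within the rotationally symmetric class. (b) The existence/nonexistence dichotomy in $\rho$ --- which \emph{is} the theorem --- is deferred entirely to an unproved step (``monotonicity of suitable Lyapunov-type quantities and a careful spectral analysis''). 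The heuristic that the thresholds are where a linearization at a critical point changes type is not substantiated, and it cannot by itself be the whole story: it does not explain the asymmetric inclusion of endpoints (existence holds at $\rho=1/(n-1)$ but fails at $\rho=1/2(n-1)$), and it gives no argument for nonexistence on the open interval $(1/2(n-1),1/(n-1))$; your kernel-of-Ricci observation via \eqref{Riccidirec} handles only the single value $\rho=1/2(n-1)$, and even there only at points where $f'\neq0$. In \cite{catino} this global step is the bulk of the proof: a full phase-space analysis with explicit invariant regions and monotone quantities, carried out separately on the ranges $\rho<1/2(n-1)$, $\rho\geq1/(n-1)$ and $[1/2(n-1),1/(n-1))$. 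As it stands, your proposal is a correct reduction plus an accurate description of what remains to be done, not a proof.
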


For geometric properties of these solitons see Proposition 4.6 in \cite{catino}. The following uniqueness result was also proved by Catino and Mazzieri in the same article.

\begin{theorem}[\cite{catino}]
    Up to homotheties, there is only one complete $n$-dimensional locally conformally flat gradient steady $\rho$-Einstein soliton with $\rho<0$ or $\rho\geq1/2$ and positive sectional curvature, namely, the rotationally symmetric one, constructed in Theorem \ref{rotat_=0}.
\end{theorem}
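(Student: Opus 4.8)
The plan is to reduce to the uniqueness clause of Theorem \ref{rotat_=0} by proving that any soliton satisfying the hypotheses is, globally, a rotationally symmetric warped product with canonical fibers. First I would record the structural consequences of the hypotheses. Since $\rho<0$ or $\rho\geq1/2$ we have $\rho\neq0$, so the soliton is rectifiable and Proposition \ref{eigenvRic_stat} gives that $\nabla f$ is an eigenvector of $Ric$ at every regular point of $f$. The soliton is nontrivial: were $f$ constant, the equation would read $Ric=\rho R\,g$ with $(1-n\rho)R=0$, and since $\rho\neq1/n$ in our range this forces $Ric=0$, contradicting positive sectional curvature. Moreover $M$ is noncompact, for integrating the trace identity \eqref{trace} with $\lambda=0$ over a closed $M$ would give $(1-n\rho)\int_{M}R=0$, while $\rho\neq1/n$ and positive sectional curvature force $R>0$, a contradiction. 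Being complete, noncompact and positively curved, $M$ is diffeomorphic to $\mathbb{R}^{n}$ by the soul theorem, hence simply connected; and since $\rho\notin\{1/n,1/2(n-1)\}$ throughout the assumed range, $g$ and $f$ are real analytic and, by Corollary \ref{densedense}, the regular set $\mathcal{R}$ is dense.

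Next I would extract a local warped product decomposition. Local conformal flatness makes $M$ Bach flat, so $B(\nabla f,\nabla f)\equiv0$ and the hypotheses of the Main Theorem \ref{main_thm} hold. Applying it, around each regular point $M$ is isometric to $(-\varepsilon,\varepsilon)\times_{h}N^{n-1}$ with $N$ Einstein. A warped product over a one-dimensional base is locally conformally flat precisely when its fiber has constant sectional curvature (here $n-1\geq2$), so each fiber $N$ is a space form; the sign of its curvature $\kappa$ is fixed by the tangential sectional curvatures of the warped product, which equal $(\kappa-(h')^2)/h^2$, forcing $\kappa>(h')^2\geq0$, hence $\kappa>0$ and $N$ a spherical space form.

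I would then globalize and invoke uniqueness. Real analyticity of $(g,f)$ together with density of $\mathcal{R}$ lets the local warped product charts be patched, along the flow of $\nabla f/|\nabla f|^2$, into a single rotationally symmetric model as in \eqref{canon_fiber}, with the at most two critical points of $f$ serving as the poles; simple connectivity upgrades the spherical fibers to the round sphere $\mathbb{S}^{n-1}$. The result is a complete noncompact gradient steady $\rho$-Einstein soliton that is a warped product with canonical fibers and has $Ric(\partial_{t},\partial_{t})>0$ somewhere. Since $\rho<0$ or $\rho\geq1/2$ lies inside the existence range of Theorem \ref{rotat_=0} (indeed $\rho<0<1/2(n-1)$ and $\rho\geq1/2\geq1/(n-1)$ for $n\geq3$), its uniqueness clause identifies $M$, up to homotheties, with the Bryant-type soliton constructed there.

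The main obstacle is precisely this globalization step: one must verify that the locally defined warping functions and constant-curvature fibers are mutually compatible, that the gradient flow organizes the level sets into a genuine product over an interval away from the critical set, and that $f$ has at most two critical points around which the metric closes up smoothly to a round sphere. Real analyticity of $g$ and $f$---available exactly because $\rho\notin\{1/n,1/2(n-1)\}$ on the assumed range---is what powers this patching, and is the reason the range $\rho<0$ or $\rho\geq1/2$, rather than the full existence range of Theorem \ref{rotat_=0}, appears in the statement.
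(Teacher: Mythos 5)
Your overall route is sound and, in fact, tracks the strategy the paper itself uses: note that the paper does not prove this statement (it is quoted from \cite{catino}), but it proves a strengthening of it, Theorem \ref{thm_classif}, item \eqref{thegenerali}, via Theorem \ref{theorem_rotationally}, and your chain of reductions is essentially the same one: local conformal flatness $\Rightarrow$ Bach flatness $\Rightarrow$ Theorem \ref{main_thm} gives a local warped product with Einstein fiber $\Rightarrow$ the fiber is a space form, which positive sectional curvature forces to be spherical $\Rightarrow$ globalize $\Rightarrow$ invoke the uniqueness clause of Theorem \ref{rotat_=0}. Your preliminary reductions (nontriviality, noncompactness from the traced equation \eqref{trace}, simple connectedness via Gromoll--Meyer, positivity of $Ric(\partial_t,\partial_t)$, and the inclusion of $\rho<0$ or $\rho\geq 1/2$ in the existence range of Theorem \ref{rotat_=0}) are all correct.

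The genuine gap is the globalization step, which you name as ``the main obstacle'' but then dispatch by asserting that analyticity and density of $\mathcal{R}$ ``let the local warped product charts be patched.'' This is precisely the hard part, and nothing in your argument delivers it: you do not show that the level sets of $f$ are compact and connected, that the locally defined fibers and warping functions assemble into a single fiber over one interval, that the critical set of $f$ consists of at most two points, or that the metric closes up smoothly at those points --- all of which are needed before $M$ qualifies as a warped product with canonical fibers in the sense of \eqref{canon_fiber}, the hypothesis of Theorem \ref{rotat_=0}. The paper carries out exactly this step in the proof of Theorem \ref{theorem_rotationally}, not by analytic continuation but by the argument of Fern\'andez-L\'opez--Garc\'ia-R\'io \cite{fer_gar}: fix $F=f^{-1}(0)$, which is embedded and two-sided by simple connectedness, consider the signed distance $d_F$ and the sets $I_{\pm}=\{b\ \vert\ g=dt^2+h^2g_F\ \text{in}\ d_F^{-1}([0,b])\}$ (and its negative counterpart), and show these are maximal intervals using the density of regular points (Corollary \ref{densedense}). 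Supplying that argument would close your proof. Finally, your closing explanation of the hypothesis is wrong: analyticity is not ``the reason'' the range $\rho<0$ or $\rho\geq1/2$ appears, since both $1/n$ and $1/2(n-1)$ already lie outside the full existence range $(-\infty,1/2(n-1))\cup[1/(n-1),\infty)$ of Theorem \ref{rotat_=0}, so analyticity is available there as well. The restricted range is an artifact of Catino--Mazzieri's original method; indeed your own argument, once the globalization is supplied, uses nothing beyond membership in the maximal range --- which is exactly the content of the paper's improvement in Theorem \ref{thm_classif}.
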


When $n=3$, the uniqueness is stronger, as shown below.

\begin{theorem}[\cite{catino}]\label{rotat_=03}
    Up to homotheties, there is only one complete $3$-dimensional gradient steady $\rho$-Einstein soliton with $\rho<0$ or $\rho\geq1/2$ and positive sectional curvature, namely, the rotationally symmetric one, constructed in Theorem \ref{rotat_=0}.
\end{theorem}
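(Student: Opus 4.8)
The plan is to deduce Theorem~\ref{rotat_=03} from the preceding $n$-dimensional classification, the only gap being that in dimension three we are not permitted to assume local conformal flatness a priori. The decisive point is dimensional: on a three-manifold the Weyl tensor vanishes identically, so local conformal flatness is no longer equivalent to $W=0$, but rather to the vanishing of the Cotton tensor
\begin{align*}
	C_{ijk}=\nabla_i A_{jk}-\nabla_j A_{ik},\qquad A_{ij}=R_{ij}-\tfrac{R}{4}g_{ij},
\end{align*}
where $A$ is the Schouten tensor. Hence the whole statement follows once we prove that a complete three-dimensional gradient steady $\rho$-Einstein soliton with $\rho<0$ or $\rho\ge 1/2$ and positive sectional curvature satisfies $C\equiv 0$: the preceding theorem then applies verbatim and identifies it, up to homothety, with the rotationally symmetric model of Theorem~\ref{rotat_=0}.

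To establish $C\equiv 0$ I would work on the open dense set $\mathcal{R}$ of regular points of $f$, dense by Corollary~\ref{densedense}, and where (since $\rho\neq0$ in the admissible range) the soliton is rectifiable. There Proposition~\ref{eigenvRic_stat} shows $\nabla f$ is an eigenvector of $Ric$, so one chooses an orthonormal frame $e_1=\nabla f/\vert\nabla f\vert,e_2,e_3$ diagonalizing $Ric$, with radial eigenvalue $\mu_1$ and tangential eigenvalues $\mu_2,\mu_3$ on the level surfaces $\Sigma_c=f^{-1}(c)$. The soliton equation~\eqref{fundeq} gives $\nabla^2 f=(\rho R+\lambda)g-Ric$, so the second fundamental form of $\Sigma_c$ is diagonalized by the same frame, with principal curvatures proportional to $(\rho R+\lambda)-\mu_2$ and $(\rho R+\lambda)-\mu_3$; thus the leaves are totally umbilic exactly when $\mu_2=\mu_3$. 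Once $\mu_2=\mu_3$ holds, rectifiability forces $\vert\nabla f\vert$ and the eigenvalues to depend only on the $f$-parameter, and together with $\nabla R\parallel\nabla f$ (which is \eqref{wedgezero2}) a direct expansion of $\nabla_i A_{jk}$ in this adapted frame shows that all components of $C$ vanish; equivalently one obtains a local warped product over a surface of constant curvature, which is locally conformally flat.

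The core of the argument, and the main obstacle, is to force $\mu_2=\mu_3$ from the curvature hypotheses alone. I would differentiate the soliton system, insert the trace identity~\eqref{trace} and~\eqref{IdentitySch} into the contracted Bianchi identity~\eqref{divricc}, and read off the evolution of $\mu_2-\mu_3$ along the flow of $\nabla f$; the expectation is that steadiness ($\lambda=0$) together with positive sectional curvature renders this evolution sign-definite, so that a nonzero $\mu_2-\mu_3$ cannot persist on a complete manifold, and precisely on the range $\rho<0$ or $\rho\ge 1/2$. The delicate points are ruling out the off-diagonal components of $C$ without presupposing rotational symmetry, and checking that the mechanism degenerates exactly on the excluded interval $1/2(n-1)\le\rho<1/(n-1)$, in agreement with the nonexistence clause of Theorem~\ref{rotat_=0}.

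Finally, within the framework of the present paper there is a shorter route to the same reduction: once one certifies $B(\nabla f,\nabla f)\ge 0$, item~(\ref{item3}) of Corollary~\ref{main_cor} yields local conformal flatness directly in dimensions three and four, and Corollary~\ref{cor_W=0} then closes the classification. In either approach the only remaining work is to extract the radial nonnegativity of the Bach tensor—equivalently the Codazzi property of the Schouten tensor, equivalently $\mu_2=\mu_3$—from positive sectional curvature, after which the already-established locally conformally flat case finishes the proof.
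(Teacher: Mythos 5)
First, a point of orientation: the paper does not prove this statement at all --- it is quoted as background from Catino--Mazzieri \cite{catino} (together with Theorems \ref{rotat_=0} and \ref{rotat_=04}), so there is no in-paper proof to compare against, and your proposal has to stand on its own against the original argument in \cite{catino}.

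Your reduction is correct as far as it goes: in dimension three local conformal flatness means vanishing of the Cotton tensor; on the admissible range $\rho<0$ or $\rho\geq 1/2$ one has $\rho\neq 0$, so the soliton is rectifiable and Proposition \ref{eigenvRic_stat} makes $\nabla f$ a Ricci eigenvector; and once the two tangential Ricci eigenvalues coincide ($\mu_2=\mu_3$ in your notation), the paper's machinery (Proposition \ref{local_warp} plus the fact that a two-dimensional Einstein fiber has constant curvature) gives local conformal flatness, after which the $n$-dimensional uniqueness theorem quoted immediately before the statement applies. The genuine gap is that you never prove $\mu_2=\mu_3$. That step is the entire content of the theorem; everything surrounding it is bookkeeping. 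Your proposal replaces it with an ``expectation'' that differentiating the soliton system yields a sign-definite evolution of $\mu_2-\mu_3$ under steadiness and positive sectional curvature --- no such computation is carried out, no maximum-principle or completeness argument is formulated, and nothing explains why the mechanism should degenerate exactly on the excluded interval $[1/2(n-1),1/(n-1))$. Moreover, your ``shorter route'' is not shorter: by Theorem \ref{proptwoeigen} one always has $\mu\leq 0$, i.e.\ $B(\nabla f,\nabla f)\leq 0$ holds automatically for rectifiable solitons, so ``certifying $B(\nabla f,\nabla f)\geq 0$'' is literally the same assertion as $\mu_2=\mu_3$ (equality in the Cauchy--Schwarz step), not an independent condition that positive sectional curvature could be expected to deliver by softer means. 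As it stands, the proposal is a correct reduction of the theorem to its hard core, plus an unproven hope where that core should be; the actual proof of this point in \cite{catino} is a substantial argument that your sketch does not reproduce.
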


If one requires that $Ric(\partial_{t},\partial_{t})=0$, then the following non-existence was established.

\begin{theorem}[\cite{catino}]\label{rotat_=04}
    Let $(M^n,g,f,\lambda)$, $n\geq3$, be a complete, noncompact gradient $\rho$-Einstein soliton which is a warped product with canonical fibers as in \eqref{canon_fiber}. If $Ric(\partial_{t},\partial_{t})=0$, then $(M,g)$ is either homothetic to the round cylinder $\mathbb{R}\times\mathbb{S}^{n-1}$, or to the hyperbolic cylinder $\mathbb{R}\times\mathbb{H}^{n-1}$, or to the flat $\mathbb{R}^{n}$.
\end{theorem}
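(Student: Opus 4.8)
The plan is to reduce everything to the behaviour of the warping function $h$ and then to read the isometry type off from completeness; the curvature hypothesis is used only once. The starting point is the standard expression for the Ricci tensor of a warped product $g=dt^2+h(t)^2g_{can}$ with an $(n-1)$-dimensional fibre of constant curvature, whose radial component is \[ Ric(\partial_t,\partial_t)=-(n-1)\frac{h''}{h}. \] Hence the hypothesis $Ric(\partial_t,\partial_t)=0$ is equivalent to $h''\equiv0$ on $(t^*,t_*)$, so that $h(t)=at+b$ is affine. Everything after this is a completeness analysis combined with the simple-connectedness built into the definition \eqref{canon_fiber}.

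I would first dispose of the case $a=0$. Then $h\equiv b>0$ is constant and $g=dt^2+b^2g_{can}$ is a Riemannian product $\mathbb{R}\times N^{n-1}$ with $N$ a complete space form. Since $M$ is simply connected and, for $n\geq3$, removing the finite set $\Lambda$ does not affect the fundamental group, $N\simeq(t^*,t_*)\times N$ is simply connected; thus after rescaling $N$ is isometric to $\mathbb{S}^{n-1}$, $\mathbb{R}^{n-1}$ or $\mathbb{H}^{n-1}$ according to the sign of its curvature, giving exactly the round cylinder $\mathbb{R}\times\mathbb{S}^{n-1}$, the flat space $\mathbb{R}^n=\mathbb{R}\times\mathbb{R}^{n-1}$, or the hyperbolic cylinder $\mathbb{R}\times\mathbb{H}^{n-1}$.

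It then remains to show that $a\neq0$ collapses into the flat case. A nonconstant affine $h$ is positive only on a half-line and vanishes at a single endpoint $t_0$; since the distance to $t_0$ is finite, completeness forces the metric to extend smoothly across the collapsing fibre (the point recorded in $\Lambda$). Writing $r=t-t_0$, near $t_0$ the metric reads $dr^2+a^2r^2g_{can}$, a metric cone over $(N,g_{can})$. The decisive observation is that such a cone closes up to a smooth point precisely when its link is a round unit sphere, i.e. $g_{can}$ has positive curvature and $a$ is tuned so that $a^2g_{can}=g_{\mathbb{S}^{n-1}(1)}$, in which case the cone is flat $\mathbb{R}^n$; for a flat or negatively curved fibre no smooth closure exists, and a spherical fibre with the wrong $a$ leaves a genuine conical singularity, so in either of these cases $M$ would be incomplete, contradicting the hypothesis. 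Hence $a\neq0$ forces $(M,g)$ to be flat $\mathbb{R}^n$, completing the trichotomy.

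The step I expect to be the main obstacle is precisely this last smoothness analysis at the collapsing end: one must argue carefully that the only complete smooth extension of $dr^2+a^2r^2g_{can}$ is the flat $\mathbb{R}^n$ closure, simultaneously excluding conical singularities (spherical link with wrong cone angle) and the impossibility of capping off flat or hyperbolic cross-sections. By contrast, the soliton structure plays essentially no role in identifying the metric once $h''=0$ is known; it only supplies the warped-product form and the potential, and one may verify a posteriori that each of the three models carries a suitable radial potential $f=f(t)$ of Gaussian type.
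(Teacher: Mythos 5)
This theorem is not proved in the paper at all---it is quoted verbatim from Catino--Mazzieri \cite{catino}---so there is no internal proof to compare against; judged on its own, your argument is correct and follows the same standard route as the original: the warped-product identity $Ric(\partial_t,\partial_t)=-(n-1)h''/h$ converts the hypothesis into $h''\equiv 0$, constant $h$ gives $\mathbb{R}\times N$ with $N$ a complete simply connected space form (hence $\mathbb{R}\times\mathbb{S}^{n-1}$, $\mathbb{R}^n$, or $\mathbb{R}\times\mathbb{H}^{n-1}$), and nonconstant affine $h$ produces a collapsing finite-distance end whose smooth closure at the point of $\Lambda$ forces the cone $dr^2+a^2r^2g_{can}$ to have unit round spherical link, i.e.\ flat $\mathbb{R}^n$. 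The only details worth writing out are (i) that any finite endpoint of $(t^{*},t_{*})$ must be one where $h\to 0$, since otherwise the metric boundary of the warped region would be an entire copy of the fibre rather than the at most two points allowed in $\Lambda$, and (ii) that the simple connectedness built into \eqref{canon_fiber} is what excludes quotients of the model fibres and conical singularities; both are implicit but unproblematic in your write-up, and the soliton structure indeed plays no role beyond supplying the hypotheses, exactly as you observe.
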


One may wonder what happens if $Ric(\partial_{t},\partial_{t})<0$ at some $t_{0}\in(t^{*},t_{*})$. We are aware of two explicit examples of three dimensional complete locally conformally flat traceless Ricci solitons, due to the work of Agila and Gomes \cite{agila}. We present them below adopting the point of view of \cite{borges_gomes}, by Gomes and the second author.

\begin{example}[Example 1 in \cite{agila}]\label{exag_1}
    Consider the warped product $\mathbb{R}\times_{h}F^{2}$ and the function $f:\mathbb{R}\times F^{2}\rightarrow\mathbb{R}$, where $Ric_{F}=0$ and
    \begin{align*}
        &h(t)=\displaystyle\sqrt{\frac{e^{2t}+1}{e^{2t}}},\\
        &f(t,x)=\frac{1}{3}\log(e^{2t}+1),
    \end{align*}
    $(t,x)\in\mathbb{R}\times F^{2}$. A straightforward computation shows that the equation
    \begin{align*}
        Ric+\nabla^2 f=\frac{1}{3}Rg,
    \end{align*}
    is satisfied, making $(\mathbb{R}\times_{h}F^{2},g,f,0)$ a complete steady $(1/3)$-Einstein soliton, that is, a traceless Ricci soliton. 
\end{example}

\begin{example}[Example 2 in \cite{agila}]\label{exag_2}
       Consider the warped product $\mathbb{R}\times_{h}F^{2}$ and the function $f:\mathbb{R}\times F^{2}\rightarrow\mathbb{R}$, where $Ric_{F}=0$ and
    \begin{align*}
        &h(t)=\sqrt{t^2+1}\\
        &f(t,x)=\frac{1}{12}(4\log(t^2+1)+2t^2+1),
    \end{align*}
    $(t,x)\in\mathbb{R}\times F^{2}$. A straightforward computation shows that the equation
    \begin{align*}
        Ric+\nabla^2 f=\left(\frac{1}{3}R+\frac{1}{3}\right)g,
    \end{align*}
    is satisfied, making $(\mathbb{R}\times_{h}F^{2},g,f,1/3)$ a complete shrinking $(1/3)$-Einstein soliton, that is, a traceless Ricci soliton.
\end{example}

%\begin{remark}
  We finish this section observing that the traceless Ricci solitons of examples \ref{exag_1} and \ref{exag_2} are locally conformally flat. However, as the fiber is Ricci flat, the soliton is not rotationally symmetric (see item $(4)$ of Theorem 3 in \cite{choi}).
%\end{remark}

\section{The Bach Tensor of a $\rho$-Einstein Soliton}\label{sectionbachT}

In this section, we compute the Bach tensor of a $\rho$-Einstein soliton. In Subsection \ref{weyl}, we recall the definitions of the tensors that we will need, specially the definition of the Bach tensor. In Subsection \ref{eigenvalue_Bach}, we use the rectifiability of the gradient $\rho$-Einstein soliton to prove that $\nabla f$ is an eigenvector of $B$. See Proposition \ref{Propgradeigv} below. In the next section, we investigate the algebraic and geometric content of the vanishment of the eigenvalue corresponding to $\nabla f$, which afterward leads to the proof of our main theorem. 

\subsection{The Weyl and Cotton tensors}\label{weyl}
Let us recall some definitions. Let $(M^n,g)$ be a Riemannian manifold. Here we use the notations of \cite{chowluni}. For any $n\geq3$ its {\it Weyl} and {\it Cotton} tensors are defined, respectively, by
\begin{align}
	W_{ijkl}=&R_{ijkl}-\frac{1}{n-2}(g_{ik}R_{jl}-g_{il}R_{jk}-g_{jk}R_{il}+g_{jl}R_{ik})\label{weyl}\\
	&+\frac{R}{(n-1)(n-2)}(g_{ik}g_{jl}-g_{il}g_{jk}),\nonumber\\
	C_{ijk}=&\nabla_{i}R_{jk}-\nabla_{j}R_{ik}-\frac{1}{2(n-1)}(g_{jk}\nabla_{i}R-g_{ik}\nabla_{j}R)\label{cotton},
\end{align}
and for $n\geq4$ the {\it Bach} tensor is defined by
\begin{align}\label{origbach}
	B_{ij}=&\frac{1}{n-3}\nabla^{k}\nabla^{l}W_{ikjl}+\frac{1}{n-2}R_{kl}W_{i\ j}^{\ k\ l}.
\end{align}
In terms of the Cotton tensor, $B$ can be rewritten by
\begin{align}\label{bhct}
	(n-2)B_{ij}=&\nabla^{k}C_{kij}+R_{kl}W_{i\ j}^{\ k\ l}.
\end{align}
As in \cite{caoqian}, we use the fact that $(\ref{bhct})$ is well defined for $n=3$ to define the Bach tensor in this dimension. More precisely, the Bach tensor of $(M^3,g)$ is defined by
\begin{align}\label{bhctn=3}
	B_{ij}=&\nabla^{k}C_{kij}.
\end{align}
One says that a Riemannian manifold $M^n$ is {\it locally conformally flat} if $n\geq4$ and its Weyl tensor vanishes or, if $n=3$ and its cotton tensor vanishes. $M^n$ is said to be {\it Bach-flat} when its Bach tensor vanishes, $n\geq3$.
Next, we obtain the eigenvalues of the Bach tensor of a $\rho$-Einstein soliton. 

\subsection{Eigenvalues of the Bach tensor of a $\rho$-Einstein soliton}\label{eigenvalue_Bach}
In this subsection we show that $\nabla f$ is an eigenvector of the Bach tensor, at the regular points of $p$. At our first Lemma we rewrite the Bach tensor using the $\rho$-Einstein soliton equation and the definition of the Weyl tensor.

\begin{lemma}\label{lemma1} The Bach tensor of a gradient $\rho$-Einstein soliton $(M^n,g,f,\lambda)$ with $n\geq3$ is given by
		\begin{align}\label{firstbachstep}
			\begin{split}
				(n-2)B=&\frac{2}{n-2}Ric^2+\left[\lambda-\left(\frac{n-(n-1)(n-2)\rho}{(n-1)(n-2)}\right) R\right]Ric\\
				&-\left[\frac{1}{n-2}\left(\vert Ric\vert^2-\frac{R^2}{n-1}\right)+\left(\frac{(1-2(n-1)\rho)}{2(n-1)}\right)\Delta R\right]g\\
				&+\left(\frac{(1-2(n-1)\rho)}{2(n-1)}\right)\nabla^2 R+(div Rm)(\cdot,\nabla f,\cdot),
			\end{split}
		\end{align}
	where $(Ric^2)_{ij}=g^{kl}R_{ik}R_{lj}=R_{i\ }^{\ l}R_{lj}$.
\end{lemma}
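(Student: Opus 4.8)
The plan is to start from the definition of the Bach tensor in terms of the Cotton tensor, namely $(n-2)B_{ij}=\nabla^{k}C_{kij}+R_{kl}W_{i\ j}^{\ k\ l}$, and to compute each of the two summands separately using the soliton equation \eqref{fundeq}. The term $R_{kl}W_{i\ j}^{\ k\ l}$ will be handled algebraically: I would substitute the definition \eqref{weyl} of the Weyl tensor, contract against $R_{kl}$, and collect terms. This contraction produces the Riemann piece $R_{kl}R_{i\ j}^{\ k\ l}$ together with lower-order curvature terms built from $Ric^2$, $|Ric|^2$, $R\,Ric$, and $R^2 g$; the coefficients in the statement involving $\frac{2}{n-2}Ric^2$ and the $g$-terms $\frac{1}{n-2}(|Ric|^2-\frac{R^2}{n-1})$ should emerge from this step after careful bookkeeping of the $\frac{1}{n-2}$ and $\frac{R}{(n-1)(n-2)}$ factors.

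Next I would compute the divergence term $\nabla^{k}C_{kij}$. Using the definition \eqref{cotton} of the Cotton tensor, $\nabla^{k}C_{kij}=\nabla^k\nabla_k R_{ij}-\nabla^k\nabla_i R_{kj}-\frac{1}{2(n-1)}(g_{ij}\Delta R-\nabla_i\nabla_j R)$. The rough Laplacian $\nabla^k\nabla_k R_{ij}$ and the mixed second-derivative term $\nabla^k\nabla_i R_{kj}$ need to be rewritten. Here the soliton equation enters decisively: from \eqref{fundeq} we have $\nabla^2 f=(\rho R+\lambda)g-Ric$, so derivatives of $\nabla^2 f$ can be traded for derivatives of $Ric$ and $R$. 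The key maneuver is to commute covariant derivatives on the third derivative of $f$ via the Riemann identity \eqref{riccident}, which converts commutators into curvature contracted with $\nabla f$. This is precisely where the term $(divRm)(\cdot,\nabla f,\cdot)$ and the radial contribution appear, and where the contracted second Bianchi identity \eqref{divricc} lets me rewrite divergences of $Rm$ in terms of derivatives of $Ric$. The terms linear in $\nabla^2 R$ and $\Delta R$ with coefficient $\frac{1-2(n-1)\rho}{2(n-1)}$ should arise from combining the Cotton-divergence contribution with the trace relation \eqref{trace} and the identity \eqref{IdentitySch}.

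I expect the main obstacle to be the careful commutation of third covariant derivatives of $f$ and the consolidation of all the resulting curvature terms. Specifically, rewriting $\nabla^k\nabla_i R_{kj}$ requires invoking the soliton equation to express $R_{kj}$ via $\nabla^2 f$, differentiating twice, and then commuting to produce both a $(divRm)(\cdot,\nabla f,\cdot)$ contribution and the $Ric^2$ and $R\,Ric$ terms. Managing the many index contractions so that the coefficient of $Ric$, namely $\lambda-\frac{n-(n-1)(n-2)\rho}{(n-1)(n-2)}R$, comes out correctly is the delicate accounting step, because it mixes the genuinely $\rho$-dependent pieces coming from $\rho R g$ in the soliton equation with the $\rho$-independent Weyl contraction. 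Throughout I would use \eqref{Riccidirec} to simplify any term of the form $Ric(\nabla f,\cdot)$ into a multiple of $\nabla R$, which is what ultimately makes the radial direction tractable and sets up the eigenvector conclusion of Proposition~\ref{Propgradeigv} in the following subsection.
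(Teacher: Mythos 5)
Your plan is essentially the paper's own proof: start from $(n-2)B_{ij}=\nabla^{k}C_{kij}+R_{kl}W_{i\ j}^{\ k\ l}$, use the soliton equation \eqref{fundeq} to trade derivatives of $Ric$ for third derivatives of $f$, commute via the Riemann identity \eqref{riccident} so that the Cotton divergence produces $(div Rm)(\cdot,\nabla f,\cdot)+(\rho R+\lambda)Ric-R_{jikl}R^{il}$, and expand the Weyl contraction algebraically from \eqref{weyl} to get the $Ric^2$, $R\,Ric$ and $g$ terms. One small correction to your accounting: the $\nabla^{2}R$ and $\Delta R$ terms with coefficient $\frac{1-2(n-1)\rho}{2(n-1)}$ fall out directly from differentiating the $\rho\,\nabla R\,g$ pieces together with the Cotton normalization terms, so neither \eqref{trace}, \eqref{IdentitySch} nor \eqref{Riccidirec} is needed here --- indeed, invoking \eqref{IdentitySch} would replace $\Delta R$ by other quantities, whereas the stated formula keeps $\Delta R$ explicit (those identities enter only in the later propositions).
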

\begin{proof}
To write the Bach tensor for a $\rho$-Eisntein manifold, we are going to consider its Cotton tensor. Then, from equation \eqref{fundeq} and definition \eqref{cotton}, we get
\begin{align}\label{cotton_rho}
\begin{split}
	C_{ijk}=&\rho \nabla_{i}Rg_{jk}-\nabla_{i}\nabla_{j}\nabla_{k}f-\rho \nabla_{j}Rg_{ik}+\nabla_{j}\nabla_{i}\nabla_{k}f-\frac{1}{2(n-1)}(g_{jk}\nabla_{i}R-g_{ik}\nabla_{j}R)\\
	=&\left(\rho-\frac{1}{2(n-1)}\right)(\nabla_{i}Rg_{jk}-\nabla_{j}Rg_{ik})+\nabla_{j}\nabla_{i}\nabla_{k}f-\nabla_{i}\nabla_{j}\nabla_{k}f\\
	=&-\left(\frac{1-2(n-1)\rho}{2(n-1)}\right)(\nabla_{i}Rg_{jk}-\nabla_{j}Rg_{ik})+R_{jikl}\nabla_{l}f,
\end{split}
\end{align}
where in the last line we have used \eqref{riccident}. Consequently,
\begin{align*}
	\nabla^{i}C_{ijk}=&-\left(\frac{1-2(n-1)\rho}{2(n-1)}\right)(\nabla^{i}\nabla_{i}Rg_{jk}-\nabla^{i}\nabla_{j}Rg_{ik})+\nabla^{i}R_{jikl}\nabla^{l}f+R_{jikl}\nabla^{i}\nabla^{l}f\\	
	=&-\left(\frac{1-2(n-1)\rho}{2(n-1)}\right)(\Delta Rg_{jk}-\nabla_{k}\nabla_{j}R)+\nabla^{i}R_{klji}\nabla^{l}f+R_{jikl}((\rho R+\lambda)g^{il}-R^{il})\\
	=&-\left(\frac{1-2(n-1)\rho}{2(n-1)}\right)(\Delta Rg_{jk}-\nabla_{k}\nabla_{j}R)+(div Rm)_{klj}\nabla^{l}f+(\rho R+\lambda)R_{jk}-R_{jikl}R^{il}.
\end{align*}

Now, from identity \eqref{bhct} when $n\geq4$, we infer
\begin{align}\label{partn1}
	\begin{split}
	(n-2)B_{jk}=&\nabla^{i}C_{ijk}+R_{il}W_{j\ k}^{\ i\ l}\\
	=&-\left(\frac{1-2(n-1)\rho}{2(n-1)}\right)(\Delta Rg_{jk}-\nabla_{k}\nabla_{j}R)+(div Rm)_{klj}\nabla^{l}f\\
	&+(\rho R+\lambda)R_{jk}+R^{il}(W_{jikl}-R_{jikl}).
	\end{split}
\end{align}
For the last term, on the other hand, we obtain from \eqref{weyl} that
\begin{align}\label{part2}
\begin{split}
		R^{il}(W_{jikl}-R_{jikl})=&\frac{1}{n-2}(g_{ik}R^{il}R_{jl}-g_{il}R^{il}R_{jk}-g_{jk}R^{il}R_{il}+g_{jl}R^{il}R_{ik})\\
	&-\frac{R}{(n-1)(n-2)}(g_{ik}g_{jl}R^{il}-g_{il}g_{jk}R^{il})\\
	=&\frac{1}{n-2}(R_{k\ }^{\ l}R_{jl}-RR_{jk}-\vert Ric\vert^2g_{jk}+R^{i\ }_{\ j}R_{ik})\\
	&-\frac{R}{(n-1)(n-2)}(R_{jk}-Rg_{jk})\\
	=&\frac{2}{n-2}(Ric^2)_{jk}-\frac{1}{n-2}\left(1+\frac{1}{n-1}\right)RR_{jk}\\
	&+\frac{1}{n-2}\left(\frac{R^2}{n-1}-\vert Ric\vert^2\right)g_{jk}\\
	=&\frac{2}{n-2}(Ric^2)_{jk}-\frac{n}{(n-1)(n-2)}RR_{jk}\\
	&-\frac{1}{n-2}\left(\vert Ric\vert^2-\frac{R^2}{n-1}\right)g_{jk}.
\end{split}
\end{align}

Inserting \eqref{part2} in \eqref{partn1} and grouping the similar terms, we obtain
\begin{align*}
		(n-2)B_{jk}=&-\left(\frac{1-2(n-1)\rho}{2(n-1)}\right)(\Delta Rg_{jk}-\nabla_{k}\nabla_{j}R)+(div Rm)_{klj}\nabla^{l}f\\
		&+(\rho R+\lambda)R_{jk}+\frac{2}{n-2}(Ric^2)_{jk}-\frac{n}{(n-1)(n-2)}RR_{jk}\\
		&-\frac{1}{n-2}\left(\vert Ric\vert^2-\frac{R^2}{n-1}\right)g_{jk}\\
		=&\frac{2}{n-2}(Ric^2)_{jk}+\left(\rho R+\lambda-\frac{n}{(n-1)(n-2)}R\right)R_{jk}\\
		&-\left(\frac{1}{n-2}\left(\vert Ric\vert^2-\frac{R^2}{n-1}\right)+\left(\frac{1-2(n-1)\rho}{2(n-1)}\right)\Delta R\right)g_{jk}\\
		&+\left(\frac{1-2(n-1)\rho}{2(n-1)}\right)\nabla_{k}\nabla_{j}R+(div Rm)_{klj}\nabla^{l}f,
\end{align*}
which proves \eqref{firstbachstep}.

For the case $n=3$, similar computations using that $W_{jikl}$ vanishes, \eqref{bhctn=3} and \eqref{part2} give the desired result.
\end{proof}

Next, we use \eqref{eigenvRic} to eliminate $\nabla^2R$ and $\Delta R$ from \eqref{firstbachstep}. First we introduce some notation. Recall that $\mathcal{R}\subset M$ is the set of regular points of $f$. If $p\in \mathcal{R}$, then we consider an orthonormal base $\{e_{1},\ldots,e_{n}\}$ of $T_{p}M$ which diagonalizes $Ric$, where we fix
\begin{align}
	e_{1}=\frac{\nabla f(p)}{\vert\nabla f(p)\vert}.
\end{align}
We also write the associated eigenvalues as $\xi_{1},\ldots,\xi_{n}$, that is,
\begin{align}\label{alleig}
	Ric(e_{i},u)=\xi_{i}g(e_{i},u),\ \forall u\in T_{p}M.
\end{align}
With this notation and from \eqref{eigenvRic} we have
\begin{align}\label{xi1}
	\xi_{1}=\frac{1}{2}(1-2(n-1)\rho)\frac{\left\langle\nabla R(p),\nabla f(p)\right\rangle}{\vert\nabla f(p)\vert^2},
\end{align}
that is a smooth function in $\mathcal{R}$.

The goal of the following lemma is to compute $\nabla^2 R$ and $\Delta R$ in terms of $\xi_{1}$.

\begin{lemma}\label{lemmahesslaplR}
	Let $(M^n,g,f,\lambda)$ be a gradient $\rho$-Einstein soliton with $n\geq3$. If $\rho=0$, assume in addition that $dR\wedge df=0$. At any point of $\mathcal{R}$ we have
	\begin{align}
		&\frac{1}{2}(1-2(n-1)\rho)\nabla^2 R=\xi_{1}((\rho R+\lambda)g-Ric)+d\xi_{1}\otimes df\label{firstdereigID}\\
		&\frac{1}{2}(1-2(n-1)\rho)\Delta R=\xi_{1}(n\lambda-(1-n\rho)R)+\left\langle\nabla\xi_{1},\nabla f\right\rangle\label{trfirstdereigID}
	\end{align}
\end{lemma}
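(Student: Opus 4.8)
The plan is to reduce everything to a single first-order identity for $\nabla R$ and then differentiate it exactly once. Write $a:=\tfrac12(1-2(n-1)\rho)$ for brevity, so that by \eqref{xi1} we have $\xi_1=a\,\langle\nabla R,\nabla f\rangle/|\nabla f|^2$. The first step is to establish the pointwise vector identity
\begin{align}\label{keyvec}
a\,\nabla R=\xi_1\,\nabla f \qquad\text{on }\mathcal{R}.
\end{align}
This follows directly from the standing hypotheses: when $\rho\neq0$ the soliton is rectifiable and \eqref{wedgezero} gives $df\wedge dR=0$, whereas when $\rho=0$ we have assumed $dR\wedge df=0$; in either case the computation in the proof of Proposition \ref{eigenvRic_stat} yields \eqref{wedgezero2}, and multiplying \eqref{wedgezero2} by $a$ and invoking the definition of $\xi_1$ produces \eqref{keyvec}. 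Equivalently, \eqref{keyvec} is nothing but the equality of the two expressions for $Ric(\nabla f,\cdot)$ supplied by \eqref{Riccidirec} and \eqref{eigenvRic}.

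With \eqref{keyvec} in hand, the second step is a single covariant differentiation. Since $a$ is constant and \eqref{keyvec} holds on the open set $\mathcal{R}$, differentiating the $1$-form $a\,dR=\xi_1\,df$ in an arbitrary direction gives, in indices,
\begin{align*}
a\,\nabla_i\nabla_j R=(\nabla_i\xi_1)\,\nabla_j f+\xi_1\,\nabla_i\nabla_j f.
\end{align*}
I then replace the Hessian of $f$ by the soliton equation \eqref{fundeq}, namely $\nabla^2 f=(\rho R+\lambda)g-Ric$, and reading the result tensorially yields exactly \eqref{firstdereigID}. Tracing that identity gives \eqref{trfirstdereigID}: the trace of $d\xi_1\otimes df$ is $\langle\nabla\xi_1,\nabla f\rangle$, while the trace of $\xi_1\big((\rho R+\lambda)g-Ric\big)$ equals $\xi_1\big(n(\rho R+\lambda)-R\big)=\xi_1\big(n\lambda-(1-n\rho)R\big)$, which is precisely $\xi_1\,\Delta f$ by \eqref{trace}.

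The only delicate point is a consistency check rather than a genuine obstacle. The left-hand side $a\,\nabla^2 R$ is symmetric, and so is $\xi_1\big((\rho R+\lambda)g-Ric\big)$, yet the term $d\xi_1\otimes df$ is not manifestly symmetric. This is reconciled automatically: taking the exterior derivative of $a\,dR=\xi_1\,df$ gives $d\xi_1\wedge df=0$, so $d\xi_1\otimes df$ is in fact symmetric on $\mathcal{R}$ and no inconsistency occurs. Finally, the degenerate Schouten case $\rho=\tfrac{1}{2(n-1)}$ (where $a=0$) is covered at once, since then $\xi_1\equiv0$ by \eqref{xi1} and both sides of \eqref{firstdereigID} and \eqref{trfirstdereigID} vanish identically.
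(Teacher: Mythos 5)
Your proof is correct and takes essentially the same route as the paper: both rewrite \eqref{wedgezero2} together with \eqref{xi1} as the identity $\tfrac{1}{2}(1-2(n-1)\rho)\nabla R=\xi_{1}\nabla f$ on $\mathcal{R}$, take one covariant derivative, substitute the soliton equation \eqref{fundeq} for $\nabla^{2}f$, and then trace (using \eqref{trace}). Your additional consistency checks --- the symmetry of $d\xi_{1}\otimes df$ via $d\xi_{1}\wedge df=0$, and the degenerate Schouten case $\rho=\tfrac{1}{2(n-1)}$ --- are correct but go beyond what the paper's proof records.
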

\begin{proof}
	First, we notice that \eqref{wedgezero2} can be written as
	\begin{align}\label{wedgezero2'}
		\frac{1}{2}(1-2(n-1)\rho)\nabla R=\xi_{1}\nabla f.
	\end{align}
	Thus, \eqref{firstdereigID} follows from taking the covariant derivative of \eqref{wedgezero2'} and then using \eqref{fundeq}. Tracing \eqref{firstdereigID} we finally get \eqref{trfirstdereigID}.
\end{proof}

As a consequence of the last lemma, we are able to express the Bach tensor of a $\rho$-Einstein soliton only in terms of $div Rm$, $Ric$ and $R$, besides of course of $\xi_{1}$ and its gradient. As we will see ahead, $div Rm(\cdot,\nabla f,\cdot)$ is easily computed in the directions of the eigenvectors of $Ric$, especially in the direction of $\nabla f$, showing the expression below is useful. 

\begin{lemma}
	Let $(M^n,g,f,\lambda)$ be a gradient $\rho$-Einstein soliton with $n\geq3$. If $\rho=0$, assume in addition that $dR\wedge df=0$. Then the following expression holds within $\mathcal{R}$:
	\begin{align}\label{secndbachstep}
		\begin{split}
			(n-2)B=&\frac{2}{n-2}Ric^2+\left[\left(\frac{(n-1)(n-2)\rho-n}{(n-1)(n-2)}\right) R+\lambda-\frac{1}{n-1}\xi_{1}\right]Ric\\
		&+\left[\left(\frac{(1-(n-1)\rho)}{n-1}R-\lambda\right)\xi_{1}-\frac{1}{n-1}\left\langle\nabla\xi_{1},\nabla f\right\rangle-\frac{1}{n-2}\left(\vert Ric\vert^2-\frac{R^2}{n-1}\right)\right]g\\
		&+\frac{1}{n-1}df\otimes d\xi_{1}+(div Rm)(\cdot,\nabla f,\cdot),
		\end{split}
	\end{align}
	where $(Ric^2)_{ij}=g^{kl}R_{ik}R_{lj}=R_{i\ }^{\ l}R_{lj}$.
\end{lemma}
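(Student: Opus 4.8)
The plan is to substitute the expressions for $\nabla^2 R$ and $\Delta R$ obtained in Lemma \ref{lemmahesslaplR} directly into the formula \eqref{firstbachstep} of Lemma \ref{lemma1}, and then regroup the resulting terms. The observation that makes this clean is that the coefficient of both the Hessian and the Laplacian of $R$ in \eqref{firstbachstep} equals $\frac{1-2(n-1)\rho}{2(n-1)}=\frac{1}{n-1}\cdot\frac{1}{2}\bigl(1-2(n-1)\rho\bigr)$, which is precisely $\frac{1}{n-1}$ times the prefactor appearing on the left-hand sides of \eqref{firstdereigID} and \eqref{trfirstdereigID}. Hence the substitution produces no spurious factors of $1-2(n-1)\rho$ and remains valid even in the delicate case where that factor vanishes.

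First I would replace $\frac{1-2(n-1)\rho}{2(n-1)}\nabla^2 R$ using \eqref{firstdereigID}, obtaining $\frac{1}{n-1}\xi_{1}\bigl((\rho R+\lambda)g-Ric\bigr)+\frac{1}{n-1}\,d\xi_{1}\otimes df$. The contribution $-\frac{1}{n-1}\xi_{1}Ric$ merges with the existing $Ric$-coefficient from \eqref{firstbachstep}; since $\frac{(n-1)(n-2)\rho-n}{(n-1)(n-2)}=-\frac{n-(n-1)(n-2)\rho}{(n-1)(n-2)}$, this reproduces the bracket multiplying $Ric$ in \eqref{secndbachstep}. The piece $\frac{1}{n-1}d\xi_{1}\otimes df$ yields the off-diagonal term; note that within $\mathcal{R}$ the symmetry of $\nabla^2 R$ forces $d\xi_{1}\otimes df$ to be symmetric (equivalently $\nabla\xi_{1}\parallel\nabla f$), so writing it as $\frac{1}{n-1}df\otimes d\xi_{1}$ is harmless and keeps $B$ symmetric, as it must be.

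Next I would replace $-\frac{1-2(n-1)\rho}{2(n-1)}\Delta R\,g$ using \eqref{trfirstdereigID}, giving $-\frac{1}{n-1}\bigl[\xi_{1}(n\lambda-(1-n\rho)R)+\langle\nabla\xi_{1},\nabla f\rangle\bigr]g$. The scalar multiplying $g$ then collects three contributions: the $\frac{1}{n-1}\xi_{1}(\rho R+\lambda)$ coming from the Hessian substitution, the Laplacian term just written, and the unchanged curvature term $-\frac{1}{n-2}\bigl(\vert Ric\vert^2-\frac{R^2}{n-1}\bigr)$. The key algebraic step is to combine the two $\xi_{1}$-terms: a direct computation gives $\frac{1}{n-1}\xi_{1}\bigl[(\rho R+\lambda)-(n\lambda-(1-n\rho)R)\bigr]=\xi_{1}\bigl[\frac{1-(n-1)\rho}{n-1}R-\lambda\bigr]$, which is exactly the coefficient of $\xi_{1}$ in the $g$-bracket of \eqref{secndbachstep}, while the leftover $-\frac{1}{n-1}\langle\nabla\xi_{1},\nabla f\rangle$ matches term by term.

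Finally, the terms $\frac{2}{n-2}Ric^2$ and $(div Rm)(\cdot,\nabla f,\cdot)$ are untouched by the substitution and carry over verbatim, so assembling the four pieces yields \eqref{secndbachstep}. I do not expect a genuine obstacle here: the argument is essentially a bookkeeping exercise once Lemmas \ref{lemma1} and \ref{lemmahesslaplR} are in hand, and the only point requiring care is the consolidation of the $R\xi_{1}$ and $\lambda\xi_{1}$ coefficients described above, together with the remark that the rank-one term is symmetric.
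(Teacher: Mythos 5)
Your proposal is correct and follows essentially the same route as the paper's own proof: substitute the expressions for $\nabla^2 R$ and $\Delta R$ from Lemma \ref{lemmahesslaplR} into \eqref{firstbachstep}, merge $-\frac{1}{n-1}\xi_{1}Ric$ into the $Ric$-coefficient, and consolidate the two $\xi_{1}$-terms multiplying $g$ via $\frac{1}{n-1}\bigl[(\rho R+\lambda)-(n\lambda-(1-n\rho)R)\bigr]=\frac{1-(n-1)\rho}{n-1}R-\lambda$, exactly as the paper does. Your extra observation that the symmetry of $\nabla^2 R$ forces $d\xi_{1}\otimes df$ to be symmetric (so it may be written as $df\otimes d\xi_{1}$) is a valid point that the paper leaves implicit.
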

\begin{proof}
	We do not consider in the following computations the first and last terms of the right hand side of \eqref{firstbachstep}, as they will not change. For the rest of the terms denoted by $P$, we use Lemma \ref{lemmahesslaplR} to get:
	\begin{eqnarray*}
		P&:=&\left(\lambda-\left(\frac{n-(n-1)(n-2)\rho}{(n-1)(n-2)}\right) R\right)R_{jk}+\left(\frac{1-2(n-1)\rho}{2(n-1)}\right)\nabla_{k}\nabla_{j}R\\
		&-&\left(\frac{1}{n-2}\left(\vert Ric\vert^2-\frac{R^2}{n-1}\right)+\left(\frac{1-2(n-1)\rho}{2(n-1)}\right)\Delta R\right)g_{jk}\\
		&=&\left(\lambda-\left(\frac{n-(n-1)(n-2)\rho}{(n-1)(n-2)}\right) R\right)R_{jk}\\
		&+&\frac{1}{n-1}\left(\xi_{1}((\rho R+\lambda)g_{kj}-R_{kj})+\nabla_{k}\xi_{1}\nabla_{j}f\right)\\
		&-&\left(\frac{1}{n-2}\left(\vert Ric\vert^2-\frac{R^2}{n-1}\right)+\frac{1}{n-1}(\xi_{1}(n\lambda-(1-n\rho)R)+\left\langle\nabla\xi_{1},\nabla f\right\rangle)\right)g_{jk}\\
		&=&\left(\lambda-\left(\frac{n-(n-1)(n-2)\rho}{(n-1)(n-2)}\right) R-\frac{1}{n-1}\xi_{1}\right)R_{jk}+\frac{1}{n-1}\nabla_{k}\xi_{1}\nabla_{j}f\\
		&+&\frac{1}{n-1}\xi_{1}(\rho R+\lambda)g_{kj}\\
		&-&\left(\frac{1}{n-2}\left(\vert Ric\vert^2-\frac{R^2}{n-1}\right)+\frac{1}{n-1}(\xi_{1}[(n-1)\lambda-(1-(n-1)\rho)R]+\left\langle\nabla\xi_{1},\nabla f\right\rangle)\right)g_{jk}.
	\end{eqnarray*}
	Inserting this in \eqref{firstbachstep} we obtain \eqref{secndbachstep}.
\end{proof}

Having established the lemmas above, we are ready to show that $\nabla f$ is an eigenvector of the Bach tensor within $\mathcal{R}$, and compute the corresponding eigenvalue.

\begin{proposition}\label{Propgradeigv}
	Let $(M^n,g,f,\lambda)$ be a gradient $\rho$-Einstein soliton with $n\geq3$. If $\rho= 0$, assume in addition that $dR\wedge df=0$. Consider the following function, defined at a regular point of $f$
	\begin{align}\label{eigbach}
		\mu=n\xi_{1}^{2}-2R\xi_{1}-((n-1)\vert Ric\vert^2-R^2).
	\end{align}
	At a regular point of $f$, the Bach tensor of $M$ satisfies
\begin{align}\label{gradeigv}
	B(\nabla f,X)=\frac{\mu}{(n-1)(n-2)^2}g(\nabla f,X),\ \ \forall X\in\mathfrak{X}(M).
\end{align}
\end{proposition}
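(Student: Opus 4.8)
The plan is to obtain \eqref{gradeigv} by contracting the identity \eqref{secndbachstep} for $(n-2)B_{jk}$ with $\nabla^{k}f$, that is, by computing $(n-2)B_{jk}\nabla^{k}f\,X^{j}$ for an arbitrary $X$. The decisive observation concerns the last term $(div Rm)_{klj}\nabla^{l}f$: by \eqref{divricc} (equivalently, by the antisymmetry of $R_{ijkl}$ in $i,j$), $div Rm$ is antisymmetric in its first two indices $k,l$. Hence its contribution $(div Rm)_{klj}\nabla^{l}f\,\nabla^{k}f\,X^{j}$ vanishes identically, because the first two indices are contracted against the same vector $\nabla f$. Choosing this particular contraction is what lets me avoid expanding $div Rm(\cdot,\nabla f,\cdot)$ explicitly, which would otherwise force me to differentiate the relation $Ric(\nabla f)=\xi_{1}\nabla f$ and wrestle with genuine derivatives of $Ric$.

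With that term annihilated, the remaining pieces of \eqref{secndbachstep} are read off using Proposition \ref{eigenvRic_stat}: at a regular point $\nabla f$ is an eigenvector of $Ric$ with eigenvalue $\xi_{1}$ as in \eqref{xi1}, hence an eigenvector of $Ric^{2}$ with eigenvalue $\xi_{1}^{2}$. Thus, after contracting with $\nabla^{k}f\,X^{j}$, the terms $\frac{2}{n-2}(Ric^{2})_{jk}$, the coefficient times $R_{jk}$, and the coefficient times $g_{jk}$ become $\frac{2}{n-2}\xi_{1}^{2}$, the coefficient times $\xi_{1}$, and the coefficient, each multiplied by $g(\nabla f,X)$. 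The term $\frac{1}{n-1}\nabla_{j}f\nabla_{k}\xi_{1}$ contributes $\frac{1}{n-1}\langle\nabla\xi_{1},\nabla f\rangle\,g(\nabla f,X)$, since the index $j$ carrying $df$ pairs with $X$ and the index $k$ carrying $d\xi_{1}$ pairs with $\nabla f$. The key point is that this precisely cancels the $-\frac{1}{n-1}\langle\nabla\xi_{1},\nabla f\rangle$ already sitting inside the coefficient of $g$ in \eqref{secndbachstep}. Consequently no derivative of $\xi_{1}$ survives, and $(n-2)B_{jk}\nabla^{k}f\,X^{j}$ is a scalar multiple of $g(\nabla f,X)$, which already shows $\nabla f$ is an eigenvector of $B$.

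It then remains only to identify the eigenvalue, a short algebraic simplification of the scalar coefficient of $g(\nabla f,X)$. Collecting terms, the $\lambda\xi_{1}$ contributions cancel; the $\xi_{1}^{2}$ contributions (coming from $Ric^{2}$ and from the $-\frac{1}{n-1}\xi_{1}$ inside the $Ric$-coefficient) combine into $\frac{n}{(n-1)(n-2)}\xi_{1}^{2}$; the $R\xi_{1}$ contributions combine into $-\frac{2}{(n-1)(n-2)}R\xi_{1}$; and the remaining curvature terms combine into $-\frac{(n-1)|Ric|^{2}-R^{2}}{(n-1)(n-2)}$. Pulling out the common factor $\frac{1}{(n-1)(n-2)}$ reproduces exactly $\mu$ from \eqref{eigbach}, giving $(n-2)B(\nabla f,X)=\frac{\mu}{(n-1)(n-2)}\,g(\nabla f,X)$. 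Dividing by $n-2$ yields \eqref{gradeigv}. The same argument covers $n=3$, since \eqref{secndbachstep} was established for all $n\geq3$.

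The main obstacle is conceptual rather than computational: one must resist expanding $div Rm(\cdot,\nabla f,\cdot)$ and instead exploit the interplay between the symmetry of $B$ and the antisymmetry of $div Rm$ to select the contraction that kills it. With the opposite contraction the $div Rm$ term survives, and one is dragged into differentiating the eigenvalue relation for $Ric$, where the cancellations of the terms involving $\nabla\xi_{1}$ and $\nabla Ric$ are far less transparent; the clean route makes the eigenvalue $\mu$ appear directly.
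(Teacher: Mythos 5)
Your proposal is correct and takes essentially the same route as the paper: contract \eqref{secndbachstep} with $\nabla f$, annihilate the $(div Rm)(\cdot,\nabla f,\cdot)$ term through the antisymmetry of $div Rm$ in its first two indices, use $Ric(\nabla f)=\xi_{1}\nabla f$ on the remaining terms, and simplify the scalar coefficient to $\tfrac{\mu}{(n-1)(n-2)}$. The only cosmetic difference is in the bookkeeping of the non-symmetric term $df\otimes d\xi_{1}$: you pair $\nabla f$ with the $d\xi_{1}$ slot, so you never need $df\wedge d\xi_{1}=0$, whereas the paper pairs $\nabla f$ with the $df$ slot and then invokes $df\wedge d\xi_{1}=0$ (from \eqref{wedgezero2'}); by the symmetry of $B$ both choices yield the same result.
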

\begin{proof}
	We will use \eqref{secndbachstep} to compute $B_{kj}\nabla^{k}f$. First notice that
	\begin{align*}
		(div Rm)_{klj}\nabla^{l}f\nabla^{k}f=(div Rm)(\nabla f,\nabla f,\partial_{j})=0,
	\end{align*}
	due to the Second Bianchi identity. Thus, using that $R_{kj}\nabla^{k}f=\xi_{1}\nabla_{j}f$, we have
	\begin{align*}
			(n-2)B_{kj}\nabla^{k}f=&\frac{2}{n-2}(Ric^2)_{kj}\nabla^{k}f+\left[\left(\frac{(n-1)(n-2)\rho-n}{(n-1)(n-2)}\right) R+\lambda-\frac{1}{n-1}\xi_{1}\right]R_{kj}\nabla^{k}f\\
			&+\left[\left(\frac{(1-(n-1)\rho)}{n-1}R-\lambda\right)\xi_{1}-\frac{1}{n-1}\left\langle\nabla\xi_{1},\nabla f\right\rangle-\frac{1}{n-2}\left(\vert Ric\vert^2-\frac{R^2}{n-1}\right)\right]\nabla_{j}f\\
			&+\frac{1}{n-1}\nabla_{k}f\nabla_{j}\xi_{1}\nabla^{k}f\\
			=&\frac{2}{n-2}\xi_{1}^2\nabla_{j}f+\left[\left(\frac{(n-1)(n-2)\rho-n}{(n-1)(n-2)}\right) R+\lambda-\frac{1}{n-1}\xi_{1}\right]\xi_{1}\nabla_{j}f\\
			&+\left[\left(\frac{(1-(n-1)\rho)}{n-1}R-\lambda\right)\xi_{1}-\frac{1}{n-1}\left\langle\nabla\xi_{1},\nabla f\right\rangle-\frac{1}{n-2}\left(\vert Ric\vert^2-\frac{R^2}{n-1}\right)\right]\nabla_{j}f\\
			&+\frac{1}{n-1}\nabla_{j}f\nabla_{k}\xi_{1}\nabla^{k}f\\
			=&\left[\frac{n}{(n-1)(n-2)}\xi_{1}^2+\left[\left(\frac{(n-1)(n-2)\rho-n}{(n-1)(n-2)}\right) R+\lambda\right]\xi_{1}\right]\nabla_{j}f\\
			&+\left[\left(\frac{(1-(n-1)\rho)}{n-1}R-\lambda\right)\xi_{1}-\frac{1}{n-1}\left\langle\nabla\xi_{1},\nabla f\right\rangle-\frac{1}{n-2}\left(\vert Ric\vert^2-\frac{R^2}{n-1}\right)\right]\nabla_{j}f\\
			&+\frac{1}{n-1}\left\langle\nabla\xi_{1},\nabla f\right\rangle\nabla_{j}f\\
			=&\left[\frac{n}{(n-1)(n-2)}\xi_{1}^2+\left[\left(\frac{(n-1)(n-2)\rho-n}{(n-1)(n-2)}\right) R+\lambda\right]\xi_{1}\right]\nabla_{j}f\\
			&+\left[\left(\frac{(1-(n-1)\rho)}{n-1}R-\lambda\right)\xi_{1}-\frac{1}{n-2}\left(\vert Ric\vert^2-\frac{R^2}{n-1}\right)\right]\nabla_{j}f\\
			=&\frac{1}{(n-1)(n-2)}\left[n\xi_{1}^2-2R\xi_{1}-\left((n-1)\vert Ric\vert^2-R^2\right)\right]\nabla_{j}f,
	\end{align*}
	where in the second line we have used that $df\wedge d\xi_{1}=0$ in the last term, which follows form \eqref{wedgezero2'}. Once $\mu$ is defined as in \eqref{eigbach}, we finally get the desired result.
\end{proof}

\begin{remark}
    In \cite[Proposition 4.1]{borges2}, the second author proved that $\nabla f$ is an eigenvector of the Bach tensor of a Schouten soliton, whose eigenvalue is given by \eqref{identschb=0}. Proposition \ref{Propgradeigv} extends this result for $\rho$-Einstein solitons, once when $\rho=1/2(n-1)$, we have $\xi_{1}=0$, and  \eqref{eigbach} becomes \eqref{identschb=0}.
\end{remark}

\section{Local structure of $\rho$-Einstein solitons with $B(\nabla f,\nabla f)\geq0$}\label{locstruc}

In this section, we deepen the understanding of the expression \eqref{eigbach}, which describes the eigenvalue corresponding to $\nabla f$. In the first result, Theorem \ref{proptwoeigen}, we show this eigenvalue has a sign, and when $B(\nabla f,\nabla f)$ is nonnegative, then we show in Proposition \ref{local_warp} that these $\rho$-Einstein solitons are locally warped products, which allows us to show that these solitons must be rotationally symmetric in low dimensions $n\in\{3,4\}$.

\begin{theorem}\label{proptwoeigen}
		Let $(M^n,g,f,\lambda)$ be a nontrivial gradient $\rho$-Einstein soliton with $n\geq3$. If $\rho= 0$, assume in addition that $dR\wedge df=0$. Then the eigenvalue corresponding to $\nabla f(p)\neq0$ satisfies
        \begin{align*}
            \mu=\left(\sum_{i=2}^{n}\xi_{i}\right)^2-(n-1)\sum_{i=2}^{n}\xi_{i}^2\leq0,
        \end{align*}
        where $\xi_{1},\xi_{2},\ldots,\xi_{n}$ are the eigenvalues of $Ric$. Furthermore, if $B(\nabla f,\nabla f)\geq0$ in $M$, then $Ric$ has at most two eigenvalues $\xi_{1}$ and $\xi_{2}$ in $\mathcal{R}$, which are smooth, constant on the regular levels of $f$, and have constant multiplicities $1$ and $n-1$, respectively, in this set.
\end{theorem}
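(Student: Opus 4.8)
The plan is to first reduce the expression \eqref{eigbach} for $\mu$ to the claimed symmetric form and then read off its sign from a Cauchy--Schwarz inequality. Since $\{e_{1},\dots,e_{n}\}$ diagonalizes $Ric$ with eigenvalues $\xi_{1},\dots,\xi_{n}$, we have $R=\sum_{i=1}^{n}\xi_{i}$ and $|Ric|^{2}=\sum_{i=1}^{n}\xi_{i}^{2}$. Substituting these into
\begin{align*}
	\mu=n\xi_{1}^{2}-2R\xi_{1}-\big((n-1)|Ric|^{2}-R^{2}\big)
\end{align*}
and writing $R=\xi_{1}+\sum_{i=2}^{n}\xi_{i}$, $|Ric|^{2}=\xi_{1}^{2}+\sum_{i=2}^{n}\xi_{i}^{2}$, I expect every term involving $\xi_{1}$ to cancel, leaving exactly
\begin{align*}
	\mu=\Big(\sum_{i=2}^{n}\xi_{i}\Big)^{2}-(n-1)\sum_{i=2}^{n}\xi_{i}^{2}.
\end{align*}
This is the routine but essential computation. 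The inequality $\mu\le0$ is then immediate from the Cauchy--Schwarz inequality applied to $(\xi_{2},\dots,\xi_{n})$ and $(1,\dots,1)$ in $\mathbb{R}^{n-1}$, namely $\big(\sum_{i=2}^{n}\xi_{i}\big)^{2}\le(n-1)\sum_{i=2}^{n}\xi_{i}^{2}$.

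For the second assertion I would combine $\mu\le0$ with the hypothesis $B(\nabla f,\nabla f)\ge0$. By Proposition \ref{Propgradeigv}, at each point of $\mathcal{R}$ one has $B(\nabla f,\nabla f)=\frac{\mu}{(n-1)(n-2)^{2}}|\nabla f|^{2}$; since $|\nabla f|^{2}>0$ on $\mathcal{R}$, nonnegativity of $B(\nabla f,\nabla f)$ forces $\mu\ge0$, hence $\mu=0$ throughout $\mathcal{R}$. Equality in the Cauchy--Schwarz inequality above holds precisely when $(\xi_{2},\dots,\xi_{n})$ is proportional to $(1,\dots,1)$, i.e. $\xi_{2}=\xi_{3}=\cdots=\xi_{n}$. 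Thus on $\mathcal{R}$ the Ricci tensor carries the eigenvalue $\xi_{1}$ along $\nabla f$ and a single eigenvalue $\xi_{2}=\xi_{3}=\cdots=\xi_{n}$ on the orthogonal complement $(\nabla f)^{\perp}$, yielding at most two distinct eigenvalues and the eigenspace splitting $T_{p}M=\mathbb{R}\nabla f\oplus(\nabla f)^{\perp}$ of dimensions $1$ and $n-1$.

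It then remains to establish smoothness and constancy along the regular levels. Smoothness of $\xi_{1}$ follows from its explicit formula \eqref{xi1}, and then $\xi_{2}=\frac{R-\xi_{1}}{n-1}$, obtained from the trace $R=\xi_{1}+(n-1)\xi_{2}$, is smooth as well. For constancy on level sets I would use that $dR\wedge df=0$ (automatic when $\rho\neq0$ by \eqref{wedgezero}, assumed when $\rho=0$) makes $\nabla R$ parallel to $\nabla f$ on $\mathcal{R}$, so $R$ is constant on connected components of regular level sets; locally, where $f$ is a submersion, this gives $R=R(f)$. Inserting $\nabla R=R'(f)\nabla f$ into \eqref{wedgezero2'} yields $\xi_{1}=\frac{1}{2}(1-2(n-1)\rho)R'(f)$ (with $\xi_{1}\equiv0$ in the Schouten case $\rho=\frac{1}{2(n-1)}$), a function of $f$ alone and hence constant on each regular level; consequently $\xi_{2}=\frac{R-\xi_{1}}{n-1}$ is constant there too. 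The splitting above shows the multiplicities are uniformly $1$ and $n-1$ on $\mathcal{R}$.

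I expect the main obstacle to be the constancy along the regular levels rather than the sign of $\mu$, since the latter is pure linear algebra. Constancy hinges on promoting the pointwise relation $dR\wedge df=0$ to the statement that $R$, and through it the eigenvalues, depend only on $f$ near a regular point. The only genuine difference between $\rho\neq0$ and $\rho=0$ lies in the source of $dR\wedge df=0$: it is automatic from \eqref{wedgezero} when $\rho\neq0$, whereas for $\rho=0$ it is the standing hypothesis and, via Hamilton's identity \eqref{hamidenEQ}, is equivalent to rectifiability, as recorded in Remark \ref{remarkRectf_rho0}.
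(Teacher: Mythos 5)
Your proposal is correct and follows essentially the same route as the paper: the same substitution $R=\sum_{i}\xi_{i}$, $\vert Ric\vert^{2}=\sum_{i}\xi_{i}^{2}$ reducing $\mu$ to $\bigl(\sum_{i\geq2}\xi_{i}\bigr)^{2}-(n-1)\sum_{i\geq2}\xi_{i}^{2}$ (the $\xi_{1}$-terms do cancel exactly as you anticipate), the same Cauchy--Schwarz inequality with its equality case, and the same appeal to Proposition \ref{Propgradeigv} to force $\mu=0$ when $B(\nabla f,\nabla f)\geq0$, followed by the same smoothness argument via \eqref{xi1} and the trace identity. The only cosmetic difference is the constancy step: the paper shows $V(\xi_{1})=V(R)=0$ for $V$ tangent to a regular level set directly from \eqref{wedgezero2'}, whereas you write $R=R(f)$ locally and read off $\xi_{1}=\tfrac{1}{2}(1-2(n-1)\rho)R'(f)$ --- the two arguments are equivalent.
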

\begin{proof}
    First of all, from \eqref{alleig}, we get
	\begin{align}\label{incoord}
		\vert Ric\vert^2=\sum_{i=1}^{n}\xi_{i}^2\ \ \ \text{and}\ \ \ R=\sum_{i=1}^{n}\xi_{i}.
	\end{align}
    Now, using \eqref{eigbach} we obtain
    \begin{align*}
		\mu&=n\xi_{1}^{2}-2R\xi_{1}-((n-1)\vert Ric\vert^2-R^2)\\
        &=n\xi_{1}^{2}-2\left(\sum_{i=1}^{n}\xi_{i}\right)\xi_{1}-(n-1)\sum_{i=1}^{n}\xi_{i}^2+\left(\sum_{i=1}^{n}\xi_{i}\right)^2\\
        &=(n-2-n+1)\xi_{1}^{2}-2\left(\sum_{i=2}^{n}\xi_{i}\right)\xi_{1}-(n-1)\sum_{i=2}^{n}\xi_{i}^2+\left(\sum_{i=2}^{n}\xi_{i}\right)^2+2\left(\sum_{i=2}^{n}\xi_{i}\right)\xi_{1}+\xi_{1}^{2}\\
        &=-\xi_{1}^{2}-(n-1)\sum_{i=2}^{n}\xi_{i}^2+\left(\sum_{i=2}^{n}\xi_{i}\right)^2+\xi_{1}^{2}\\
        &=\left(\sum_{i=2}^{n}\xi_{i}\right)^2-(n-1)\sum_{i=2}^{n}\xi_{i}^2.
	\end{align*}
    On the other hand, using the Cauchy-Schwarz inequality, we get
    \begin{align*}
		\mu=\left(\sum_{i=2}^{n}\xi_{i}\right)^2-(n-1)\sum_{i=2}^{n}\xi_{i}^2\leq(n-1)\sum_{i=2}^{n}\xi_{i}^2-(n-1)\sum_{i=2}^{n}\xi_{i}^2=0.
	\end{align*}
    As a consequence, if $B(\nabla f,\nabla f)\geq0$, then $\mu=0$. The equality in Cauchy-Schwarz inequality now guarantees that $\xi_{2}=\cdots=\xi_{n}$.
	
	Now we will show that $\xi_{1}$ and $\xi_{2}$ are smooth functions on the set of regular points of $f$. For $\xi_{1}$ this follows immediately from \eqref{xi1}, and for $\xi_{2}$ this follows from the identity
	\begin{align*}
		R=\xi_{1}+(n-1)\xi_{2}.
	\end{align*}
	
	To see that $\xi_{1}$ and $\xi_{2}$ are constant on the regular levels of $f$, consider $V\in\mathfrak{X}(M)$ so that $V\perp\nabla f$. First, notice that \eqref{wedgezero2'} implies $V(\xi_{1})=0$. On the other hand, as $V(R)=0$, we get $(n-1)V(\xi_{2})=V(R)-V(\xi_{1})=0$. This shows that $\xi_{1}$ and $\xi_{2}$ are constant on the regular levels of $f$.
\end{proof}

It is worth pointing out the following consequence of Theorem \ref{proptwoeigen}, which has independent interest.
\begin{corollary}\label{signRScase}
    Let $(M^n,g,f,\lambda)$ be a nontrivial gradient Ricci soliton which is rectifiable. Then
     \begin{align*}
      B(\nabla f,\nabla f)=\frac{\mu}{(n-1)(n-2)^2}\vert\nabla f\vert^2\leq0.
  \end{align*}
\end{corollary}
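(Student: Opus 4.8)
The plan is to recognize that a gradient Ricci soliton is exactly the special case $\rho=0$ of a gradient $\rho$-Einstein soliton, since setting $\rho=0$ in \eqref{fundeq} recovers the Ricci soliton equation $Ric+\nabla^2f=\lambda g$. Consequently, the entire content of the corollary should follow by specializing Proposition \ref{Propgradeigv} and Theorem \ref{proptwoeigen} to $\rho=0$. The one preliminary point to settle is that the hypotheses of those two results, namely $dR\wedge df=0$, are in force. This is precisely where the rectifiability assumption enters: by Remark \ref{remarkRectf_rho0}, rectifiability of a Ricci soliton is equivalent to $df\wedge dR=0$, so both results are applicable.

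With this in hand, I would fix an arbitrary regular point $p$ of $f$ and apply Proposition \ref{Propgradeigv} with $X=\nabla f$, which immediately yields the identity
\[
	B(\nabla f,\nabla f)=\frac{\mu}{(n-1)(n-2)^2}\vert\nabla f\vert^2
\]
on the set $\mathcal{R}$ of regular points. Theorem \ref{proptwoeigen} then supplies the sign: at each such point the eigenvalue $\mu$ associated with $\nabla f$ equals $\big(\sum_{i=2}^{n}\xi_i\big)^2-(n-1)\sum_{i=2}^{n}\xi_i^2$, which is nonpositive by the Cauchy--Schwarz inequality. Since $(n-1)(n-2)^2>0$ and $\vert\nabla f\vert^2>0$ on $\mathcal{R}$, this forces $B(\nabla f,\nabla f)\leq0$ throughout $\mathcal{R}$.

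To conclude on all of $M$, I would dispose of the critical points of $f$. Wherever $\nabla f=0$, both sides of the displayed identity vanish --- the left because $B(0,0)=0$ and the right because the factor $\vert\nabla f\vert^2$ is zero --- so the identity and the inequality hold there trivially, even though $\mu$ itself is only defined on $\mathcal{R}$. I do not anticipate any genuine obstacle here: the substantive work is already carried out in Proposition \ref{Propgradeigv} and Theorem \ref{proptwoeigen}, and the only matters deserving attention are the translation of rectifiability through Remark \ref{remarkRectf_rho0} and the harmless verification at critical points.
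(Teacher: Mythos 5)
Your proposal is correct and follows exactly the route the paper intends: the corollary is stated there as an immediate consequence of Proposition \ref{Propgradeigv} and Theorem \ref{proptwoeigen} applied with $\rho=0$, where rectifiability supplies the hypothesis $df\wedge dR=0$ via Remark \ref{remarkRectf_rho0} (Hamilton's identity). Your additional observation that both sides vanish trivially at critical points of $f$ is a harmless and accurate tidying-up of a detail the paper leaves implicit.
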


\begin{remark}
    Rectifiable Ricci solitons were considered in \cite{petersen2}. Among these solitons, are those with constant scalar curvature, studied in \cite{ferlo}. It should be pointed out that there are numerous examples of rectifiable Ricci solitons with nonconstant scalar curvature, built on cohomogeneity one manifolds. These examples can be found in \cite{bdw,bdw2,dw,dw2} and in references therein.
\end{remark}

A consequence of Theorem \ref{proptwoeigen} is the following result, which decomposes the metric locally as a warped product between an interval and an Einstein manifold.

\begin{proposition}\label{local_warp}
	Let $(M^n,g,f,\lambda)$ be a nontrivial gradient $\rho$-Einstein soliton with $n\geq3$ and $B(\nabla f,\nabla f)\geq0$ on $M$. If $\rho=0$, assume in addition that $dR\wedge df=0$. Given a regular point $p\in M$, there are an open set $U\subset M$, an Einstein manifold $(F^{n-1},g_{_{F}})$ and a positive smooth function $h:I\rightarrow\mathbb{R}$, so that
	\begin{align}\label{locdecomp}
		U=I\times F^{n-1}\ \ \ \text{and}\ \ \ g\vert_{_{U}}=dt^2+h^2g_{_{F}},
	\end{align}
	where $I$ is an interval.
\end{proposition}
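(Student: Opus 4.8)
The plan is to read off the geometry of the level sets of $f$ from the soliton equation and from the two-eigenvalue structure supplied by Theorem \ref{proptwoeigen}, and then to invoke the standard characterization of warped products by umbilical foliations. Throughout I work in the neighborhood $U\subset\mathcal{R}$ of the regular point $p$, where the implicit function theorem guarantees that the level sets $\Sigma_c=f^{-1}(c)$ are smooth hypersurfaces foliating $U$, with unit normal $N=\nabla f/|\nabla f|$.

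First I would rewrite \eqref{fundeq} as $\nabla^2 f=(\rho R+\lambda)g-Ric$ and feed in the conclusion of Theorem \ref{proptwoeigen}: since $B(\nabla f,\nabla f)\geq0$, the tensor $Ric$ has on $\mathcal{R}$ exactly the eigenvalue $\xi_{1}$ in the direction $\nabla f$ and the eigenvalue $\xi_{2}$ with multiplicity $n-1$ on $\nabla f^{\perp}$, both smooth and constant on the level sets of $f$. Hence, for $X,Y$ tangent to $\Sigma_c$,
\begin{align*}
\nabla^2 f(\nabla f,\cdot)=(\rho R+\lambda-\xi_{1})\,df,\qquad \nabla^2 f(X,Y)=(\rho R+\lambda-\xi_{2})\,g(X,Y),
\end{align*}
with no mixed terms, because $Ric(\nabla f,X)=\xi_{1}g(\nabla f,X)=0$ by Proposition \ref{eigenvRic_stat}. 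The first identity shows $\nabla_{\nabla f}\nabla f=(\rho R+\lambda-\xi_{1})\nabla f$ is a multiple of $\nabla f$, so $N$ is a unit geodesic field; the second shows that the second fundamental form of $\Sigma_c$ is $II=\tfrac{1}{|\nabla f|}\nabla^2 f|_{T\Sigma}=\eta\,g|_{T\Sigma}$, i.e. each $\Sigma_c$ is totally umbilical with umbilicity factor $\eta=(\rho R+\lambda-\xi_{2})/|\nabla f|$.

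The decisive point is that $\eta$ is constant on each $\Sigma_c$, and this is where rectifiability enters. When $\rho\neq0$ the soliton is rectifiable by \cite{catino}, so $|\nabla f|$ is constant on the regular level sets; when $\rho=0$, the extra hypothesis $dR\wedge df=0$ makes $R$ constant on level sets, and then Hamilton's identity \eqref{hamidenEQ} forces $|\nabla f|^2=C+2\lambda f-R$ to be constant there as well. Since $R$, $\xi_{1}$, $\xi_{2}$ and $|\nabla f|$ are thus all constant on the level sets, so is $\eta$. I would then let $t$ be the arclength parameter of the geodesic field $N$ (equivalently $dt=df/|\nabla f|$); near $p$ the level sets of $f$ are exactly the slices $\{t=\text{const}\}$, and $R,\xi_{1},\xi_{2},|\nabla f|,\eta$ all become functions of $t$ alone.

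With $N$ a unit geodesic field orthogonal to a totally umbilical foliation whose umbilicity factor $\eta=\eta(t)$ depends only on the transverse parameter, the metric is locally a warped product: in coordinates $(t,x^{2},\dots,x^{n})$ adapted to the foliation one computes $\partial_{t}g_{ij}=2\eta(t)g_{ij}$ for the slice components, whence $g_{ij}(t,x)=h(t)^2(g_{F})_{ij}(x)$ with $h(t)=\exp\!\int\eta\,dt$ and $g_{F}$ the induced metric of a fixed slice. This yields \eqref{locdecomp}. Finally, $F$ is Einstein: substituting this form into the warped-product expression of the Ricci tensor in fiber directions gives $Ric_{F}(X,Y)=\big[\xi_{2}h^2+hh''+(n-2)(h')^2\big]g_{F}(X,Y)$ for $X,Y$ tangent to $F$, where the bracket is a function of $t$ only while $Ric_{F}$ is independent of $t$; hence the bracket is constant and $Ric_{F}=c\,g_{F}$. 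I expect the main obstacle to be the fourth step, namely turning the umbilical foliation into an honest warped (rather than merely twisted) product, which is precisely what the $t$-dependence of $\eta$ — itself a consequence of rectifiability — secures; the Einstein-fiber conclusion is then a formal consequence.
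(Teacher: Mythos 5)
Your proposal is correct and takes essentially the same route as the paper's proof: the two-eigenvalue structure of Theorem \ref{proptwoeigen} makes the regular level sets totally umbilical with umbilicity factor $(\rho R+\lambda-\xi_{2})/\vert\nabla f\vert$ constant along each level (via rectifiability, or Hamilton's identity \eqref{hamidenEQ} when $\rho=0$, exactly as in Remark \ref{remarkRectf_rho0}), integrating $\partial_{t}g_{ab}=2\eta(t)g_{ab}$ yields the warped-product form, and the O'Neill fiber formula forces $F$ to be Einstein. The only cosmetic differences are your explicit geodesic-normal/umbilical-foliation framing and your use of $Ric=\xi_{2}g$ on the fibers in place of the paper's substitution of the soliton equation in \eqref{rcexpwp}.
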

\begin{proof}
	Set $F^{n-1}$ as the connected component of $f^{-1}(f(p))$ containing $p$, and consider in it the induced metric, which we denote by $g_{_{F}}$. Let $U\subset M$ be diffeomorphic to $I\times F^{n-1}$ with $p\in U$, where $I$ is the connected integral curve of $\frac{\nabla f}{\vert\nabla f\vert}$ through $p$. Notice that $\vert\nabla f(q)\vert^2\neq0$, $\forall q\in U$. We will simply write $U=I\times F^{n-1}$.
	
	Let $(s,x_{2},\ldots,x_{n})\in U$, where $(x_{2},\ldots,x_{n})$ are coordinates of $F^{n-1}$ and $s$ is the arc length parameter of $I$. Notice that $F^{n-1}=\{(t,x_{2},\ldots,x_{n})\in U\vert s=0\}$, $\partial_{t}=\frac{\nabla f}{\vert\nabla f\vert}$ and $g(\partial_{t},\partial_{a})=0,\ \forall a\in\{2,\ldots,n\}$. Furthermore, Theorem \ref{proptwoeigen} asserts that $Ric(\partial_{a},X)=\xi_{2}g(\partial_{a},X),\ \forall X\in\mathfrak{X}(M)$ and $\forall a\in\{2,\ldots,n\}$.
	
	Placing $\partial_{a}$ and $\partial_{b}$ in \eqref{fundeq}, and using $\nabla f=\vert\nabla f\vert\partial_{t}$, we obtain
	\begin{align*}
		\vert\nabla f\vert g(\nabla_{\partial_{a}}\partial_{t},\partial_{b})&=g(\nabla_{\partial_{a}}(\vert\nabla f\vert\partial_{t}),\partial_{b})\\
		&=g(\nabla_{\partial_{a}}\nabla f,\partial_{b})\\
		&=\left(\rho R+\lambda-\xi_{2}\right)g_{ab},
	\end{align*}
	and thus,
	\begin{align}\label{toint}
		\begin{split}
			\partial_{t}g_{ab}&=g(\nabla_{\partial_{t}}\partial_{a},\partial_{b})+g(\partial_{a},\nabla_{\partial_{t}}\partial_{b})\\
		&=g(\nabla_{\partial_{a}}\partial_{t},\partial_{b})+g(\partial_{a},\nabla_{\partial_{b}}\partial_{t})\\
		&=2\left(\frac{\rho R+\lambda-\xi_{2}}{\vert\nabla f\vert}\right)g_{ab}.
		\end{split}
	\end{align}
	Now, Theorem \ref{proptwoeigen} assures that $R$, $\xi_{2}$, $f$ and
	\begin{align*}
		u=\frac{\rho R+\lambda-\xi_{2}}{\vert\nabla f\vert}
	\end{align*}
	are functions depending only on $t$ within $U$. Integrating $\partial_{t}g_{ab}=2u(t)g_{ab}$, we get
	\begin{align}
			g_{ab}(t,x_{2},\ldots,x_{n})=(h(t))^{2}(g_{_{F}})_{ab}(x_{2},\ldots,x_{n}),
	\end{align}
	where $h:I\rightarrow\mathbb{R}$ is given by $h(t)=e^{\int_{0}^{t}u(y)dy}$ and
	\begin{align*}
		(g_{_{F}})_{ab}(x_{2},\ldots,x_{n})=g_{ab}(0,x_{2},\ldots,x_{n})
	\end{align*}
	is the metric of $F^{n-1}$, induced by the metric of $M$.
	
	Using \eqref{fundeq} and the equations for the Ricci tensor and covariant derivative of a warped product, which can be found in Chapter 7 of \cite{oneill}, we get
	\begin{align}\label{rcexpwp}
		(Ric_{F})_{ab}=(hh''+(n-2)(h')^2-hh'f'+(\rho R+\lambda)h^2)(g_{F})_{ab},
	\end{align}
	for all $a,b\in\{2,\ldots,n\}$, where $Ric_{F}$ is the Ricci curvature of $F^{n-1}$, and the prime $(\cdot)'$ is the derivative with respect to $t$. Once the function on the right hand side of \eqref{rcexpwp} multiplying $g_{F}$ depends only on $t$, we conclude that $(F^{n-1},g_{F})$ is Einstein.	
\end{proof}

\begin{proof}[{\bf Proof of Theorem \ref{main_thm}}]
    Assume $B(\nabla f,\nabla f)\geq0$ and let $p\in\mathcal{R}$. By Proposition \ref{local_warp}, there are an open set $U\subset M$ containing $p$, an Einstein manifold $(F^{n-1},g_{_{F}})$ and a positive smooth function $h:I\rightarrow\mathbb{R}$, so that \eqref{locdecomp} holds true, where $I$ is an interval. The proof is complete if $\rho\neq0$. If $\rho=0$, it follows from Remark \ref{remarkRectf_rho0}, we see that $df\wedge dR=0$ is equivalent to rectifiability, finishing the proof.
\end{proof}

\begin{corollary}\label{wey_har}
     Let $(M^n,g,f,\lambda)$ be a nontrivial gradient $\rho$-Einstein soliton with $n\geq3$ and $B(\nabla f,\nabla f)\geq0$ on $M$. If $\rho= 0$, assume in addition that $dR\wedge df=0$. Then $M$ is locally conformally flat if $n\in\{3,4\}$ and is Weyl harmonic if $n\geq5$.
\end{corollary}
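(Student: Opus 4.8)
The plan is to push the local warped-product structure already established for these solitons into an explicit computation of the Cotton and Weyl tensors. By Theorem~\ref{main_thm} (equivalently Proposition~\ref{local_warp}), around any $p\in\mathcal{R}$ the metric splits as $g|_U=dt^2+h^2g_F$ on $U=I\times F^{n-1}$ with $(F^{n-1},g_F)$ Einstein, and by Theorem~\ref{proptwoeigen} the two Ricci eigenvalues $\xi_1$ (radial, multiplicity $1$) and $\xi_2$ (fiber, multiplicity $n-1$) are functions of $t$ alone. Since being locally conformally flat and having harmonic Weyl tensor are tensorial conditions, it suffices to establish them on the open set $\mathcal{R}$ and then extend to all of $M$ by continuity, using that $\mathcal{R}$ is dense (by analyticity when $\rho\notin\{1/n,1/2(n-1)\}$, and by Corollary~\ref{densedense} in the complete case).

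First I would show that the Cotton tensor vanishes on $U$, which already settles $n=3$ (where $W\equiv0$, so $C=0$ is exactly local conformal flatness) and yields harmonic Weyl for all $n\geq4$ (since $\mathrm{div}\,W$ is a nonzero multiple of $C$ in those dimensions). Working in the adapted orthonormal frame $e_0=\partial_t$, $e_a=\frac1h\hat e_a$, one computes $\nabla Ric$ from the warped-product Christoffel symbols. Because $Ric=\mathrm{diag}(\xi_1,\xi_2,\ldots,\xi_2)$ with $\xi_1,\xi_2$ depending only on $t$ and the fiber metric parallel along $F$, all purely radial and purely fiber components of $\nabla Ric$ and $\nabla R$ are pinned down, and one finds that the only component of $C$ which is not manifestly zero is the mixed one $C_{tab}$, proportional to $g_{ab}$ with coefficient $\xi_2'-\frac{h'}{h}(\xi_1-\xi_2)-\frac{1}{2(n-1)}R'$. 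The decisive point is that this coefficient vanishes identically: the contracted second Bianchi identity $\nabla^iR_{it}=\tfrac12\nabla_tR$, evaluated on the warped product, gives precisely $\frac{h'}{h}(\xi_1-\xi_2)=\frac{-\xi_1'+(n-1)\xi_2'}{2(n-1)}$, and substituting this into the bracket collapses it to $\xi_2'-\xi_2'=0$. Hence $C\equiv0$ on $U$.

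It then remains to upgrade harmonic Weyl to full conformal flatness when $n=4$. Here the fiber $F^{3}$ is a $3$-dimensional Einstein manifold, hence of constant sectional curvature, so $I\times_hF^3$ has all radial planes of curvature $-h''/h$ and all fiber planes of the single curvature $(k-(h')^2)/h^2$. I would then evaluate $W$ directly in the frame above: the mixed components $W_{0abc}$ and the off-diagonal ones $W_{0a0b}$, $W_{abac}$ ($a\neq b\neq c$) vanish because the corresponding Riemann and Ricci entries do, while the diagonal blocks $W_{0a0a}$ and $W_{abab}$ reduce, after inserting $\xi_1=(n-1)P$, $\xi_2=P+(n-2)Q$ and $R=\xi_1+(n-1)\xi_2$ with $P=-h''/h$, $Q=(k-(h')^2)/h^2$, to algebraic identities cancelling to $0$. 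Thus $W\equiv0$ on $\mathcal{R}$, so $M$ is locally conformally flat for $n=4$ as well.

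I expect the decisive step to be the vanishing of $C_{tab}$: it is the one component that does not die for purely algebraic reasons, and it is exactly the contracted Bianchi identity---rather than any extra curvature or completeness hypothesis---that forces it to zero, which is precisely why harmonic Weyl holds in this generality. The only remaining subtlety is the passage from $\mathcal{R}$ to all of $M$, which is harmless once density of $\mathcal{R}$ is invoked but is the sole place where analyticity (or completeness) enters the argument.
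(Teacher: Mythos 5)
Your proposal is correct, and its skeleton coincides with the paper's own proof: both arguments run entirely off the local warped-product decomposition \eqref{locdecomp} of Proposition \ref{local_warp}, deduce harmonic Weyl from the fiber being Einstein, and upgrade to local conformal flatness for $n\in\{3,4\}$ because the fiber, having dimension $2$ or $3$, then has constant sectional curvature. The difference is one of packaging: where the paper quotes two results as black boxes --- Example 16.26 (i) of \cite{besse} for the harmonicity of the Weyl tensor of $I\times_{h}F^{n-1}$ with Einstein fiber, and \cite{brovaz} for conformal flatness when the fiber has constant curvature --- you prove both by hand. Your frame computation checks out: the only component of the Cotton tensor not vanishing for symmetry reasons is $C_{tab}=\bigl[\xi_{2}'-\tfrac{h'}{h}(\xi_{1}-\xi_{2})-\tfrac{1}{2(n-1)}R'\bigr]g_{ab}$, and the contracted second Bianchi identity, which on the warped product reads $(n-1)\tfrac{h'}{h}(\xi_{1}-\xi_{2})=\tfrac{1}{2}\bigl(-\xi_{1}'+(n-1)\xi_{2}'\bigr)$, annihilates exactly this bracket; likewise the cancellations $W_{0a0a}=W_{abab}=0$ under $\xi_{1}=(n-1)P$, $\xi_{2}=P+(n-2)Q$ are correct. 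What your version buys is self-containedness, plus two small genuine improvements: for $n=3$ the route through $C\equiv0$ sidesteps the degenerate meaning of ``Einstein'' for the $2$-dimensional fiber, and you make explicit the passage from the open set $\mathcal{R}$ of regular points to all of $M$ by density and continuity of $C$, $\operatorname{div}W$ and $W$ --- a step the paper's proof leaves implicit, since \eqref{locdecomp} is only available near regular points while the conclusion is asserted on $M$ (note that, as in the paper, this density rests on analyticity for $\rho\notin\{1/n,1/2(n-1)\}$ and on Corollary \ref{densedense} otherwise, the latter requiring completeness).
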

\begin{proof}
    It follows from \eqref{locdecomp} that at our conditions, the $\rho$-Einstein soliton is locally a warped product, with Einstein fiber $F^{n-1}$. Notice that as $F^{n-1}$ is Einstein, it follows from Example 16.26 (i) of Chapter 16 in \cite{besse} that $I\times_{h}F^{n-1}$ has harmonic Weyl tensor. Now, if $n\in\{3,4\}$, then $n-1\in\{2,3\}$, which makes $F^{n-1}$ to be of constant sectional curvature, implying $I\times_{h}F^{n-1}$ to be locally conformally flat \cite{brovaz}, proving the corollary.
\end{proof}

In the next proposition we will use \eqref{wey_har} to show that $B$ vanishes identically if $B(\nabla f,\nabla f)$ is nonnegative on $M$. We first highlight the following lemma, which is a consequence of previous computations.

\begin{lemma}
    Let $(M^n,g,f,\lambda)$ be a nontrivial gradient $\rho$-Einstein soliton, and assume that $B(\nabla f,\nabla f)\geq0$ on $M$. If $\rho= 0$, assume in addition that $dR\wedge df=0$. Then, for any $X,Y,Z\in\mathfrak{X}(M)$:
    \begin{align}
        &(divRm)(X,\nabla f,Y)=\frac{1}{2(n-1)}(\left\langle \nabla f,\nabla R\right\rangle\left\langle X,Y\right\rangle-\left\langle \nabla f,X\right\rangle\left\langle \nabla R,Y\right\rangle),\label{eq_1}\\
        &Rm(X,Y,Z,\nabla f)=\frac{1-2(n-1)\rho}{2(n-1)}(\left\langle \nabla R,Y\right\rangle\left\langle X,Z\right\rangle-\left\langle \nabla R,X\right\rangle\left\langle Y,Z\right\rangle).\label{eq_2}
    \end{align}
\end{lemma}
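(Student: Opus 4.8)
The plan is to prove the two identities by specializing the already-established structural results about the eigenvector $\nabla f$ to the warped-product decomposition from Proposition~\ref{local_warp}, and by invoking the direct computation of $divRm$ in the radial direction. For \eqref{eq_2}, I would start from the curvature identity \eqref{riccident}, namely $R_{ijkl}\nabla^{l}f=\nabla_{i}\nabla_{j}\nabla_{k}f-\nabla_{j}\nabla_{i}\nabla_{k}f$, and replace the Hessian of $f$ by $(\rho R+\lambda)g-Ric$ via the soliton equation \eqref{fundeq}. Differentiating this expression produces terms involving $\nabla R$ and $\nabla Ric$; the key simplification is that under the hypothesis $B(\nabla f,\nabla f)\geq0$, Theorem~\ref{proptwoeigen} tells us $Ric$ has exactly the two eigenvalues $\xi_{1}$ (in the $\nabla f$ direction) and $\xi_{2}$ (with multiplicity $n-1$), both constant on the regular level sets. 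Using \eqref{wedgezero2'}, which rewrites $\nabla R$ as a multiple of $\nabla f$, and the identity $R=\xi_{1}+(n-1)\xi_{2}$, all the Ricci-derivative terms collapse into a clean expression proportional to $\langle\nabla R,\,\cdot\,\rangle$ acting on the metric, yielding precisely the claimed coefficient $\frac{1-2(n-1)\rho}{2(n-1)}$.

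For \eqref{eq_1}, I would use the contracted second Bianchi identity in the form \eqref{divricc}, writing $(divRm)(X,\nabla f,Y)=(divRm)_{klj}\nabla^{k}f$ after the appropriate index placement, so that it equals $\nabla^{k}R_{kj}$-type terms contracted against $\nabla f$. The computation then reduces to differentiating the eigenvalue relation $R_{kj}\nabla^{k}f=\xi_{1}\nabla_{j}f$ from \eqref{eigenvRic}, together with the fact (again from Theorem~\ref{proptwoeigen}) that $\xi_{1}$ and $\xi_{2}$ are constant along level sets. Substituting $\nabla R=\frac{\langle\nabla R,\nabla f\rangle}{|\nabla f|^{2}}\nabla f$ and collecting terms should produce the antisymmetric combination $\langle\nabla f,\nabla R\rangle\langle X,Y\rangle-\langle\nabla f,X\rangle\langle\nabla R,Y\rangle$ with the factor $\frac{1}{2(n-1)}$.

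The main obstacle, I expect, is bookkeeping rather than conceptual: one must carefully track how the derivatives $\nabla_{i}R_{jk}$ decompose relative to the splitting $T_{p}M=\mathbb{R}\nabla f\oplus(\nabla f)^{\perp}$, since $\xi_{1}$ and $\xi_{2}$ vary only in the $t$-direction. The crucial leverage is that the warped-product structure forces all scalar quantities ($R$, $\xi_{1}$, $\xi_{2}$, $|\nabla f|$) to be functions of the arc-length parameter $t$ alone, so their gradients are automatically parallel to $\nabla f$; this is exactly what kills the off-diagonal contributions and makes the two right-hand sides take their stated rank-one antisymmetric form. Once this radial dependence is used systematically, both identities follow by matching coefficients against $g$ and $\nabla R\otimes\nabla f$.
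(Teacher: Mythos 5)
Your route is genuinely different from the paper's, and it is worth seeing why the paper's proof is only two lines. Both identities are, word for word, the statement that the Cotton tensor vanishes: by \eqref{divricc} and \eqref{cotton}, identity \eqref{eq_1} is equivalent to $C_{ijk}=0$, and by \eqref{cotton_rho} (which holds for \emph{every} gradient $\rho$-Einstein soliton, with no extra hypotheses), identity \eqref{eq_2} is again equivalent to $C_{ijk}=0$. The paper simply observes that under the stated hypotheses Corollary \ref{wey_har} is already available: the soliton has harmonic Weyl tensor ($n\geq5$) or is locally conformally flat ($n\in\{3,4\}$), either of which forces $C\equiv0$; then \eqref{eq_1} and \eqref{eq_2} are read off from \eqref{divricc}, \eqref{cotton} and \eqref{cotton_rho}. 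What you propose instead is to reprove $C\equiv 0$ by direct computation from the two-eigenvalue structure of Theorem \ref{proptwoeigen} — essentially redoing, in this special case, the computation behind Besse's Example 16.26 that the paper cites. That is a legitimate alternative, but it is longer, and as written it has a genuine gap.

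The gap is precisely the step where you claim the coefficient $\frac{1}{2(n-1)}$ "emerges by collecting terms". Write $n=\nabla f/|\nabla f|$ and $\nabla u=\dot u\,n$ for functions constant on the regular level sets. The two-eigenvalue structure gives $Ric=\xi_{2}g+(\xi_{1}-\xi_{2})\,n\otimes n$, and the soliton equation gives $\nabla_{i}n_{j}=\phi\,(g_{ij}-n_{i}n_{j})$ with $\phi=(\rho R+\lambda-\xi_{2})/|\nabla f|$. A direct antisymmetrization then yields
\begin{align*}
\nabla_{i}R_{jk}-\nabla_{j}R_{ik}=\bigl[\dot\xi_{2}-(\xi_{1}-\xi_{2})\phi\bigr](n_{i}g_{jk}-n_{j}g_{ik}),
\end{align*}
so \eqref{eq_1} holds if and only if $\dot\xi_{2}-(\xi_{1}-\xi_{2})\phi=\frac{\dot\xi_{1}+(n-1)\dot\xi_{2}}{2(n-1)}$. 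This relation is \emph{not} an algebraic consequence of the eigenvalue structure, of \eqref{wedgezero2'}, or of differentiating $R_{kj}\nabla^{k}f=\xi_{1}\nabla_{j}f$: that differentiation controls $\nabla_{i}R_{jk}\nabla^{k}f$ (contraction on a Ricci slot), whereas the term $\nabla_{\nabla f}R_{ij}$ entering $(divRm)(\cdot,\nabla f,\cdot)$ has the contraction on the derivative slot, and nothing in your list of ingredients bridges the two. What closes the computation is the twice-contracted second Bianchi identity $\operatorname{div}Ric=\tfrac{1}{2}dR$, which in this notation reads $(\xi_{1}-\xi_{2})\phi=\tfrac{1}{2}\dot\xi_{2}-\tfrac{1}{2(n-1)}\dot\xi_{1}$ and makes the two sides agree exactly. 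With that identity added, your computation does go through (and then \eqref{eq_2} follows from \eqref{eq_1} together with \eqref{riccident} and the soliton equation, as in \eqref{cotton_rho}); but as proposed, the decisive cancellation — which is the entire content of the lemma — is asserted rather than proved.
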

\begin{proof}
From the vanishment of the Cotton tensor, \eqref{divricc} and \eqref{cotton} imply
\begin{align*}
    (divRm)_{jik}=\nabla_{i}R_{jk}-\nabla_{j}R_{ik}=\frac{1}{2(n-1)}(g_{jk}\nabla_{i}R-g_{ik}\nabla_{j}R),
\end{align*}
which proves \eqref{eq_1}. Finally, using \eqref{cotton_rho} we get right away that
\begin{align*}
    R_{jikl}\nabla_{l}f=\left(\frac{1-2(n-1)\rho}{2(n-1)}\right)(\nabla_{i}Rg_{jk}-\nabla_{j}Rg_{ik}),
\end{align*}
proving \eqref{eq_2}.
\end{proof}

Notice that \eqref{eq_1} allows us to rewrite Lemma \ref{lemma1} eliminating the term $(divRm)_{ijk}\nabla_{j}f$, and making the expression useful to prove the next result.

\begin{proposition}\label{prop1} Let $(M^n,g,f,\lambda)$ be a nontrivial gradient $\rho$-Einstein soliton, and assume $B(\nabla f,\nabla f)\geq0$. If $\rho= 0$, assume in addition that $dR\wedge df=0$. Then
\begin{align}
    &B(X,Y)=0,\ \forall X,Y\in \mathfrak{X}(M)\label{part1}.
\end{align}
\end{proposition}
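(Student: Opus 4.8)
The plan is to prove that $B$ vanishes first on the regular set $\mathcal{R}$, and then to propagate the conclusion to all of $M$. On $\mathcal{R}$ the idea is to turn the expression \eqref{secndbachstep} for the Bach tensor into a purely algebraic one and then feed in the spectral rigidity of $Ric$ supplied by Theorem \ref{proptwoeigen}, together with the fact that the radial eigenvalue $\mu$ is forced to vanish.

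First I would use \eqref{eq_1} to eliminate the term $(divRm)(\cdot,\nabla f,\cdot)$ from \eqref{secndbachstep}, replacing it by $\tfrac{1}{2(n-1)}(\langle\nabla f,\nabla R\rangle\,g-df\otimes dR)$. Writing $Q:=\vert\nabla f\vert^{-2}\,df\otimes df$ and $P:=g-Q$ for the orthogonal projections onto $\mathbb{R}\nabla f$ and its complement, I note that $\nabla R\parallel\nabla f$ by \eqref{wedgezero2'} and $\nabla\xi_{1}\parallel\nabla f$ because $\xi_{1}$ is constant on the regular level sets of $f$ (Theorem \ref{proptwoeigen}); hence both $df\otimes dR$ and the remaining term $df\otimes d\xi_{1}$ of \eqref{secndbachstep} are scalar multiples of $Q$. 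After this substitution every term of \eqref{secndbachstep} is a multiple of $g$ or of $Q$.

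Next I would insert the spectral decomposition. By Theorem \ref{proptwoeigen}, under $B(\nabla f,\nabla f)\geq0$ the Ricci tensor has only the two eigenvalues $\xi_{1}$ (simple, in the direction of $\nabla f$) and $\xi_{2}$ (multiplicity $n-1$), so $Ric=\xi_{1}Q+\xi_{2}P$ and, using $Q^2=Q$, $P^2=P$, $QP=0$, also $Ric^2=\xi_{1}^2Q+\xi_{2}^2P$, while $R=\xi_{1}+(n-1)\xi_{2}$ and $\vert Ric\vert^2=\xi_{1}^2+(n-1)\xi_{2}^2$. Substituting these into the algebraic identity of the previous step shows that $(n-2)B=\phi_{1}Q+\phi_{2}P$ for scalar functions $\phi_{1},\phi_{2}$ on $\mathcal{R}$. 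Two constraints now force $B=0$: since $B$ is trace free, $\phi_{1}+(n-1)\phi_{2}=0$; and since $\mu=0$ on $\mathcal{R}$ (Theorem \ref{proptwoeigen}), Proposition \ref{Propgradeigv} gives $B(\nabla f,\nabla f)=\phi_{1}\vert\nabla f\vert^{2}/(n-2)=0$, hence $\phi_{1}=0$ and then $\phi_{2}=0$. When $n=3$ one can shortcut this: Corollary \ref{wey_har} yields a vanishing Cotton tensor, and then $B=\nabla^{k}C_{kij}=0$ by \eqref{bhctn=3}.

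Finally, to pass from $\mathcal{R}$ to $M$ — recall we assume neither completeness nor density of $\mathcal{R}$ — I would split $M=\overline{\mathcal{R}}\cup(M\setminus\overline{\mathcal{R}})$. On $\overline{\mathcal{R}}$ the smooth tensor $B$ vanishes by continuity. The open set $M\setminus\overline{\mathcal{R}}=\mathrm{int}(M\setminus\mathcal{R})$ consists of critical points of $f$, so on each of its connected components $f$ is constant, $\nabla^{2}f\equiv0$, and \eqref{fundeq} gives $Ric=(\rho R+\lambda)g$; by Schur's lemma this region is Einstein, hence Bach flat. Thus $B\equiv0$ on $M$. I expect the main obstacle to be the algebraic bookkeeping of the second and third steps: the whole argument hinges on checking that, once $divRm$ is removed, no term off the $\{g,Q\}$-span survives, and this is exactly where the parallelisms $\nabla R\parallel\nabla f$, $\nabla\xi_{1}\parallel\nabla f$ (rectifiability) and the two-eigenvalue rigidity of Theorem \ref{proptwoeigen} are indispensable.
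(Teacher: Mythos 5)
Your proof is correct, and its core is genuinely different from the paper's. The paper argues pointwise on $\mathcal{R}$ in an orthonormal eigenbasis of $Ric$: the components $B(e_{1},e_{i})$ vanish by \eqref{gradeigv} together with $\mu=0$, and the tangential components $B_{ab}$ are then expanded explicitly --- substituting \eqref{firstdereigID}, \eqref{eq_1'} and the trace identity \eqref{IdentitySch} into \eqref{firstbachstep} --- and shown to cancel term by term once $R=\xi_{1}+(n-1)\xi_{2}$ and $\vert Ric\vert^{2}=\xi_{1}^{2}+(n-1)\xi_{2}^{2}$ are inserted. You bypass that long cancellation: with $Q=\vert\nabla f\vert^{-2}df\otimes df$ and $P=g-Q$, the parallelisms $\nabla R\parallel\nabla f$, $\nabla\xi_{1}\parallel\nabla f$ and the two-eigenvalue splitting $Ric=\xi_{1}Q+\xi_{2}P$ of Theorem \ref{proptwoeigen} place $(n-2)B=\phi_{1}Q+\phi_{2}P$ in $\operatorname{span}\{Q,P\}$, and then $\phi_{1}=0$ follows from $\mu=0$ via Proposition \ref{Propgradeigv}, while $\phi_{2}=0$ follows from $\phi_{1}+(n-1)\phi_{2}=0$, i.e.\ from trace-freeness of $B$; the precise values of $\phi_{1},\phi_{2}$ are never needed, nor is \eqref{IdentitySch}. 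The one external ingredient is that $B$ is trace-free, a standard fact the paper asserts in the introduction but does not prove; if you want the argument self-contained, note that $g^{ij}B_{ij}=0$ follows from \eqref{bhct} and \eqref{bhctn=3}, since $g^{ij}C_{kij}=0$ by the contracted Bianchi identity and the Weyl tensor is totally trace-free. Finally, your last paragraph is an actual improvement rather than a detour: the paper's computation establishes $B=0$ only at points of $\mathcal{R}$, whereas the statement asserts it on all of $M$, and neither completeness (hence density of $\mathcal{R}$ via Corollary \ref{densedense}) nor analyticity is available under the hypotheses; your splitting $M=\overline{\mathcal{R}}\cup(M\setminus\overline{\mathcal{R}})$ --- continuity on the closure, and Schur's lemma plus ``Einstein implies Bach-flat'' on the open set of critical points, where \eqref{fundeq} reduces to $Ric=(\rho R+\lambda)g$ --- closes exactly this gap.
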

\begin{proof}
Let $\{e_{1},\ldots,e_{n}\}$ be an orthonormal basis diagonalizing $Ric$ at $p\in\mathcal{R}$, with $e_{1}=\frac{\nabla f(p)}{\vert\nabla f(p)\vert}$. We notice that $B(e_{1},e_{i})=0$, $\forall i\in\{1,\ldots,n\}$, follow from \eqref{gradeigv}. In what follows we are going to show that $B(e_{a},e_{b})=0$, $\forall a,b\in\{2,\ldots,n\}$. First, we consider the expression for $B$ given in Lemma \ref{lemma1}, and use equations \eqref{firstdereigID} and \eqref{eq_1} to reduce the computation of $B(e_{a},e_{b})$ to an algebraic level. In order to do so, notice that equation \eqref{eq_1} is equivalent to
\begin{align}\label{eq_1'}
    (divRm)(\cdot,\nabla f,\cdot)=\frac{1}{2(n-1)}(\left\langle \nabla f,\nabla R\right\rangle g-df\otimes dR).
\end{align}

Inserting equations \eqref{firstdereigID} and \eqref{eq_1'} in \eqref{firstbachstep}, we conclude that
\begin{eqnarray*}
(n-2)B&=&\frac{2}{n-2}Ric^2+\left[\lambda-\left(\frac{n-(n-1)(n-2)\rho}{(n-1)(n-2)}\right) R-\frac{1}{n-1}\xi_{1}\right]Ric\\
&-&\left[\frac{1}{n-2}\left(\vert Ric\vert^2-\frac{R^2}{n-1}\right)+\left(\frac{(1-2(n-1)\rho)}{2(n-1)}\right)\Delta R\right]g\\
&+&\frac{1}{n-1}d\xi_{1}\otimes df-\frac{1}{2(n-1)}df\otimes dR+\frac{1}{n-1}\xi_{1}(\rho R+\lambda)g+\frac{1}{2(n-1)}\left\langle\nabla f,\nabla R\right\rangle g
\end{eqnarray*}
Now, we collect all terms multiplying $g$ in the expression above denoted by $T$ and use \eqref{IdentitySch} to get

\begin{align*}
&T=-\frac{1}{n-2}\left(\vert Ric\vert^2-\frac{R^2}{n-1}\right)-\left(\frac{(1-2(n-1)\rho)}{2(n-1)}\right)\Delta R+\frac{1}{n-1}\xi_{1}(\rho R+\lambda)+\frac{1}{2(n-1)}\left\langle\nabla f,\nabla R\right\rangle\\
&=\frac{1}{n-2}\left(\frac{R^2}{n-1}-\vert Ric\vert^2\right)+\frac{1}{2(n-1)}\left(2|Ric|^2-2R\left(\rho R+\lambda\right)+2\xi_{1}(\rho R+\lambda)\right)\\
&=\frac{1}{n-2}\left(\frac{R^2}{n-1}-\vert Ric\vert^2\right)+\frac{1}{n-1}\left(|Ric|^2-R\left(\rho R+\lambda\right)\right)+\frac{1}{n-1}\xi_{1}(\rho R+\lambda)\\
&=\frac{1}{n-1}\left(\frac{1}{n-2}(R^2-\vert Ric\vert^2)-R\left(\rho R+\lambda\right)+\xi_{1}(\rho R+\lambda)\right).
\end{align*}

%\begin{eqnarray*}
% T&=&-\frac{1}{n-2}\left(\vert Ric\vert^2-\frac{R^2}{n-1}\right)-\left(\frac{(1-2(n-1)\rho)}{2(n-1)}\right)\Delta R+\frac{1}{n-1}\xi_{1}(\rho R+\lambda)+\frac{1}{2(n-1)}\left\langle\nabla f,\nabla R\right\rangle\\
%&=&\frac{1}{n-2}\left(\frac{R^2}{n-1}-\vert Ric\vert^2\right)+\frac{1}{2(n-1)}\left(2|Ric|^2-2R\left(\rho R+\lambda\right)-\left\langle\nabla f,\nabla R\right\rangle+\left\langle\nabla f,\nabla R\right\rangle+2\xi_{1}(\rho R+\lambda)\right)\\
%&=&\frac{1}{n-2}\left(\frac{R^2}{n-1}-\vert Ric\vert^2\right)+\frac{1}{n-1}\left(|Ric|^2-R\left(\rho R+\lambda\right)\right)+\frac{1}{n-1}\xi_{1}(\rho R+\lambda)\\
%&=&\frac{1}{n-1}\left(\frac{1}{n-2}(R^2-\vert Ric\vert^2)-R\left(\rho R+\lambda\right)+\xi_{1}(\rho R+\lambda)\right).
%\end{eqnarray*}

Returning these computations to the expression of $B$ found above, we obtain
\begin{eqnarray*}
(n-2)B&=&\frac{2}{n-2}Ric^2+\left[\lambda-\left(\frac{n-(n-1)(n-2)\rho}{(n-1)(n-2)}\right) R-\frac{1}{n-1}\xi_{1}\right]Ric\\
&+&\frac{1}{n-1}\left[\frac{1}{n-2}(R^2-\vert Ric\vert^2)-R\left(\rho R+\lambda\right)+\xi_{1}(\rho R+\lambda)\right]g\\
&\textcolor{red}{-}&\frac{1}{2(n-1)}df\otimes d(R+2\xi_{1}).
\end{eqnarray*}

Recall that $df(e_{a})=0$ and, in view of $\xi_{2}=\xi_{3}=\cdots=\xi_{n}$, we may write $Ric(e_{a},e_{b})=\xi_{2}\delta_{ab}$. Thus,
\begin{eqnarray*}
    (n-2)B_{ab}&=&\frac{2}{n-2}\xi_{2}^2\delta_{ab}+\left[\lambda-\left(\frac{n-(n-1)(n-2)\rho}{(n-1)(n-2)}\right) R-\frac{1}{n-1}\xi_{1}\right]\xi_{2}\delta_{ab}\\
    &+&\frac{1}{n-1}\left[\frac{1}{n-2}(R^2-\vert Ric\vert^2)-R\left(\rho R+\lambda\right)+\xi_{1}(\rho R+\lambda)\right]\delta_{ab},
\end{eqnarray*}
and consequently, $B_{ab}=0$ if $a\neq b$. On the other hand, setting $a=b$ and using $R=\xi_{1}+(n-1)\xi_{2}$ and $\vert Ric\vert^2=\xi_{1}^2+(n-1)\xi_{2}^2$, we have
\begin{eqnarray*}
\kappa B_{aa}&=&2(n-1)\xi_{2}^2+\left[(n-1)(n-2)\lambda-\left(n-(n-1)(n-2)\rho\right) R-(n-2)\xi_{1}\right]\xi_{2}\\
&+&R^2-\vert Ric\vert^2-(n-2)R\left(\rho R+\lambda\right)+(n-2)\xi_{1}(\rho R+\lambda)\\
 &=&2(n-1)\xi_{2}^2+(n-1)(n-2)\lambda\xi_{2}-\left(n-(n-1)(n-2)\rho\right) R\xi_{2}-(n-2)\xi_{1}\xi_{2}\\
&+&(1-(n-2)\rho)R^2-\vert Ric\vert^2-(n-2)\lambda R+(n-2)\xi_{1}(\rho R+\lambda)\\
&=&2(n-1)\xi_{2}^2-\left(n-(n-1)(n-2)\rho\right)(\xi_{1}+(n-1)\xi_{2})\xi_{2}-(n-2)\xi_{1}\xi_{2}\\
&+&(1-(n-2)\rho)(\xi_{1}+(n-1)\xi_{2})^2-(\xi_{1}^2+(n-1)\xi_{2}^2)+(n-1)(n-2)\lambda\xi_{2}\\
&-&(n-2)\lambda (\xi_{1}+(n-1)\xi_{2})+(n-2)\xi_{1}(\rho (\xi_{1}+(n-1)\xi_{2})+\lambda)\\
&=&(1-(n-2)\rho-1+(n-2)\rho)\xi^{2}_{1}\\
&+&(2(n-1)-(n-(n-1)(n-2)\rho)(n-1)+(n-1)^2(1-(n-2)\rho)-n+1)\xi^{2}_{2}\\
&+&(-n+(n-1)(n-2)\rho-n+2+2(n-1)(1-(n-2)\rho)+(n-2)(n-1)\rho)\xi_{1}\xi_{2}\\
&+&(-(n-2)\lambda+(n-2)\lambda)\xi_{1}+(-(n-2)(n-1)\lambda+(n-1)(n-2)\lambda)\xi_{2}\\
&=&(n-1-n(n-1)+(n-1)^2(n-2)\rho+(n-1)^2-(n-1)^2(n-2)\rho)\xi^{2}_{2}\\
&+&(-n+(n-1)(n-2)\rho-n+2+2(n-1)-2(n-1)(n-2)\rho+(n-2)(n-1)\rho)\xi_{1}\xi_{2}\\
&=&0,
\end{eqnarray*}
where $\kappa=(n-1)(n-2)^2$. This shows that $B_{aa}=0$, finishing the proof.
\end{proof}

\begin{proof}[{\bf Proof of Corollary \ref{main_cor}}]
    The proof follows combining Corollary \ref{wey_har} and Proposition \ref{prop1}.
\end{proof}

\begin{remark}
     Putting together Proposition \ref{prop1}, the classification of Bach flat Ricci solitons and its analyticity, we conclude by Corollary \ref{signRScase} that for all rectifiable Ricci solitons that are not rotationally symmetric, we should have $B(\nabla f,\nabla f)<0$ in a dense set of $M$. In particular, this happens for those built on \cite{bdw,bdw2,dw,dw2}.
\end{remark}

\section{Locally conformally flat $\rho$-Einstein solitons}\label{locaconf}

In this section we show that complete locally conformally flat $\rho$-Einstein solitons with nonnegative sectional curvature are rotationally symmetric. In the sequence, we classify those for which $\lambda=0$.

\begin{theorem}\label{theorem_rotationally}
	Let $(M^n,g,f,\lambda)$, $n\geq3$, be a complete simply connected nontrivial gradient $\rho$-Einstein soliton with nonnegative sectional curvature and locally conformally flat. Then there are $\Lambda\subset M$ with at most two points and a smooth function $h:(a,b)\rightarrow(0,\infty)$ such that $M\backslash\Lambda$ is isometric to $(a,b)\times_{h}\mathbb{S}^{n-1}$, with $-\infty\leq a<b\leq\infty$. In other words, $M^n$ is rotationally symmetric.
\end{theorem}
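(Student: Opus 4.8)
The plan is to upgrade the local warped product structure already obtained into a single global rotationally symmetric model, using completeness and simple connectivity. First I would check that the earlier results apply: since $M$ is locally conformally flat, its Bach tensor vanishes identically (as recalled in the introduction), whence $B(\nabla f,\nabla f)\equiv 0\geq 0$. If $\rho=0$, completeness together with $B\equiv 0$ yields $df\wedge dR=0$ by Remark \ref{remarkRectf_rho0}, i.e. the soliton is rectifiable, while for $\rho\neq 0$ this is automatic. Thus Theorem \ref{proptwoeigen} and Proposition \ref{local_warp} apply, so around any regular point $p$ the metric has the form $dt^{2}+h(t)^{2}g_{_F}$ on $U=I\times F^{n-1}$, with $\partial_{t}=\nabla f/|\nabla f|$ and $(F^{n-1},g_{_F})$ Einstein. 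Because $U$ is locally conformally flat, the characterization of conformally flat warped products over a one-dimensional base (see \cite{brovaz}) forces $(F^{n-1},g_{_F})$ to have constant sectional curvature $c$; when $n=3$ this is automatic, the fiber being a surface.

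Next I would read off the sign of $c$ from the curvature assumption. The standard formulas for warped products (c.f. \cite{oneill}) give sectional curvature $-h''/h$ on the mixed planes and $(c-(h')^{2})/h^{2}$ on the fiber planes, so nonnegative sectional curvature is equivalent to $h''\leq 0$ and $c\geq (h')^{2}\geq 0$. If $c>0$, then after rescaling $g_{_F}$ the fiber is a space form of positive curvature. If $c=0$, then $(h')^{2}\leq 0$ forces $h'\equiv 0$, hence all sectional curvatures vanish on the dense set $\mathcal{R}$ (Corollary \ref{densedense}), and by continuity $M$ is flat.

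It then remains to globalize. I would extend the warped chart along the complete integral curves of $\partial_{t}$, using that $R,\xi_{1},\xi_{2},|\nabla f|$ and $h$ are functions of $t$ alone by rectifiability and Theorem \ref{proptwoeigen}. Let $\Lambda$ be the set of critical points of $f$; away from $\Lambda$ the level sets of $f$ are the collapsing fibers. Near a point of $\Lambda$ these level sets are small geodesic spheres, and smoothness of the metric at the collapse forces $c>0$ and the fiber to close up to a round sphere; since the maximal radial interval $(a,b)$ has two endpoints, each contributing at most one collapse, $\Lambda$ has at most two points. As removing at most two points from an $n$-manifold with $n\geq 3$ (codimension $n\geq 3$) does not alter the fundamental group, simple connectivity of $M$ passes to $M\setminus\Lambda\cong (a,b)\times_{h}F^{n-1}$ and hence to $F^{n-1}$; a complete simply connected space form of positive curvature is $\mathbb{S}^{n-1}$, which gives the asserted decomposition. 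In the flat case $c=0$, completeness and simple connectivity give $M\cong\mathbb{R}^{n}$, again rotationally symmetric with a single pole.

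The main obstacle is precisely this global assembly. The interior statements, namely constant curvature of the fiber and the sign of $c$, are routine once Proposition \ref{local_warp} is available; but patching the local warped products into one model requires controlling $f$ and the metric at the critical points, proving the collapsing fibers close up smoothly to round spheres, and bounding the number of poles by two. These are the delicate points, and they are where completeness and simple connectivity are genuinely used.
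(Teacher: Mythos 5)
Your local analysis coincides with the paper's own first step: local conformal flatness gives $B\equiv 0$, Remark \ref{remarkRectf_rho0} supplies $df\wedge dR=0$ when $\rho=0$, and Theorem \ref{main_thm} (via Proposition \ref{local_warp}) together with the characterization of conformally flat warped products over an interval (\cite{brovaz0,brovaz}) forces the fiber to be a space form. Your sign analysis via the warped product curvature formulas ($-h''/h$ and $(c-(h')^{2})/h^{2}$) is only cosmetically different from the paper's, which runs the same dichotomy through the Gauss equation for the totally umbilical fibers: either $c=0$ and $M$ is flat (hence $\mathbb{R}^n$), or $c>0$.

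The genuine gap is exactly the step you flag as ``the main obstacle'': the globalization is never carried out, and the route you sketch would be hard to execute as stated. You define $\Lambda$ as the critical set of $f$ and propose to show that near its points the level sets are small geodesic spheres collapsing smoothly to round fibers, with at most one collapse per end. But a priori the critical set of $f$ need not be finite, the level sets need not be connected, and ``smoothness of the metric at the collapse forces the fiber to close up to a round sphere'' is an assertion, not an argument. The paper's proof shows that none of this pole analysis is needed. Following \cite{fer_gar}, it fixes one regular level $F^{n-1}=f^{-1}(0)$ (embedded and two-sided since $M$ is simply connected), considers the signed distance $d_{F}$, and proves a maximal-interval statement: the set $I_{+}=\{b\in\mathbb{R}\ \vert\ g=dt^{2}+h^{2}g_{F}\ \text{in}\ d_{F}^{-1}([0,b])\}$ contains $[0,b]$ whenever $h>0$ on $[0,b]$, the propagation relying only on the density of regular points (Corollary \ref{densedense}) and the already established local structure; hence $I_{+}$ is $[0,\infty)$ or $[0,b)$, and symmetrically for $I_{-}$. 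The set $\Lambda$ is then simply the complement of the maximal warped region, at most one point for each finite endpoint where $h$ degenerates, and since the theorem only asserts the warped product structure on $M\setminus\Lambda$, no smoothness-at-the-pole argument is ever required. In short, you attack the difficulty at the poles, where it is genuinely delicate; the paper's open-and-closed argument on the regular part makes that difficulty disappear.
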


\begin{proof}
    Once $M$ is locally conformally flat, the Bach tensor of $M$ vanishes identically. From Theorem \ref{main_thm}, we know that around a regular point $p\in M$ of $f$ there is an open set $U\subset M$ isometric to a warped product $(a,b)\times_{h}F^{n-1}$, where $F^{n-1}$ is an Einstein manifold. Once $M$ is locally conformally flat, Theorem 1 of \cite{brovaz0} (or Theorem 2 of \cite{brovaz}) imply that $F^{n-1}$ is a space form of constant sectional curvature $K_{F}$.
    
    In what follows we show that there is a set $\Lambda\subset M$ so that $M\backslash\Lambda$ is isometric to $(a,b)\times_{h}\mathbb{S}^{n-1}$, with $-\infty\leq a<b\leq\infty$. %We follow the ideas established in proofs of Theorem 1 and 2 of \cite{fer_gar}.

    First notice that once the open set $U\subset M$ is isometric to the warped product $(a,b)\times_{h}F^{n-1}$, $F^{n-1}$ is a totally umbilical hypersurface of $M$ \cite{oneill}, that is, its second fundamental form satisfies $II=\psi g_{F}$, for some function $\psi$ on $F$. Consequently, the Gauss formula and $sec_{M}\geq0$ implies
    \begin{align}\label{gauss_eq}
        \begin{split}
            sec_{F}(X,Y)&=sec_{M}(X,Y)+II(X,X)II(Y,Y)-(II(X,Y))^2\\
            &=sec_{M}(X,Y)+\psi^2(g_{F}(X,X)g_{F}(Y,Y)-(g_{F}(X,Y))^2)\\
            &\geq0,
        \end{split}
    \end{align}
    for all sets $\{X,Y\}$ of L.I. vectors on $T_{q}F$, and all $q\in F$. Since $sec_{F}$ is a constant, we only need to consider two cases, which after normalization are $K_{F}=0$ or $K_{F}=1$.

    Suppose $K_{F}=0$. In this case, \eqref{gauss_eq} implies that $\psi$ vanishes identically. Consequently, $F^{n-1}$ is totally geodesic, which implies that $U$ is isometric to the product manifold $(a,b)\times F^{n-1}$. In particular, $sec_{M}=0$ on $U$. Once by Corollary \ref{densedense} the set of regular points of $f$ is dense in $M$, we conclude that $M^{n}$ is flat, and hence isometric to the Gaussian soliton $\mathbb{R}^{n}$, which is rotationally symmetric.

    Suppose $K_{F}=1$. In this case, $U$ is isometric to $(a,b)\times_{h}F^{n-1}$, where $F^{n-1}$ is an open set of $\mathbb{S}^{n-1}$. From now on, we argue as in the proofs of Theorem 1 and 2 of \cite{fer_gar}. Once $M$ is simply connected, $F^{n-1}$ is an embedded two-sided hypersurface of $M$. Consider the signed distance from $F^{n-1}$, $d_{F}:M\rightarrow\mathbb{R}$, defined as $d_{F}(p)=\inf_{x\in F}\{d(p,x)\}$. We are going to consider $F^{n-1}=f^{-1}(0)$, which means at our conditions that the hypersurface parallel to $F^{n-1}$ at distance $t\in\mathbb{R}$ is given by $f^{-1}(t)$. Let us first consider the positive side of $M$. Consider the nonempty set $I_{+}=\{b\in\mathbb{R}\vert g=dt^2+h^{2}g_{F}\ \text{in}\ d_{F}^{-1}([0,b])\}$. Following the same argument of \cite{fer_gar}, based on the fact that the set of regular points of $f$ is dense in $M$, we see that $[0,b]\subset I_{+}$, whenever $h(t)>0$, $\forall t\in[0,b]$. Thus, either $I_{+}=[0,\infty)$ or $I_{+}=[0,b)$, for some $b\in(0,\infty)$. For the negative side, the same arguments show that the set $I_{-}=\{a\in\mathbb{R}\vert g=dt^2+h^{2}g_{F}\ \text{in}\ d_{F}^{-1}([a,0])\}$ is either equals to $(-\infty,0]$ or to $(a,0]$, for some $a\in(-\infty,0)$. This finishes the proof.
\end{proof}

\begin{proof}[{\bf Proof of Theorem \ref{thm_classif}}]
        First, arguing as in the first part of the proof of Theorem \ref{theorem_rotationally}, once $M$ is locally conformally flat, $M$ is locally isometric to a warped product $(a,b)\times_{h}F^{n-1}$, where $F^{n-1}$ is a space form of constant sectional curvature $K_{F}$. Now, proceeding as in the last part of Theorem \ref{theorem_rotationally}, there is a set $\Lambda\subset M$ with at most two points so that $M\backslash\Lambda$ is isometric to $(a,b)\times_{h}F^{n-1}$, with $-\infty\leq a<b\leq\infty$, where $F^{n-1}$ is either $\mathbb{S}^{n-1}$, $\mathbb{R}^{n-1}$ or $\mathbb{H}^{n-1}$. Now we use Theorems \ref{rotat_=0} and \ref{rotat_=04}, proved in \cite{catino}, to conclude the proof.
\end{proof}

\begin{proof}[{\bf Proof of Corollary \ref{cor_W=0}}]
    From item \eqref{item3} of Corollary \ref{main_cor} and the assumption $n\in\{3,4\}$, it follows that $M$ is locally conformally flat. The result now follows from Theorem \ref{thm_classif}.
\end{proof}

Following the same steps in the proof of Theorem \ref{thm_classif}, and Theorem \ref{rotat_=04}, we obtain the following corollary: 

\begin{corollary}
	Let $(M^n,g,f,\lambda)$, $n\geq3$, be a complete simply connected and locally conformally flat nontrivial gradient $\rho$-Einstein soliton, with nonnegative sectional curvature and $Ric(\nabla f,\nabla f)$ vanishing identically. Then $M$ is isometric to $\mathbb{R}\times\mathbb{H}^{n-1}$, $\mathbb{R}^{n}$ or $\mathbb{R}\times\mathbb{S}^{n-1}$.
\end{corollary}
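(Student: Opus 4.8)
The plan is to run the same template as the proof of Theorem \ref{thm_classif}, but to feed the output into Theorem \ref{rotat_=04} instead of Theorem \ref{rotat_=0}. First I would record that the hypotheses of the warped-product machinery of Section \ref{locstruc} are in force. Since $M$ is locally conformally flat, its Bach tensor vanishes identically, so in particular $B(\nabla f,\nabla f)\equiv 0\geq 0$ on $M$. When $\rho=0$, completeness together with local conformal flatness guarantees rectifiability, i.e. $df\wedge dR=0$ (Remark \ref{remarkRectf_rho0}). Hence Theorem \ref{main_thm} applies, and every regular point of $f$ has a neighborhood that splits as a warped product over an Einstein fiber.

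Next, arguing exactly as in the opening of the proof of Theorem \ref{thm_classif} (which rests on Theorem \ref{theorem_rotationally}), I would upgrade this to a global structure. Local conformal flatness promotes each Einstein fiber to a space form of constant curvature (by \cite{brovaz,brovaz0}), and then completeness, simple connectedness, and the density of $\mathcal{R}$ (Corollary \ref{densedense}) let the signed-distance construction of \cite{fer_gar} assemble the local pieces into a global isometry $M\setminus\Lambda\cong(a,b)\times_h F^{n-1}$, where $\Lambda$ has at most two points and $F^{n-1}\in\{\mathbb{S}^{n-1},\mathbb{R}^{n-1},\mathbb{H}^{n-1}\}$. The nonnegativity of the sectional curvature is precisely the hypothesis placing us in the rotationally symmetric setting of Theorem \ref{theorem_rotationally}; the resulting decomposition is a warped product with canonical fibers in the sense of \eqref{canon_fiber}.

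Finally I would translate the standing hypothesis into warped-product language. On $\mathcal{R}$ one has $\partial_t=\nabla f/|\nabla f|$, so $Ric(\partial_t,\partial_t)=Ric(\nabla f,\nabla f)/|\nabla f|^2$; the assumption that $Ric(\nabla f,\nabla f)$ vanishes identically therefore forces $Ric(\partial_t,\partial_t)\equiv 0$ on the warped-product region, equivalently $h''\equiv 0$, so $h$ is affine. An affine positive warping function cannot vanish at both endpoints of a bounded interval, which rules out a compact model and shows $(M,g)$ is noncompact. With the global canonical-fiber structure and noncompactness in hand, Theorem \ref{rotat_=04} applies verbatim and yields that $M$ is isometric to $\mathbb{R}\times\mathbb{S}^{n-1}$, $\mathbb{R}\times\mathbb{H}^{n-1}$, or $\mathbb{R}^{n}$.

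The step I expect to be the main obstacle is not the final appeal to Theorem \ref{rotat_=04}, which is immediate once the hypotheses are verified, but the careful verification of those hypotheses: one must know that the decomposition is \emph{global}, with \emph{canonical} (constant-curvature) fibers and an exceptional set of at most two points, and that the ambient space is complete and noncompact. Securing this global structure through the signed-distance argument, and confirming that $Ric(\partial_t,\partial_t)=0$ is inherited from $Ric(\nabla f,\nabla f)\equiv 0$ across the dense set of regular points, is the delicate part of the argument.
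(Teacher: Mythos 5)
Your proposal is correct and takes essentially the same route as the paper: the paper's proof is precisely to repeat the steps of Theorem \ref{thm_classif} (local warped-product structure from Theorem \ref{main_thm} using local conformal flatness, then the global signed-distance assembly into $(a,b)\times_h F^{n-1}$ with constant-curvature fiber) and then invoke Theorem \ref{rotat_=04}. Your extra verifications --- rectifiability when $\rho=0$ via Remark \ref{remarkRectf_rho0}, the passage from $Ric(\nabla f,\nabla f)\equiv 0$ to $Ric(\partial_t,\partial_t)\equiv 0$ by density of regular points, and noncompactness via the affinity of $h$ --- are details the paper leaves implicit, and you handle them correctly.
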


\section{Declarations}

\noindent {\bf Author contributions:} The authors contributed equally to this work. 

\vspace{.5cm}

\noindent {\bf Data availability:} Data sharing is not applicable to this article as no datasets were generated or analyzed during the current study.

\vspace{.5cm}

\noindent {\bf Conflict of interest:} The authors declare that there is no conflict of interest.

\vspace{.5cm}

\noindent {\bf Funding:} The first author was supported by Brazilian National Council for Scientific and Technological Development and by FAPITEC/SE/Brazil. The second and third author did not receive any funding for conducting this study. 

\vspace{.5cm}

\noindent {\bf Acknowledgments:} The first author has been partially supported by Brazilian National Council for Scientific and Technological Development (CNPq Grants 408834/2023-4 and
403869/2024-2) and FAPITEC/SE/Brazil 019203.01303/2024-1.

\vspace{.5cm}

\bibliographystyle{amsplain}

\begin{thebibliography}{10}
   \bibitem{agila} Agila E. F. and Gomes, J. N. \textit{Geometric and analytic results for Einstein solitons}. Mathematische Nachrichten. 2024.

    \bibitem{bach} Bach, R. \textit{Zur weylschen relativitätstheorie und der weylschen erweiterung des krümmungstensorbegriffs}. Mathematische Zeitschrift, 9(1), 110-135, 1921.

    \bibitem{berg} Bergman, J. \textit{Conformal Einstein spaces and Bach tensor generalization in n dimensions}. Linkopings Universitet (Sweden); 2004.
    
    \bibitem{besse} Besse A. \textit{Einstein Manifolds}. Springer Science and Business Media. 2007.
    
	\bibitem{borges} Borges, V. \textit{On complete gradient Schouten solitons}. Nonlinear Analysis 221, p.112883, 2022.

    \bibitem{borges2} Borges, V. \textit{Rigidity of Bach-flat gradient Schouten solitons}. manuscripta mathematica, pp.1-11, 2024.

    \bibitem{borges_gomes} Borges, V. and Gomes, J. N. \textit{Gradient estimates for $\rho$-Einstein solitons and applications}. Preprint, 2025.

    \bibitem{bryant} Bryant, R. \textit{Ricci flow solitons in dimension three with $SO(3)$-symmetries}. Preprint, 2005.

    \bibitem{brovaz0} Brozos-Vázquez, M., Garcıa-Rıo, E. and Vázquez-Lorenzo R. \textit{Some remarks on locally conformally flat static space–times}. Journal of mathematical physics, 46(2). 2005.

    \bibitem{brovaz} Brozos-Vázquez, M., Garcıa-Rıo, E. and Vázquez-Lorenzo R. \textit{Warped product metrics and locally conformally flat structures.} Matemática Contemporânea. 28(5):91-110, 2005.

    \bibitem{bdw} Buzano, M., Dancer, A., Gallaugher, M. and Wang, M. \textit{Non-Kähler expanding Ricci solitons, Einstein metrics, and exotic cone structures}. Pacific Journal of Mathematics, 273(2), pp.369-394, 2014.

    \bibitem{bdw2} Buzano, M., Dancer, A. and Wang, M. \textit{A family of steady Ricci solitons and Ricci-flat metrics}. Communications in Analysis and Geometry 23, no. 3, 2015.

    \bibitem{caoqian_0} Cao, H. D. and Chen, Q. \textit{On locally conformally flat gradient steady Ricci solitons}. Transactions of the American Mathematical Society, 364(5), 2377-2391, 2012.
    
	\bibitem{caoqian} Cao, H. D. and Chen, Q. \textit{On Bach-flat gradient shrinking Ricci solitons}. Duke Mathematical Journal 162, no. 6, 1149-1169, 2013.
	
	\bibitem{caocatino} Cao, H.D., Catino, G., Chen, Q., Mantegazza, C. and Mazzieri, L. \textit{Bach-flat gradient steady Ricci solitons}. Calculus of Variations and Partial Differential Equations 49, no. 1, 125-138, 2014.
	
	\bibitem{catino} Catino, G., Mazzieri, L. \textit{Gradient Einstein solitons}. Nonlinear Analysis 132, 66-94, 2016.
	
	\bibitem{catino1} Catino, G., Mazzieri, L., Mongodi, S. \textit{Rigidity of gradient Einstein shrinkers}. Communications in Contemporary Mathematics 17.06, 1550046, 2015.

	\bibitem{catino2} Catino, G., Cremaschi, L., Djadli, Z., Mantegazza, C., Mazzieri, L. \textit{The Ricci–Bourguignon flow}. Pacific Journal of Mathematics, v. 287, n. 2, p. 337-370, 2017.	

    \bibitem{choi} Choi, H. I. \textit{Characterizations of simply connected rotationally symmetric manifolds}. Transactions of the American Mathematical Society, 275(2), pp.723-727, 1983.
    
	\bibitem{chowluni} Chow, B., Lu, P., Ni, L. \textit{Hamilton’s Ricci flow}. American Mathematical Society, Science Press, 2023.

    \bibitem{dw} Dancer, A. and Wang, M.Y. \textit{Some New Examples of Non-Kähler Ricci Solitons}. Mathematical research letters, 16(2), pp.349-363, 2009.

    \bibitem{dw2} Dancer, A. and Wang, M.Y. \textit{On Ricci solitons of cohomogeneity one}. Annals of Global Analysis and Geometry 39, 259-292, 2011.
    
    \bibitem{derd} Derdziński, A. \textit{Self-dual Kähler manifolds and Einstein manifolds of dimension four}. Compositio Mathematica, 49(3), 405-433, 1983.
    
	\bibitem{pointofview} Eminenti, M., La Nave, G., Mantegazza, C. \textit{Ricci solitons: the equation point of view}. manuscripta mathematica, 127(3), 345-367, 2008.

    \bibitem{ferlo_0} Fernández-López, M., García-Río, E. \textit{Rigidity of shrinking Ricci solitons}. Mathematische Zeitschrift, 269:461-6, 2011.

    \bibitem{fer_gar} Fernández-López, M., García-Río, E. \textit{A note on locally conformally flat gradient Ricci solitons}. Geometriae Dedicata, 168, pp.1-7, 2014.
    
	\bibitem{ferlo} Fernández-López, M., García-Río, E. \textit{On gradient Ricci solitons with constant scalar curvature}. Proceedings of the American Mathematical Society, 144(1), pp.369-378, 2016

    \bibitem{fielder} Fiedler, B., Schimming, R. \textit{Exact solutions of the Bach field equations of general relativity}. Reports on Mathematical Physics. Feb 1;17(1):15-36, 1980.
    
	\bibitem{hamilton0} Hamilton, R. S. \textit{Three-manifolds with positive Ricci curvature}. Journal of Differential Geometry, v. 17, n. 2, p. 255-306, 1982.

	\bibitem{hamilton1} Hamilton, R. S. \textit{The Ricci Flow on Surfaces}. Contemporary Mathematics, v. 71, p. 237-361, 1988.

    \bibitem{hamilton2} Hamilton, R. S. \textit{The formations of singularities in the Ricci Flow}. Surveys in differential geometry, Vol. II (Cambridge, MA, 1993), 7--136, Int. Press, Cambridge, MA, 1995.

    \bibitem{ivey} Ivey, T. A. \textit{Local existence of Ricci solitons}. manuscripta mathematica, 91(2), pp.151-162, 1996.
    
	\bibitem{patuho} Ho, P.T. \textit{On the Ricci–Bourguignon flow}. International Journal of Mathematics, 31(06), p.2050044, 2020.

    \bibitem{muse} Munteanu, O., Sesum, N. {\it On gradient Ricci solitons}. Journal of Geometric Analysis, 23, pp.539-561, 2013.
    
	\bibitem{mi} Mi, R. \textit{Remarks on scalar curvature of gradient Ricci-Bourguignon sollitons}. Bulletin des Sciences Mathématiques, 171, 103034, 2021.
	
	\bibitem{oneill} O'Neill, B. {\it Semi-Riemannian geometry with applications to relativity.} Academic Press, 1983.
	
	\bibitem{petersen} Petersen, P., Wylie, W. \textit{Rigidity of gradient Ricci solitons}. Pacific journal of mathematics, v. 241, n. 2, p. 329-345, 2009.

    \bibitem{petersen2} Petersen P, Wylie W. \textit{On gradient Ricci solitons with symmetry}. Proceedings of the American Mathematical Society. 137(6):2085-92, 2009.

    \bibitem{swann} Swann A, Pedersen H. \textit{Einstein-Weyl geometry, the Bach tensor and conformal scalar curvature}. Journal für die reine und angewandte Mathematik, 441, pp. 99-113, 1993.

\end{thebibliography}

\end{document}